\newcommand*{\mathcolor}{}
\def\mathcolor#1#{\mathcoloraux{#1}}
\newtheorem{theorem}{Theorem}[section]
\newtheorem{lemma}[theorem]{Lemma}
\newtheorem{proposition}[theorem]{Proposition}
\newtheorem{definition}[theorem]{Definition}
\newtheorem{example}[theorem]{Example}
\newtheorem{remark}[theorem]{Remark}
\newtheorem{convention}[theorem]{Convention}
\def\v@rt#1#2{\m@th\ooalign{$\hfil#1|\hfil$\crcr$#1#2$}}
\def\captr{\mathrel{\mathpalette\v@rt\cap}}
\newcommand{\Rmnum}[1]{\expandafter\@slowromancap\romannumeral #1@}
\begin{document}

\title{Geodesic Normal Forms and Hecke Algebras for the Complex Reflection Groups $G(de,e,n)$}
\author{Georges Neaime\\
Fakultät für Mathematik\\
Universität Bielefeld\\
\texttt{gneaime@math.uni-bielefeld.de}
}

\date{October 22, 2018}

\maketitle

\begin{abstract}

We establish geodesic normal forms for the general series of complex reflection groups $G(de,e,n)$ by using the presentations of Corran-Picantin and Corran-Lee-Lee of $G(e,e,n)$ and $G(de,e,n)$ for $d > 1$, respectively. This requires the elaboration of a combinatorial technique in order to explicitly determine minimal word representatives of the elements of $G(de,e,n)$. Using these geodesic normal forms, we construct natural bases for the Hecke algebras associated with the complex reflection groups $G(e,e,n)$ and $G(d,1,n)$. As an application, we obtain a new proof of the BMR freeness conjecture for these groups.

\end{abstract}

\vspace{0.5cm}

\tableofcontents 

\newpage

\section{Introduction}\label{SectionIntroduction}

Complex reflection groups are finite groups generated by complex reflections. Recall that a complex reflection is a linear transformation of finite order that fixes a hyperplane pointwise. These groups include finite real relection groups, also known as finite Coxeter groups. It is well known that every complex reflection group is a direct product of irreducible ones. The irreducible complex reflection groups have been classified by Shephard and Todd \cite{ShephardTodd} in 1954. The classification includes the general $3$-parameter series $G(de,e,n)$ that can be easily described in terms of monomial matrices and $34$ exceptional groups denoted by $G_4$, $G_5$, $\cdots$, $G_{37}$. 

Brou\'e, Malle and Rouquier \cite{BMR} managed to attach a complex braid group to each complex reflection group. This generalizes the notion of Artin-Tits groups attached to finite Coxeter groups. Extending earlier results in \cite{BroueMalleZyklotomischeHeckealgebren}, they also managed to generalize the definition of the (Iwahori-)Hecke algebra for real reflection groups to arbitrary complex reflection groups by using their definition of the complex braid group. Actually, the Hecke algebra is defined as a quotient of the complex braid group algebra by some polynomial relations. It is believed that nice properties of these objects in the case of real reflection groups could be extended to the general case of complex reflection groups. In \cite{BMR}, it was stated a number of important conjectures about the complex braid groups and the Hecke algebras.

One of these interesting conjectures is the so-called ``BMR freeness conjecture''. It states that the Hecke algebra is a free module of rank equal to the order of the associated complex reflection group. This property is valid for the (Iwahori-)Hecke algebra attached to any finite real reflection group (see \cite{Bourbaki}), where a basis is constructed from geodesic normal forms in the finite Coxeter group due to Matsumoto's property (see \cite{MatsumotoTheorem}). The BMR freeness conjecture can be easily reduced to the case of irreducible complex reflection groups. During the past two decades, a proof of this conjecture for each case of the classification of Shephard and Todd has been established involving the results of a number of authors. As we are interested in the case of the general series of complex reflection groups, we mention that this conjecture has been established for $G(d,1,n)$ (see Ariki-Koike \cite{ArikiKoikeHecke} and Bremke-Malle \cite{BremkeMalle}) and for $G(de,e,n)$ by Ariki (see \cite{ArikiHecke} and Appendix A.2 of \cite{RostamArikiBMR}). A list of references for the proof of the BMR freeness conjecture can be found in the next section.

An important constraint in the proof of this conjecture, and in the theory of Hecke algebras for complex reflection groups in general, is the failure of an analogue of Matsumoto's property. That is, we do not have any canonical basis of the Hecke algebra. The proof of the BMR freeness conjecture was obtained by sometimes tedious and lengthy computations in order to explicitly construct bases for the Hecke algebras. It is then of importance to find nice bases for these algebras.  In this paper, we construct new and natural bases for the Hecke algebras attached to the complex reflection groups $G(e,e,n)$ and $G(d,1,n)$.

In order to establish these bases, our attention is firstly shifted to the complex reflection groups $G(de,e,n)$. We construct geodesic normal forms for these groups by using the presentations of Corran-Picantin \cite{CorranPicantin} and Corran-Lee-Lee \cite{CorranLeeLee} of $G(e,e,n)$ and $G(de,e,n)$ for $d > 1$, respectively. The geodesic normal forms are very natural and easy to describe. They generalize our construction in \cite{GeorgesNeaimeIntervals} for $G(e,e,n)$ to all the cases of the general series of complex reflection groups.

We establish that these geodesic normal forms provide bases for the Hecke algebras attached to $G(e,e,n)$ and $G(d,1,n)$. Since these bases are constructed from geodesic normal forms in the complex reflection groups, they are natural bases for the associated Hecke algebras. Note that the geodesic normal forms for $G(e,e,n)$ have been already used in our previous work \cite{GeorgesNeaimeIntervals} in order to construct intervals in $G(e,e,n)$ that give rise to nice combinatorial structures (called interval Garside structures) for the associated complex braid groups.

The article is organized as follows. In Section 2, we provide a basic background material and recall the BMR freeness conjecture. The geodesic normal forms for the complex reflection groups $G(de,e,n)$ are constructed in Section 3, which sets the stage for our later work. The techniques used are elementary and the associated combinatorial characterizations are very explicit. In Section 4, the attention shifts to the Hecke algebras. Actually, we establish nice presentations for the Hecke algebras attached to $G(de,e,n)$, by using the presentations of Corran-Picantin and Corran-Lee-Lee of the associated complex braid groups that we recall in the same section. The remaining part of the article establishes that the geodesic normal forms constructed in Section 3 provide natural bases for the Hecke algebras associated with $G(e,e,n)$ and $G(d,1,n)$. Through these constructions, we also obtain a new proof of the BMR freeness conjecture for these groups. At the end of Section 5, we explain why we were not able to establish such natural bases for the Hecke algebras attached to the other cases of the general series of complex reflection groups.

\section{Definitions and preliminaries}

Let $W$ be a complex reflection group. Let $\mathcal{A} := \{Ker(s-1)$ s.t. $s \in \mathcal{R} \}$ be the hyperplane arrangement and $X := \mathbb{C}^n\setminus\bigcup\mathcal{A}$ be the hyperplane complement. The complex reflection group $W$ acts naturally on $X$. Let $X/W$ be its space of orbits. The complex braid group $B$ attached to $W$ is defined as follows. For details about this definition, we refer to \cite{BMR}.
\begin{definition}
The complex braid group attached to $W$ is the fundamental group  $$B:=\pi_1(X/W).$$
\end{definition}


Recall that a complex reflection $s \in W$ is called distinguished if its only nontrivial eigenvalue is $exp(2i\pi / o(s))$, where $o(s)$ denotes the order of $s$. For the standard notion of braided reflections that we use in the next definition, the reader may check \cite{BMR}. We are ready to define the Hecke algebra associated with $W$ (see \cite{BMR} and \cite{MarinG20G21}).

\begin{definition}\label{HeckeBMR}
Let $R=\mathbb{Z}[a_{s,i},a_{s,0}^{-1}, 0 \leq i \leq o(s)-1]$, where s runs over the distinguished reflections, with the convention $a_{s,i} = a_{s',i}$ if $s$ and $s'$ are conjugates in $W$. The Hecke algebra $H(W)$ attached to the complex reflection group $W$ is the quotient of the complex braid group algebra $RB$ by the relations $$\sigma^{o(s)} -  a_{s, o(s)-1}\sigma^{o(s)-1} - \cdots - a_{s,0} = 0,$$ for each braided reflection $\sigma$ associated with $s$. 
\end{definition}

Note that it is enough to choose one such relation per conjugacy class of distinguished reflections, as all the corresponding braided reflections are conjugates in $B$ (see \cite{BMR}).\\

The BMR freeness conjecture proposed by Brou\'e, Malle and Rouquier \cite{BMR} in 1998 states that the Hecke algebra $H(W)$ attached to $W$ is a free $R$-module of rank equal to the order of $W$. After two decades, the BMR freeness conjecture is proven through the results of a number of authors. Thus, we have the following theorem.

\begin{theorem}\label{PropositionBMRFreenessTheorem}

The Hecke algebra $H(W)$ is a free $R$-module of rank $|W|$.

\end{theorem}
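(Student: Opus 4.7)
The plan is to reduce to the case of irreducible $W$ and then invoke the Shephard-Todd classification, treating the infinite series $G(de,e,n)$ and the $34$ exceptional groups separately. This is essentially a citation-based proof: the theorem is the combined outcome of two decades of work, so the task is to organize the reduction cleanly and then point at the right result in each case.

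First I would verify that the Hecke algebra construction respects direct products. If $W = W_1 \times W_2$, then the hyperplane arrangement splits as a disjoint union, so the orbit space $X/W$ is homotopy-equivalent to a product and the complex braid group factors as $B(W) \cong B(W_1) \times B(W_2)$. The distinguished reflections of $W$ are precisely the distinguished reflections of each factor (acting trivially on the other), so the defining polynomial relations of Definition \ref{HeckeBMR} decouple along the two factors, yielding an isomorphism $H(W) \cong H(W_1) \otimes_R H(W_2)$. Freeness and the multiplicativity $|W| = |W_1| \cdot |W_2|$ are preserved under tensor product over $R$, so it suffices to assume $W$ irreducible.

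By the Shephard-Todd classification recalled in Section \ref{SectionIntroduction}, $W$ is then either a member of the infinite family $G(de,e,n)$ or one of the $34$ exceptional groups $G_4,\ldots,G_{37}$. For the infinite family, I would cite Ariki-Koike \cite{ArikiKoikeHecke} together with the rank computation of Bremke-Malle \cite{BremkeMalle} for $G(d,1,n)$, and Ariki \cite{ArikiHecke} together with the completion by Rostam in Appendix A.2 of \cite{RostamArikiBMR} for the general case $G(de,e,n)$. (For $G(e,e,n)$ and $G(d,1,n)$ one may alternatively invoke the new geodesic bases constructed in later sections of the present paper.) The genuine real reflection groups among the exceptionals, namely $G_{23}=H_3$, $G_{28}=F_4$, $G_{30}=H_4$, $G_{35}=E_6$, $G_{36}=E_7$, $G_{37}=E_8$, are covered by Bourbaki \cite{Bourbaki} via Matsumoto's property, which furnishes a canonical basis indexed by elements of the Coxeter group. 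The remaining non-real exceptional groups are treated one at a time, with references collected in the next section of the paper.

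The main obstacle is precisely this last step: for the exceptional groups there is no uniform combinatorial theory analogous to Matsumoto's property. For each such $W$ one has to produce an explicit $R$-spanning set of $H(W)$ of cardinality at most $|W|$, typically by closing up a word-length induction under the polynomial relations, and then establish linear independence, usually by specializing the parameters to recover the group algebra $RW$ (where $|W|$ linearly independent elements are manifestly available) or by exhibiting a faithful representation of the correct dimension. The need to do this case by case, sometimes with heavy computer assistance, is exactly what motivates the search for the natural bases constructed in the remainder of this article.
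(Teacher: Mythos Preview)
Your proposal is correct and mirrors the paper's own treatment: the paper likewise reduces to the irreducible case and then assembles the result from the literature, citing Geck--Pfeiffer (rather than Bourbaki) for finite Coxeter groups, Ariki--Koike and Ariki (with the Rostam appendix) for the infinite series, and the works of Marin, Marin--Pfeiffer, Chavli, and Tsuchioka for the exceptional groups. The only minor discrepancy is that the exceptional-group references appear in the same section, immediately after the theorem, rather than in ``the next section''.
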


The BMR freeness conjecture can be easily reduced to the case where $W$ is irreducible. It is true for the (Iwahori-)Hecke algebra attached to any finite Coxeter group (see Lemma 4.4.3 of \cite{GeckPfeifferBook}). Ariki and Koike \cite{ArikiKoikeHecke} proved it for the case of $G(d,1,n)$. Note that a basis for the Hecke algebra associated with $G(d,1,n)$ is also given in \cite{BremkeMalle}. Ariki defined in \cite{ArikiHecke} a Hecke algebra for $G(de,e,n)$ by a presentation with generators and relations. He also proved that it is a free module of rank $|G(de,e,n)|$. The Hecke algebra defined by Ariki is isomorphic to the Hecke algebra defined by Brou\'e, Malle, and Rouquier in \cite{BMR} for $G(de,e,n)$. The details why this is true can be found in Appendix A.2 of \cite{RostamArikiBMR}. Hence one gets a proof of Theorem \ref{PropositionBMRFreenessTheorem} for the general series of complex reflection groups.

Concerning the exceptional complex reflection groups, Marin proved the conjecture for $G_4$, $G_{25}$, $G_{26}$, and $G_{32}$ in \cite{MarinCubicHeckeAlgebra5Strands} and \cite{MarinBMRFreenessConjecture}. Marin and Pfeiffer proved it for $G_{12}$, $G_{22}$, $G_{24}$, $G_{27}$, $G_{29}$, $G_{31}$, $G_{33}$, and $G_{34}$ in \cite{MarinPfeifferBMRFreeness}. In her PhD thesis and in the article that followed (see \cite{ChavliThesis} and \cite{ChavliArticlePhD}), Chavli proved the validity of this conjecture for $G_{5}$, $G_{6}$, $\cdots$, $G_{16}$. Recently, Marin proved the conjecture for $G_{20}$ and $G_{21}$ (see \cite{MarinG20G21}) and finally Tsushioka for $G_{17}$, $G_{18}$ and $G_{19}$ (see \cite{TsuchiokaBMRfreenessConjecture}). Hence we obtain a proof of Theorem \ref{PropositionBMRFreenessTheorem} for all the cases of irreducible complex reflection groups.

\section{Geodesic normal forms for \texorpdfstring{$G(de,e,n)$}{TEXT}}\label{SectionReducedWordsG(de,e,n)}

This section sets the stage for our later work by establishing a set of geodesic normal forms for the complex reflection groups $G(de,e,n)$, using the generating sets introduced by Corran-Picantin \cite{CorranPicantin} and Corran-Lee-Lee \cite{CorranLeeLee} for $G(e,e,n)$ and $G(de,e,n)$ for $d > 1$, respectively. The case of $G(e,e,n)$ has been already done in our previous work (see Section 3 of \cite{GeorgesNeaimeIntervals}). We generalize the combinatorial techniques used there to the case of $G(de,e,n)$ for $d > 1$. We obtain natural and explicit geodesic normal forms for these groups.

\subsection{Presentations for \texorpdfstring{$G(de,e,n)$}{TEXT}}

The complex reflection group $G(de,e,n)$ is the group of monomial matrices whose nonzero entries are $de$-th roots of unity and their product is a $d$-th root of unity.

Set $d=1$ and let $e \geq 1$ and $n \geq 2$. Corran-Picantin discovered in \cite{CorranPicantin} a presentation of the complex reflection group $G(e,e,n)$ that is defined as follows.

\begin{definition}\label{DefPresCorranPicantinG(e,e,n)}

The complex reflection group $G(e,e,n)$ is defined by a presentation with generators: $\{\textbf{t}_i \ |\ i \in \mathbb{Z}/e\mathbb{Z}\} \cup \{\textbf{s}_3, \textbf{s}_4, \cdots, \textbf{s}_n \}$ and relations:

\begin{enumerate}

\item $\textbf{t}_i \textbf{t}_{i-1} = \textbf{t}_j \textbf{t}_{j-1}$ for $i, j \in \mathbb{Z}/e\mathbb{Z}$,
\item $\textbf{t}_i \textbf{s}_3 \textbf{t}_i = \textbf{s}_3 \textbf{t}_i \textbf{s}_3$ for $i \in \mathbb{Z}/e\mathbb{Z}$,
\item $\textbf{s}_j \textbf{t}_i = \textbf{t}_i \textbf{s}_j$ for $i \in \mathbb{Z}/e\mathbb{Z}$ and $4 \leq j \leq n$,
\item $\textbf{s}_i \textbf{s}_{i+1} \textbf{s}_i = \textbf{s}_{i+1} \textbf{s}_i \textbf{s}_{i+1}$ for $3 \leq i \leq n-1$, 
\item $\textbf{s}_i \textbf{s}_j = \textbf{s}_j \textbf{s}_i$ for $|i-j| > 1$,
\item $\textbf{t}_i^2=1$ for $i \in \mathbb{Z}/e\mathbb{Z}$ and $\textbf{s}_j^2=1$ for $3 \leq j \leq n$.

\end{enumerate}

\end{definition}

The matrices in $G(e,e,n)$ that correspond to the generators of this presentation are given by  $\mathbf{t}_i \longmapsto t_i:= \begin{pmatrix}

0 & \zeta_{e}^{-i} & 0\\
\zeta_{e}^{i} & 0 & 0\\
0 & 0 & I_{n-2}\\

\end{pmatrix}$ \mbox{for $0 \leq i \leq e-1$}, and $\mathbf{s}_j \longmapsto s_j:= \begin{pmatrix}

I_{j-2} & 0 & 0 & 0\\
0 & 0 & 1 & 0\\
0 & 1 & 0 & 0\\
0 & 0 & 0 & I_{n-j}\\

\end{pmatrix}$ for $3 \leq j \leq n$. To avoid confusion, we use regular letters for matrices and bold letters for words over the generating set of the presentation of Corran-Picantin.\\

This presentation can be described by the following diagram. It is interpreted, where possible, as a Coxeter diagram. In addition, the dashed circle describes Relation $1$ of Definition \ref{DefPresCorranPicantinG(e,e,n)}.

\begin{figure}[H]
\begin{center}
\begin{tikzpicture}[yscale=0.8,xscale=1,rotate=30]

\draw[thick,dashed] (0,0) ellipse (2cm and 1cm);

\node[draw, shape=circle, fill=white, label=above:$\mathbf{t}_0$] (t0) at (0,-1) {\begin{tiny} 2 \end{tiny}};
\node[draw, shape=circle, fill=white, label=above:$\mathbf{t}_1$] (t1) at (1,-0.8) {\begin{tiny} 2 \end{tiny}};
\node[draw, shape=circle, fill=white, label=right:$\mathbf{t}_2$] (t2) at (2,0) {\begin{tiny} 2 \end{tiny}};
\node[draw, shape=circle, fill=white, label=above:$\mathbf{t}_i$] (ti) at (0,1) {\begin{tiny} 2 \end{tiny}};
\node[draw, shape=circle, fill=white, label=above:$\mathbf{t}_{e-1}$] (te-1) at (-1,-0.8) {\begin{tiny} 2 \end{tiny}};

\draw[thick,-] (0,-2) arc (-180:-90:3);

\node[draw, shape=circle, fill=white, label=below left:$\mathbf{s}_3$] (s3) at (0,-2) {\begin{tiny} 2 \end{tiny}};

\draw[thick,-] (t0) to (s3);
\draw[thick,-,bend left] (t1) to (s3);
\draw[thick,-,bend left] (t2) to (s3);
\draw[thick,-,bend left] (s3) to (te-1);

\node[draw, shape=circle, fill=white, label=below:$\mathbf{s}_4$] (s4) at (0.15,-3) {\begin{tiny} 2 \end{tiny}};
\node[draw, shape=circle, fill=white, label=below:$\mathbf{s}_{n-1}$] (sn-1) at (2.2,-4.9) {\begin{tiny} 2 \end{tiny}};
\node[draw, shape=circle, fill=white, label=right:$\mathbf{s}_{n}$] (sn) at (3,-5) {\begin{tiny} 2 \end{tiny}};

\node[fill=white] () at (1,-4.285) {$\cdots$};
\end{tikzpicture}
\end{center}
\caption{Diagram for the presentation of Corran-Picantin of $G(e,e,n)$.}\label{FigureDiagramCPG(e,e,n)}
\end{figure}
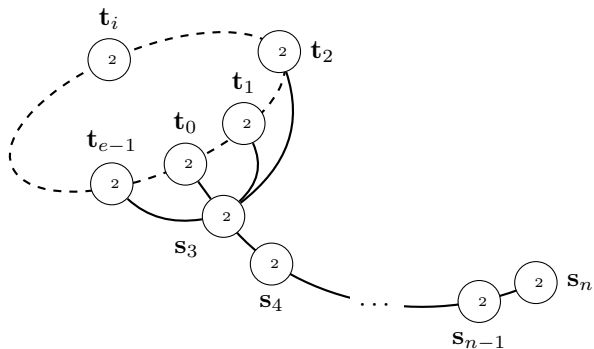

\begin{remark}

\begin{enumerate}

\item For $e=1$ and $n \geq 2$, we obtain the classical presentation of the symmetric group $S_n$.
\item For $e=2$ and $n \geq 2$, we obtain the classical presentation of the Coxeter group of type $D_n$.

\end{enumerate}

\end{remark}

The attention shifts now to the case $d > 1$. Let $d> 1$, $e \geq 1$ and $n \geq 2$. There exists a presentation of the complex reflection group $G(de,e,n)$ discovered by Corran-Lee-Lee in \cite{CorranLeeLee}.

\begin{definition}\label{DefCorranLeePresG(de,e,n)}

The complex reflection group $G(de,e,n)$ is defined by a presentation with set of generators: $\mathbf{X} = \{ \mathbf{z}\} \cup \{ \mathbf{t}_i \ | \ i \in \mathbb{Z}/de\mathbb{Z}\} \cup \{\mathbf{s}_3, \mathbf{s}_4, \cdots, \mathbf{s}_n \}$ and relations as follows.

\begin{enumerate}

\item $\mathbf{z} \mathbf{t}_i = \mathbf{t}_{i-e} \mathbf{z}$ for $i \in \mathbb{Z}/de\mathbb{Z}$,
\item $\mathbf{z}  \mathbf{s}_j=\mathbf{s}_j \mathbf{z}$ for $3 \leq j \leq n$,
\item $\mathbf{t}_i \mathbf{t}_{i-1} = \mathbf{t}_j \mathbf{t}_{j-1}$ for $i, j \in \mathbb{Z}/de\mathbb{Z}$,
\item $\mathbf{t}_i \mathbf{s}_3 \mathbf{t}_i = \mathbf{s}_3 \mathbf{t}_i \mathbf{s}_3$ for $i \in \mathbb{Z}/de\mathbb{Z}$,
\item $\mathbf{s}_j \mathbf{t}_i = \mathbf{t}_i \mathbf{s}_j$ for $i \in \mathbb{Z}/de\mathbb{Z}$ and $4 \leq j \leq n$,
\item $\mathbf{s}_i \mathbf{s}_{i+1} \mathbf{s}_i = \mathbf{s}_{i+1} \mathbf{s}_i \mathbf{s}_{i+1}$ for $3 \leq i \leq n-1$,
\item $\mathbf{s}_i \mathbf{s}_j = \mathbf{s}_j \mathbf{s}_i$ for $|i-j| > 1$, 
\item $\mathbf{z}^d=1$, $\mathbf{t}_i^2=1$ for $i \in \mathbb{Z}/de\mathbb{Z}$, and $\mathbf{s}_j^2=1$ for $3 \leq j \leq n$.

\end{enumerate}

\end{definition}

The generators of this presentation correspond to the following $n \times n$ matrices.
The generator $\mathbf{t}_i$ is represented by the matrix $t_i =
\begin{pmatrix}

0 & \zeta_{de}^{-i} & 0\\
\zeta_{de}^{i} & 0 & 0\\
0 & 0 & I_{n-2}\\

\end{pmatrix}$
for $i \in \mathbb{Z}/de\mathbb{Z}$, $\mathbf{z}$ by the diagonal matrix  $z = Diag(\zeta_d,1,\cdots,1)$ where 
$\zeta_d = exp(2i \pi / d)$, 
and $\mathbf{s}_j$ by the transposition matrix $s_j = (j-1,j)$ for $3 \leq j \leq n$. To avoid confusion, we use regular letters for matrices and bold letters for words over $\mathbf{X}$. Denote by $X$ the set $\{z,t_0,t_1, \cdots, t_{de-1},s_3, \cdots, s_n\}$.\\

This presentation can be described by the following diagram. This diagram is again to be read, where possible, as a Coxeter diagram. In addition, the dashed circle describes Relation $3$ of Definition \ref{DefCorranLeePresG(de,e,n)} and the curved arrow above the node corresponding to $\mathbf{z}$ describes Relation $1$.

\begin{figure}[H]
\begin{center}
\begin{tikzpicture}[yscale=0.8,xscale=1,rotate=30]

\draw[thick,dashed] (0,0) ellipse (2cm and 1cm);

\node[draw, shape=circle, fill=white, label=above:\begin{small}$\mathbf{z}$\end{small}] (z) at (0,0) {\begin{tiny}$d$\end{tiny}};
\node[draw, shape=circle, fill=white, label=above:\begin{small}$\mathbf{t}_0$\end{small}] (t0) at (0,-1) {\begin{tiny}$2$\end{tiny}};
\node[draw, shape=circle, fill=white, label=above:\begin{small}$\mathbf{t}_1$\end{small}] (t1) at (1,-0.8) {\begin{tiny}$2$\end{tiny}};
\node[draw, shape=circle, fill=white, label=right:\begin{small}$\mathbf{t}_2$\end{small}] (t2) at (2,0) {\begin{tiny}$2$\end{tiny}};
\node[draw, shape=circle, fill=white, label=above:$\mathbf{t}_i$] (ti) at (0,1) {\begin{tiny}$2$\end{tiny}};
\node[draw, shape=circle, fill=white, label=above:\begin{small}$\mathbf{t}_{de-1}$\end{small}] (te-1) at (-1,-0.8) {\begin{tiny}$2$\end{tiny}};

\draw[->,bend left=45] (-0.3,0.26) to (0.3,0.26);
\draw[thick,-] (0,-2) arc (-180:-90:3);

\node[draw, shape=circle, fill=white, label=below left:$\mathbf{s}_3$] (s3) at (0,-2) {\begin{tiny}$2$\end{tiny}};

\draw[thick,-] (t0) to (s3);
\draw[thick,-,bend left] (t1) to (s3);
\draw[thick,-,bend left] (t2) to (s3);
\draw[thick,-,bend left] (s3) to (te-1);

\node[draw, shape=circle, fill=white, label=below:$\mathbf{s}_4$] (s4) at (0.15,-3) {\begin{tiny}$2$\end{tiny}};
\node[draw, shape=circle, fill=white, label=below:$\mathbf{s}_{n-1}$] (sn-1) at (2.2,-4.9) {\begin{tiny}$2$\end{tiny}};
\node[draw, shape=circle, fill=white, label=right:$\mathbf{s}_{n}$] (sn) at (3,-5) {\begin{tiny}$2$\end{tiny}};

\node[fill=white] () at (1,-4.285) {$\cdots$};

\end{tikzpicture}
\end{center}
\caption{\mbox{Diagram for the presentation of Corran-Lee-Lee of $G(de,e,n)$.}}\label{FigPresG(de,e,n)CorranLeeLee}
\end{figure}

\begin{proposition}\label{PropPresB(d,1,n)}

Let $e=1$. The presentation given in Definition \ref{DefCorranLeePresG(de,e,n)} is equivalent to the classical presentation of the complex reflection group $G(d,1,n)$ that can be described by the following well-known diagram.

\begin{figure}[H]
\begin{small}
\begin{center}
\begin{tikzpicture}

\node[draw, shape=circle, label=above:$\mathbf{z}$] (1) at (0,0) {$d$};
\node[draw, shape=circle,label=above:$\mathbf{s}_2$] (2) at (2,0) {$2$};
\node[draw, shape=circle,label=above:$\mathbf{s}_3$] (3) at (4,0) {$2$};
\node[draw, shape=circle,label=above:$\mathbf{s}_{n-1}$] (n-1) at (6,0) {$2$};
\node[draw,shape=circle,label=above:$\mathbf{s}_n$] (n) at (8,0) {$2$};

\draw[thick,-,double] (1) to (2);
\draw[thick,-] (2) to (3);
\draw[dashed,-,thick] (3) to (n-1);
\draw[thick,-] (n-1) to (n);

\end{tikzpicture}
\end{center}
\end{small}

\caption{Diagram for the presentation of $G(d,1,n)$.}\label{DiagPresofBMRofGd1n}
\end{figure}
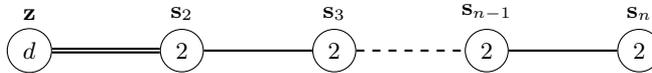

\end{proposition}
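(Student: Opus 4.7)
The plan is to reduce the Corran-Lee-Lee presentation of Definition \ref{DefCorranLeePresG(de,e,n)} (with $e=1$, so $de=d$) to the classical presentation of $G(d,1,n)$ by a sequence of Tietze transformations. The crucial observation is that when $e=1$, Relation 1 reads $\mathbf{z}\mathbf{t}_i = \mathbf{t}_{i-1}\mathbf{z}$, so all $\mathbf{t}_i$'s are conjugates of $\mathbf{t}_0$ by powers of $\mathbf{z}$. Thus the generators $\mathbf{t}_1,\ldots,\mathbf{t}_{d-1}$ are redundant.

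First, I would iterate Relation 1 to derive $\mathbf{t}_i = \mathbf{z}^{-i}\mathbf{t}_0\mathbf{z}^i$ for every $i \in \mathbb{Z}/d\mathbb{Z}$, use these identities as Tietze transformations to eliminate $\mathbf{t}_1,\ldots,\mathbf{t}_{d-1}$, and rename $\mathbf{t}_0$ as $\mathbf{s}_2$. The resulting presentation has generators $\mathbf{z},\mathbf{s}_2,\mathbf{s}_3,\ldots,\mathbf{s}_n$, matching the classical diagram of Figure \ref{DiagPresofBMRofGd1n}. It remains to rewrite each of Relations 2--8 after the substitution. Relations 5, 6, 7 translate directly into the commutation and braid relations among the $\mathbf{s}_j$'s. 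Relation 2 becomes commutation of $\mathbf{z}$ with each $\mathbf{s}_j$ for $j\geq 3$. Relation 4 with $i=0$ yields the braid relation $\mathbf{s}_2\mathbf{s}_3\mathbf{s}_2=\mathbf{s}_3\mathbf{s}_2\mathbf{s}_3$; the instances $i\neq 0$ follow by conjugating this equality by $\mathbf{z}^i$ and using $\mathbf{z}\mathbf{s}_3=\mathbf{s}_3\mathbf{z}$, and thus add nothing new. Relation 8 gives $\mathbf{z}^d=1$, $\mathbf{s}_2^2=1$ and $\mathbf{s}_j^2=1$ for $j\geq 3$ (the higher $\mathbf{t}_i^2=1$ again being redundant by conjugation).

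The decisive step is Relation 3. Applied to the indices $(i,j)=(1,0)$ and using $\mathbf{t}_1=\mathbf{z}^{-1}\mathbf{s}_2\mathbf{z}$, a short calculation reduces $\mathbf{t}_1\mathbf{t}_0=\mathbf{t}_0\mathbf{t}_{-1}$ to the order-four relation $\mathbf{z}\mathbf{s}_2\mathbf{z}\mathbf{s}_2=\mathbf{s}_2\mathbf{z}\mathbf{s}_2\mathbf{z}$, which is exactly the relation encoded by the double edge between $\mathbf{z}$ and $\mathbf{s}_2$ in Figure \ref{DiagPresofBMRofGd1n}. All other instances $\mathbf{t}_i\mathbf{t}_{i-1}=\mathbf{t}_j\mathbf{t}_{j-1}$ follow by transitivity from the consecutive cases, and each consecutive case is obtained from $(1,0)$ by conjugating by $\mathbf{z}^{-i}$. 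Conversely, starting from the classical presentation and defining $\mathbf{t}_i:=\mathbf{z}^{-i}\mathbf{s}_2\mathbf{z}^i$, one checks the eight Corran-Lee-Lee relations directly, the only nontrivial verification again being Relation 3, which is implied by the double-bond relation together with $\mathbf{z}^d=1$. The only real obstacle is the careful bookkeeping of indices modulo $d$, in particular ensuring that the cyclic closure $\mathbf{z}\mathbf{t}_0=\mathbf{t}_{d-1}\mathbf{z}$ is consistent with $\mathbf{t}_{d-1}=\mathbf{z}\mathbf{t}_0\mathbf{z}^{-1}$; this is precisely where the relation $\mathbf{z}^d=1$ intervenes.
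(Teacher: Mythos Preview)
Your proposal is correct and follows essentially the same approach as the paper: use Relation~1 (with $e=1$) to express each $\mathbf{t}_i$ as $\mathbf{z}^{-i}\mathbf{t}_0\mathbf{z}^{i}$, eliminate the redundant generators $\mathbf{t}_1,\ldots,\mathbf{t}_{d-1}$ via Tietze transformations, rename $\mathbf{t}_0$ as $\mathbf{s}_2$, and observe that the rewritten relations collapse to the classical ones. Your write-up is in fact more explicit than the paper's sketch, in particular your verification that Relation~3 yields the double-bond relation $\mathbf{z}\mathbf{s}_2\mathbf{z}\mathbf{s}_2=\mathbf{s}_2\mathbf{z}\mathbf{s}_2\mathbf{z}$ and your treatment of the converse direction, which the paper omits.
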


\begin{proof}

Let $e=1$. Relation $1$ of Definition \ref{DefCorranLeePresG(de,e,n)} becomes $\mathbf{z} \mathbf{t}_1=\mathbf{t}_0 \mathbf{z}$, that is $\mathbf{t}_1 = \mathbf{z}^{-1} \mathbf{t}_0 \mathbf{z}$. Also by Relation $3$ of  Definition \ref{DefCorranLeePresG(de,e,n)}, we have $\mathbf{t}_k=\mathbf{z}^{-k}\mathbf{t}_0\mathbf{z}^k$ for $1 \leq k \leq d-1$. If we remove $\mathbf{t}_1, \cdots, \mathbf{t}_{d-1}$ from the set of generators and replace every occurrence of $\mathbf{t}_k$ in the defining relations with $\mathbf{z}^{-k} \mathbf{t}_0 \mathbf{z}^{k}$ for $1 \leq k \leq d-1$, we recover the classical presentation of the complex reflection group $G(d,1,n)$. Note that we replace $\mathbf{t}_0$ by $\mathbf{s}_2$ in the set of generators of this presentation.

\end{proof}

\begin{remark}

For $d = 2$, the presentation described by the diagram of Figure \ref{DiagPresofBMRofGd1n} corresponds to the presentation of the Coxeter group of type $B_n$. 

\end{remark}

From now on, we set the following convention.

\begin{convention}\label{ConventionDecreaseIncreaseIndex}

A decreasing-index expression of the form $\mathbf{s}_i \mathbf{s}_{i-1} \cdots \mathbf{s}_{i'}$ is the empty word when $i < i'$ and an increasing-index expression of the form $\mathbf{s}_i \mathbf{s}_{i+1} \cdots \mathbf{s}_{i'}$ is the empty word when $i > i'$. Similarly, in $G(de,e,n)$, a decreasing-index product of the form $s_i s_{i-1} \cdots s_{i'}$ is equal to $I_n$ when $i < i'$ and an increasing-index product of the form $s_i s_{i+1} \cdots s_{i'}$ is equal to $I_n$ when $i > i'$, where $I_n$ is the identity $n \times n$ matrix. We also set that $\mathbf{z}^0$ is the empty word.

\end{convention}

\subsection{Minimal word representatives}

Consider the complex reflection group $G(de,e,n)$ with $d > 1$, $e > 1$ and $n \geq 2$. The case of $G(d,1,n)$ will be established in the next subsection. Recall that $\mathbf{X}$ denotes the set of the generators of the presentation of Corran-Lee-Lee of $G(de,e,n)$ and $X$ the set of the corresponding matrices. Our aim is to represent each element of $G(de,e,n)$ by a reduced word over $\mathbf{X}$. This requires the elaboration of a combinatorial technique in order to determine a reduced expression decomposition over $\mathbf{X}$ of an element of $G(de,e,n)$.\\

We introduce Algorithm \ref{Algo2} below (see page 8) that produces a word $R\!E(w)$ over $\mathbf{X}$ for a given element $w$ in $G(de,e,n)$. This algorithm generalizes the one introduced in Section 3 of our previous work \cite{GeorgesNeaimeIntervals} that corresponds to the case of $G(e,e,n)$. Note that we use Convention \ref{ConventionDecreaseIncreaseIndex} in the elaboration of the algorithm. Later on, we prove that its output $R\!E(w)$ is a reduced expression over $\mathbf{X}$ of $w \in G(de,e,n)$.\\

Let $w_n := w \in G(de,e,n)$. For $i$ from $n$ to $2$, the $i$-th step of \mbox{Algorithm\! \ref{Algo2}} transforms the block diagonal matrix $\left(
\begin{array}{c|c}
w_i & 0 \\
\hline
0 & I_{n-i}
\end{array}
\right)$ into a block diagonal matrix $\left(
\begin{array}{c|c}
w_{i-1} & 0 \\
\hline
0 & I_{n-i+1}
\end{array}
\right) \in G(de,e,n)$. Actually, for $2 \leq i \leq n$, there exists a unique $c$ with $1 \leq c \leq n$ such that $w_i[i,c] \neq 0$. At each step $i$ of Algorithm\! \ref{Algo2}, if $w_i[i,c] =1$, we shift it into the diagonal position $[i,i]$ by right multiplication by transpositions. If $w_i[i,c] \neq 1$, we shift it into the first column by right multiplication by transpositions, transform it into $1$ by right multiplication by an element of $\{t_0, t_1, \cdots, t_{de-1} \}$, and then shift the $1$ obtained into the diagonal position $[i,i]$. Finally, we get $w_1 = \zeta_d^k$ for some $0 \leq k \leq d-1$, where $\zeta_d^k$ is equal to the product of the nonzero entries of $w$. By multiplying $\left(
\begin{array}{c|c}
w_{1} & 0 \\
\hline
0 & I_{n-1}
\end{array}
\right)$ on the right by $z^{-k}$, we get the identity matrix $I_n$.

\begin{algorithm}

\SetKwInOut{Input}{Input}\SetKwInOut{Output}{Output}

\noindent\rule{12cm}{0.5pt}

\Input{$w$, a matrix in $G(de,e,n)$ with $d > 1$, $e > 1$, and $n \geq 2$.}
\Output{$R\!E(w)$, a word over $\mathbf{X}$.}

\noindent\rule{12cm}{0.5pt}

\textbf{Local variables}: $w'$, $R\!E(w)$,  $i$, $U$, $V$, $c$, $k$.

\noindent\rule{12cm}{0.5pt}

\textbf{Initialisation}:
$U:=[1,\zeta_{de},\zeta_{de}^2,...,\zeta_{de}^{e-1}]$, $V:=[1,\zeta_d,\zeta_d^2, \cdots, \zeta_d^{d-1}]$, $s_2 := t_0$, $\mathbf{s}_2 := \mathbf{t}_0$, $R\!E(w) := \varepsilon$: the empty word, $w' := w$.

\noindent\rule{12cm}{0.5pt}

\For{$i$ \textbf{from} $n$ \textbf{down to} $2$} {
	$c:=1$; $k:=0$; \\
	\While{$w'[i,c] = 0$}{$c :=c+1$\; 
	}
	 \textit{\#Then $w'[i,c]$ is the root of unity on the row $i$}\;
	\While{$U[k+1]\neq w'[i,c] $}{$k :=k+1$\;
	}
	\textit{\#Then $w'[i,c] = \zeta_{de}^k$.}\\

		\If{$k \neq 0$}{
		$w' := w' s_{c} s_{c-1} \cdots s_{3} s_{2} t_{k}$; \textit{\#Then $w'[i,2] =1$}\;
		$R\!E(w) := \mathbf{t}_k \mathbf{s}_2 \mathbf{s}_3 \cdots \mathbf{s}_c R\!E(w)$\;
		$c:=2$\;
	}	
	$w' := w' s_{c+1} \cdots s_{i-1}s_{i}$; \textit{\#Then $w'[i,i] = 1$}\;
	$R\!E(w) := \mathbf{s}_i \mathbf{s}_{i-1} \cdots \mathbf{s}_{c+1} R\!E(w)$\;
}
$k:=0$\;
\While {$V[k+1] \neq w'[1,1]$}{$k:=k+1$\;}
\textit{\#Then $w'[1,1] = \zeta_d^k$}\;
$w':=w' z^{-k}$; \textit{\#Then $w'=I_n$}\;
\If{$k \neq 0$}{$R\!E(w) = \mathbf{z}^{k} R\!E(w)$\;}
\textbf{Return} $R\!E(w)$;

\noindent\rule{12cm}{0.5pt}

\caption{A word over $\mathbf{X}$ corresponding to a matrix $w \in G(de,e,n)$.}\label{Algo2}
\end{algorithm}

\begin{example}\label{ExampleAlgo1}

We apply Algorithm\! \ref{Algo2} to $w := \begin{pmatrix}

\zeta_9 & 0 & 0 & 0\\
0 & 0 & 1 & 0\\
0 & 0 & 0 & \zeta_9\\
0 & \zeta_9 & 0 & 0\\

\end{pmatrix}$ $\in G(9,3,4)$.\\
Step $1$ $(i=4, k=0, c=1)$: $w':= w s_2=\begin{pmatrix}

0 & \zeta_9 & 0 & 0\\
0 & 0 & 1 & 0\\
0 & 0 & 0 & \zeta_9\\
\zeta_9 & 0 & 0 & 0\\

\end{pmatrix}$, then $w':= w' t_1 = \begin{pmatrix}

\zeta_9^2 & 0 & 0 & 0\\
0 & 0 & 1 & 0\\
0 & 0 & 0 & \zeta_9\\
0 & 1 & 0 & 0\\

\end{pmatrix}$, then $w':= w' s_3 s_4 = \begin{pmatrix}

\zeta_9^2 & 0 & 0 & 0\\
0 & 1 & 0 & 0\\
0 & 0 & \boxed{\zeta_9} & 0\\
0 & 0 & 0 & \mathbf{1}\\

\end{pmatrix}$.\\
Step $2$ $(i=3, k=1, c=3)$: $w' := w' s_3 s_2 = \begin{pmatrix}

0 & \zeta_9^2 & 0 & 0\\
0 & 0 & 1 & 0\\
\zeta_9 & 0 & 0 & 0\\
0 & 0 & 0 & 1\\

\end{pmatrix}$, then $w' := w' t_1 = \begin{pmatrix}

\zeta_9^{3} & 0 & 0 & 0\\
0 & 0 & 1 & 0\\
0 & 1 & 0 & 0\\
0 & 0 & 0 & 1\\

\end{pmatrix}$, then $w':= w' s_3 = \begin{pmatrix}

\zeta_9^3 & 0 & 0 & 0\\
0 & \boxed{1} & 0 & 0\\
0 & 0 & \mathbf{1} & 0\\
0 & 0 & 0 & 1\\

\end{pmatrix}$.\\
Step $3$ $(i=2, k=0, c=2)$: $w' = \begin{pmatrix}

\boxed{\zeta_9^3} & 0 & 0 & 0\\
0 & \mathbf{1} & 0 & 0\\
0 & 0 & 1 & 0\\
0 & 0 & 0 & 1\\

\end{pmatrix} = \begin{pmatrix}

\boxed{\zeta_3} & 0 & 0 & 0\\
0 & \mathbf{1} & 0 & 0\\
0 & 0 & 1 & 0\\
0 & 0 & 0 & 1\\

\end{pmatrix}$.\\
Step $4$ $(k=1)$: $w' := w' z^{-1} = I_4$.\\
Hence $R\!E(w) = \mathbf{z} \mathbf{s}_3 \mathbf{t}_1 \mathbf{s}_2 \mathbf{s}_3 \mathbf{s}_4 \mathbf{s}_3 \mathbf{t}_1 \mathbf{s}_2 = \mathbf{z} \mathbf{s}_3 \mathbf{t}_1 \mathbf{t}_0 \mathbf{s}_3 \mathbf{s}_4 \mathbf{s}_3 \mathbf{t}_1 \mathbf{t}_0$ \emph{(since $\mathbf{s}_2 = \mathbf{t}_0$)}.

\end{example}

The next lemma follows immediately from Algorithm \ref{Algo2}. It explains how we can easily obtain the blocks defined in the algorithm.

\begin{lemma}\label{LemmaBlocksG(de,e,n)}

For $2 \leq i \leq n$, suppose $w_i[i,c] \neq 0$. The block $w_{i-1}$ is obtained by
\begin{itemize}

\item removing the row $i$ and the column $c$ from $w_i$, then by

\item multiplying the first column of the new matrix by $w_i[i,c]$.

\end{itemize}

Moreover, if we denote by $a_i$ the unique nonzero entry on the row $i$ of $w$, we have $w_1 = \displaystyle\prod_{i=1}^{n} a_i = \zeta_d^k$ for $0 \leq k \leq d-1$.

\end{lemma}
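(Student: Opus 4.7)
The plan is to verify the lemma by direct inspection of Algorithm \ref{Algo2}, tracking how each right-multiplication carried out during step $i$ acts on the columns of the block-diagonal matrix $\left(\begin{array}{c|c} w_i & 0 \\ \hline 0 & I_{n-i} \end{array}\right)$.

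First I would record the column operations induced by each generator. Right-multiplication by $s_j$ swaps columns $j-1$ and $j$, since $s_j$ is the permutation matrix of the transposition $(j-1,j)$. Right-multiplication by $t_k$ replaces column $1$ by $\zeta_{de}^{k}$ times the old column $2$ and column $2$ by $\zeta_{de}^{-k}$ times the old column $1$. Consequently, the product $s_{c+1} s_{c+2} \cdots s_i$ moves column $c$ to position $i$ while shifting columns $c+1, \ldots, i$ one step to the left, whereas $s_c s_{c-1} \cdots s_2$ moves column $c$ to position $1$ while shifting columns $1, \ldots, c-1$ one step to the right.

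With these descriptions in hand, I treat the two branches of step $i$ separately. If $k=0$, then $w_i[i,c]=1$ and the step applies only $s_{c+1}\cdots s_i$; by monomiality the column $c$ of $w_i$ is $e_i$, so it becomes the $i$-th coordinate vector of the enlarged identity block, while the other columns $1,\ldots, c-1,c+1,\ldots, i$ of $w_i$ land in positions $1,\ldots, i-1$ in order and, once their zero entry in row $i$ is deleted, constitute $w_{i-1}$. If $k\neq 0$, the product $s_c \cdots s_2$ first brings column $c$ (carrying its unique nonzero entry $\zeta_{de}^{k}$ at row $i$) into position $1$; right-multiplication by $t_k$ then normalises that entry to a $1$ at position $[i,2]$ and simultaneously replaces column $1$ by $\zeta_{de}^{k}$ times the original column $1$ of $w_i$; finally $s_3 \cdots s_i$ moves the normalised column to position $i$ and shifts columns $3,\ldots, i$ left by one. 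Combining these three blocks of column operations yields precisely the claim: the block $w_{i-1}$ is obtained by removing row $i$ and column $c$ of $w_i$, and then scaling the new first column by $w_i[i,c]$.

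For the ``moreover'' assertion, I would observe that the product of the nonzero entries of a monomial matrix is invariant under right-multiplication by any $s_j$ (which merely permutes columns) and by any $t_k$ (which multiplies one surviving nonzero entry by $\zeta_{de}^{k}$ and another by $\zeta_{de}^{-k}$, a net factor of $1$). Since every multiplication performed by Algorithm \ref{Algo2} before the final $z^{-k}$ step is of this type, the matrix $\left(\begin{array}{c|c} w_1 & 0 \\ \hline 0 & I_{n-1} \end{array}\right)$ obtained just before that step has the same nonzero-entry product as $w$, namely $\prod_{i=1}^{n} a_i$; but this product is manifestly $w_1$, and since $w\in G(de,e,n)$ it must lie in $\{\zeta_d^{0},\ldots,\zeta_d^{d-1}\}$. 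The only real obstacle is bookkeeping: in the $k\neq 0$ branch one must carefully check that the scalar eventually multiplying the new first column is $w_i[i,c]=\zeta_{de}^{k}$ rather than the dual factor $\zeta_{de}^{-k}$ introduced by $t_k$ on column $2$, and that the column effectively deleted is exactly $c$ and not a neighbour. No idea beyond the explicit matrix form of the generators is needed.
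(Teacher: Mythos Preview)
Your proposal is correct and follows exactly the approach implicit in the paper: the paper does not give a proof but simply states that the lemma ``follows immediately from Algorithm~\ref{Algo2}'', and your argument is precisely the column-by-column verification that justifies this claim. Your additional observation that the product of nonzero entries is preserved under right-multiplication by each $s_j$ and $t_k$ is a clean way to handle the ``moreover'' assertion; the paper leaves this implicit as well.
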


\begin{example}

Let $w$ be as in Example \ref{ExampleAlgo1}, where $n=4$. The block $w_3$ is obtained by removing the row number 4 and the second column from $w_4 = w$, to obtain $\begin{pmatrix}

\zeta_9 & 0 & 0\\
0 & 1 & 0\\
0 & 0 & \zeta_9\\

\end{pmatrix}$, then by multiplying the first column of this matrix by $\zeta_9$. The same can be said for the other block $w_2$. Finally, the block $w_1$ is equal to $\zeta_3$ which corresponds to the product of the nonzero entries of $w$.

\end{example}

\begin{definition}\label{DefREiwG(de,e,n)}

Let $1 \leq i \leq n$. Let $w_i[i,c] \neq 0$ for $1 \leq c \leq i$.

\begin{itemize}

\item If $w_1 = \zeta_d^k$ with $0 \leq k \leq d-1$, we define $R\!E_{1}(w)$ to be the word $\mathbf{z}^{k}$.	
\item If $w_{i}[i,c] =1$, we define $R\!E_{i}(w)$ to be the word\\
$\mathbf{s}_i \mathbf{s}_{i-1} \cdots \mathbf{s}_{c+1}$ (decreasing-index expression).
\item If $w_{i}[i,c] = \zeta_{de}^{k}$ with $k \neq 0$, we define $R\!E_{i}(w)$ to be the word\\
\begin{tabular}{ll}
			$\mathbf{s}_i \cdots \mathbf{s}_3 \mathbf{t}_k$ & if $c=1$,\\
			$\mathbf{s}_i \cdots \mathbf{s}_3 \mathbf{t}_k \mathbf{t}_0$  & if $c=2$,\\
			$\mathbf{s}_i \cdots \mathbf{s}_3 \mathbf{t}_k \mathbf{t}_0 \mathbf{s}_3 \cdots \mathbf{s}_c$ & if $c \geq 3$.\\
			
\end{tabular}

\end{itemize}
\end{definition}

The output $R\!E(w)$ of Algorithm \ref{Algo2} is a concatenation of the words $R\!E_1(w)$, $R\!E_2(w)$, $\cdots$, and $R\!E_n(w)$ obtained at each step $i$ from $n$ to $1$ of Algorithm \ref{Algo2}. Then, we have $R\!E(w) = R\!E_1(w) R\!E_2(w)  \cdots R\!E_n(w)$.

\begin{example}

Let $w$ be defined as in Example \ref{ExampleAlgo1}. We have
$$R\!E(w) = \underset{R\!E_{1}(w)}{\underbrace{\mathbf{z}}} \hspace{0.2cm} \underset{R\!E_{3}(w)}{\underbrace{\mathbf{s}_3 \mathbf{t}_1 \mathbf{t}_0 \mathbf{s}_3}} \hspace{0.2cm} \underset{R\!E_{4}(w)}{\underbrace{ \mathbf{s}_4 \mathbf{s}_3 \mathbf{t}_1 \mathbf{t}_0}}.$$ In this example, $R\!E_2(w)$ is the empty word.

\end{example}

\begin{proposition}\label{PropWordRepG(de,e,n)}

Let $w \in G(de,e,n)$. The word $R\!E(w)$ given by Algorithm \ref{Algo2} is a word representative over $\mathbf{X}$ of $w \in G(de,e,n)$.

\end{proposition}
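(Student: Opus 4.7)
The plan is to track a loop invariant through the execution of Algorithm~\ref{Algo2}: at every point in time, interpreting the bold word $R\!E(w)$ as the corresponding matrix in $G(de,e,n)$ via $\mathbf{t}_i \mapsto t_i$, $\mathbf{s}_j \mapsto s_j$, $\mathbf{z} \mapsto z$, the equation
$$w' \cdot R\!E(w) = w$$
holds. Initially $w' = w$ and $R\!E(w) = \varepsilon$, so the invariant is clear. The proposition will follow from (a) showing the invariant is preserved by every update, and (b) showing that the algorithm terminates with $w' = I_n$.

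For (a), each update inside the outer loop right-multiplies $w'$ by some product $M$ of matrices from $\{t_0, \ldots, t_{de-1}, s_2, s_3, \ldots, s_n\}$, and simultaneously prepends to $R\!E(w)$ a word whose matrix image is $M^{-1}$. This second assertion holds because every $t_i$ and $s_j$ is an involution, so reversing the letter order of such a product inverts its matrix. Hence $(w'M)(M^{-1}R\!E(w)) = w' R\!E(w) = w$, and the invariant survives. The concluding update $w' := w' z^{-k}$, $R\!E(w) := \mathbf{z}^k R\!E(w)$ is handled the same way, using $z^k z^{-k} = I_n$.

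For (b), I would argue by downward induction on $i$ that at the start of iteration $i$ of the outer \textbf{for}-loop, $w'$ is block diagonal with an $i \times i$ top-left block $w_i \in G(de,e,n)$ and a bottom-right block $I_{n-i}$. Within an iteration, if $w_i[i,c] = 1$ then the sequence $s_{c+1} \cdots s_i$ swaps columns successively and carries the $1$ from $[i,c]$ to $[i,i]$; if $w_i[i,c] = \zeta_{de}^k$ with $k \neq 0$, the prefix $s_c \cdots s_2$ first shifts this entry to $[i,1]$, right-multiplication by $t_k$ then uses $t_k[1,2] = \zeta_{de}^{-k}$ to convert it into a $1$ at $[i,2]$ while multiplying the first column of the remaining block by $\zeta_{de}^k$, and finally $s_3 \cdots s_i$ migrates this $1$ to $[i,i]$. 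A column-by-column comparison shows that the new top-left $(i-1) \times (i-1)$ block is precisely the block $w_{i-1}$ described in Lemma~\ref{LemmaBlocksG(de,e,n)}, while the identity block in the bottom-right has been extended by one row and column. After the outer loop terminates, $w'$ is diagonal with entries $(\zeta_d^k, 1, \ldots, 1)$ for some $0 \leq k \leq d-1$, and the final update by $z^{-k}$ produces $w' = I_n$. Combined with (a), this gives $R\!E(w) = w$ in $G(de,e,n)$.

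The only genuinely delicate step is the case $w_i[i,c] \neq 1$ in (b): one has to keep simultaneous track of how right-multiplication by $t_k$ introduces the compensating factor $\zeta_{de}^k$ on column $1$ of the leftover block, and how the two flanking sequences of transpositions restore the monomial structure expected of $w_{i-1}$. Everything else is elementary bookkeeping on column swaps.
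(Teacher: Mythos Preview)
Your proof is correct and follows essentially the same approach as the paper: both argue that Algorithm~\ref{Algo2} right-multiplies $w$ by a sequence of generators down to $I_n$, and then use that the $t_i$'s and $s_j$'s are involutions (and $(z^{-k})^{-1}=z^k$) to conclude that the reversed word $R\!E(w)$ represents $w$. Your loop-invariant packaging and the explicit verification in part~(b) of the block-diagonal structure are more detailed than the paper's version, which simply asserts the reduction to $I_n$ (relying on the informal description preceding the algorithm and on Lemma~\ref{LemmaBlocksG(de,e,n)}), but the substance is the same.
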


\begin{proof}

Let $w \in G(de,e,n)$ such that the product of all the nonzero entries of $w$ is equal to $\zeta_d^k$ for some $0 \leq k \leq d-1$.  Algorithm \ref{Algo2} transforms the matrix $w$ into $I_n$ by multiplying it on the right by elements of $X$. We get $w x_1 \cdots x_{r-1}x_r = I_n$, where $x_1, \cdots, x_{r-1}$ are elements of $X \setminus \{z\}$ and $x_r = z^{-k}$. Hence $w = x_r^{-1} x_{r-1}^{-1} \cdots x_1^{-1} = z^{k} x_{r-1} \cdots x_1$ since $x_i^2 = 1$ for $1 \leq i \leq r-1$. The output $R\!E(w)$ of Algorithm \ref{Algo2} is $R\!E(w) = \mathbf{z}^k \mathbf{x}_{r-1} \cdots \mathbf{x}_1$. Hence it is a word representative over $\mathbf{X}$ of $w \in G(de,e,n)$.  

\end{proof}

The following proposition will prepare us to prove that the output of \mbox{Algorithm \ref{Algo2}} is a reduced expression over $\mathbf{X}$ of a given element $w \in G(de,e,n)$.

\begin{proposition}\label{prop.lengthcompG(de,e,n)}

Let $w$ be an element of $G(de,e,n)$. For all $x \in X$, we have $$\boldsymbol{\ell}(R\!E(xw)) \leq \boldsymbol{\ell}(R\!E(w)) + 1.$$

\end{proposition}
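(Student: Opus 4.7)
The plan is to run Algorithm \ref{Algo2} in parallel on $w$ and on $xw$, and to compare the outputs block by block, using Lemma \ref{LemmaBlocksG(de,e,n)} to describe the successive blocks. Since $R\!E(w) = R\!E_1(w) R\!E_2(w) \cdots R\!E_n(w)$ is a concatenation whose pieces are determined by the blocks $w_i$, it suffices to track how each $w_i$ changes when $w$ is replaced by $xw$, and to bound the resulting change in each $\ell(R\!E_i)$. The argument splits into three cases according to whether $x = z$, $x = s_j$, or $x = t_k$.

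The case $x = z$ is immediate: $zw$ multiplies the first row of $w$ by $\zeta_d$ while leaving rows $2, \ldots, n$ untouched. Since the algorithm at steps $n, n-1, \ldots, 2$ only inspects these untouched rows, one has $R\!E_i(zw) = R\!E_i(w)$ for $2 \leq i \leq n$. Only $w_1$ is affected: $(zw)_1 = \zeta_d \cdot w_1$, hence $R\!E_1(zw) = \mathbf{z}^{k+1}$ if $R\!E_1(w) = \mathbf{z}^k$ with $k < d-1$, and $R\!E_1(zw) = \varepsilon$ if $k = d-1$. Either way, $\ell(R\!E(zw)) - \ell(R\!E(w)) \in \{+1, -(d-1)\}$ and the bound holds.

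For $x = s_j$ with $3 \leq j \leq n$, left multiplication swaps rows $j-1$ and $j$. For $i > j$ the row $i$ is untouched, so a direct induction using Lemma \ref{LemmaBlocksG(de,e,n)} gives $R\!E_i(s_j w) = R\!E_i(w)$ and $(s_j w)_j = s_j w_j$. The analysis then focuses on steps $j$ and $j-1$ of the algorithm on $s_j w$, split into subcases according to the positions $c$ and $c'$ of the nonzero entries in rows $j$ and $j-1$ of $w_j$ and whether those entries equal $1$ or a nontrivial power of $\zeta_{de}$. In each subcase one verifies that the block produced after step $j-1$ coincides with $w_{j-2}$ (so that $R\!E_i(s_j w) = R\!E_i(w)$ for $i < j-1$) and that $\ell(R\!E_j(s_j w) R\!E_{j-1}(s_j w)) \leq \ell(R\!E_j(w) R\!E_{j-1}(w)) + 1$. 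For $x = t_k$, rows $1$ and $2$ of $t_k w$ are swapped and scaled by $\zeta_{de}^{\pm k}$ while rows $i \geq 3$ are untouched, so $R\!E_i(t_k w) = R\!E_i(w)$ for $i \geq 3$. Because the product of the nonzero entries of $t_k$ equals $1$, one has $(t_k w)_1 = w_1$ and hence $R\!E_1(t_k w) = R\!E_1(w)$; the full length change is concentrated in $R\!E_2$, and a short subcase analysis on whether the nonzero entry of row $2$ of $w_2$ lies in column $1$ or $2$ (and on the resulting exponent in row $2$ of $(t_k w)_2$) shows that $\ell(R\!E_2(t_k w)) - \ell(R\!E_2(w)) \in \{-1, 0, +1\}$.

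The main obstacle is the combinatorial bookkeeping in the $s_j$ case: one must track how the cumulative scaling of the first column described by Lemma \ref{LemmaBlocksG(de,e,n)} interacts with the row swap, and verify subcase by subcase that the change in $\ell(R\!E_j) + \ell(R\!E_{j-1})$ never exceeds $1$. The delicate subcases are those in which both $c$ and $c'$ lie in $\{1, 2\}$, where the $\mathbf{t}_k \mathbf{t}_0$ pattern appearing in Definition \ref{DefREiwG(de,e,n)} interacts with the swap in a way that can either lengthen or shorten the resulting word, so that the bound has to be verified by computing the two candidate words explicitly and comparing their lengths.
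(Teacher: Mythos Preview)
Your proposal is correct and follows essentially the same approach as the paper: a three-way split on $x$, identification of which pieces $R\!E_i$ are affected, and a subcase analysis on the column positions and on whether the relevant entries equal $1$. Your observation that $R\!E_1(t_k w) = R\!E_1(w)$ because the product of the nonzero entries of $t_k$ is $1$ is a mild streamlining (the paper instead tracks $R\!E_1 R\!E_2$ jointly in that case and recovers the same fact inside each subcase); conversely, your concern that the subcases with $c, c' \in \{1,2\}$ are especially delicate is unnecessary, since the length formulas coming from Definition~\ref{DefREiwG(de,e,n)} extend uniformly to all $c \geq 1$ via Convention~\ref{ConventionDecreaseIncreaseIndex}, and the paper organises the $s_j$ analysis purely by the dichotomies $c_{i-1} \lessgtr c_i$ and entry $=1$ versus entry $\neq 1$.
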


\begin{proof}

For $1 \leq i \leq n$, there exists a unique $c_i$ such that $w[i,c_i] \neq 0$. We denote $w[i,c_i]$ by $a_i$. We have  $\displaystyle\prod_{i=1}^{n} a_i = \zeta_d^k$ for some $0 \leq k \leq d-1$. \\

\underline{Case 1: Suppose $x = s_i$ for $3 \leq i \leq n$.}\\

A similar case is done in our previous work \cite{GeorgesNeaimeIntervals}. We get $(a)$, $(b)$, $(a')$ and $(b')$ as in the proof of Proposition 3.11 (Case 1) in \cite{GeorgesNeaimeIntervals}. For completeness, we include this part of the proof in this article.\\

Set $w' := s_i w$. Since the left multiplication by the matrix $x$ exchanges the rows $i-1$ and $i$ of $w$ and the other rows remain the same, by Definition \ref{DefREiwG(de,e,n)} and Lemma \ref{LemmaBlocksG(de,e,n)}, we have:\\
$R\!E_{i+1}(xw)R\!E_{i+2}(xw) \cdots R\!E_{n}(xw)=R\!E_{i+1}(w) R\!E_{i+2}(w) \cdots R\!E_{n}(w)$ and\\
$R\!E_{2}(xw)R\!E_{3}(xw) \cdots R\!E_{i-2}(xw)=R\!E_{2}(w) R\!E_{3}(w) \cdots R\!E_{i-2}(w)$.\\
Then, in order to prove our property, we should compare $\boldsymbol{\ell}_1:=\boldsymbol{\ell}(R\!E_{i-1}(w)R\!E_i(w))$ and $\boldsymbol{\ell}_2:=\boldsymbol{\ell}(R\!E_{i-1}(xw)R\!E_i(xw))$.\\

Suppose  $c_{i-1} < c_{i}$, by Lemma \ref{LemmaBlocksG(de,e,n)}, the rows $i-1$ and $i$ of the blocks $w_i$ and $w'_{i}$ are of the form:

\begin{tabular}{ccc}

\begin{tikzpicture}[scale=0.5]

\node at (0,0) {};
\node at (0,0.8) {$w_i$ :};

\end{tikzpicture}

 & & \begin{tikzpicture}[scale=0.5]

\node at (0,1) {$i$};
\node at (0,2) {$i-1$};
\node at (2,3) {$..$};
\node at (2.6,3) {$c$};
\node at (4,3) {$..$};
\node at (5,3) {$c'$};
\node at (5.7,3) {$..$};
\node at (7,3) {$i$};

\node at (3,2) {$b_{i-1}$};
\node at (5,1) {$a_i$};

\draw [-] (1,0.5) to node[auto] {} (1,2.5);
\draw [-] (1,0.5) to node[auto] {} (7.3,0.5);
\draw [-] (7.3,0.5) to node[auto] {} (7.3,2.5);
\draw [-] (1,2.5) to node[auto] {} (7.3,2.5);

\end{tikzpicture}\\

\begin{tikzpicture}[scale=0.5]

\node at (0,0) {};
\node at (0,0.8) {$w'_i$ :};

\end{tikzpicture}

 & & \begin{tikzpicture}[scale=0.5]

\node at (0,1) {$i$};
\node at (0,2) {$i-1$};
\node at (2,3) {$..$};
\node at (2.6,3) {$c$};
\node at (4,3) {$..$};
\node at (5,3) {$c'$};
\node at (5.7,3) {$..$};
\node at (7,3) {$i$};

\node at (3,1) {$b_{i-1}$};
\node at (5,2) {$a_i$};

\draw [-] (1,0.5) to node[auto] {} (1,2.5);
\draw [-] (1,0.5) to node[auto] {} (7.3,0.5);
\draw [-] (7.3,0.5) to node[auto] {} (7.3,2.5);
\draw [-] (1,2.5) to node[auto] {} (7.3,2.5);

\end{tikzpicture}

\end{tabular}

with $c < c'$ and where we write $b_{i-1}$ instead of $a_{i-1}$ since $a_{i-1}$ may change when applying Algorithm \ref{Algo2} if $c_{i-1} =1$, that is $a_{i-1}$ on the first column of $w$ (see the second item of Lemma \ref{LemmaBlocksG(de,e,n)}).\\

We will discuss different cases depending on the values of $a_i$ and $b_{i-1}$.

\begin{itemize}

\item \underline{Suppose $a_i = 1$.}

\begin{itemize}

\item \underline{If $b_{i-1} =1$,}\\
we have $R\!E_{i}(w) = \boldsymbol{s}_i \cdots \boldsymbol{s}_{c'+2} \boldsymbol{s}_{c'+1}$ and $R\!E_{i-1}(w) = \boldsymbol{s}_{i-1} \cdots \boldsymbol{s}_{c+2} \boldsymbol{s}_{c+1}$.
Furthermore, we have $R\!E_{i}(xw) = \boldsymbol{s}_i \cdots \boldsymbol{s}_{c+2} \boldsymbol{s}_{c+1}$\\
and $R\!E_{i-1}(xw) = \boldsymbol{s}_{i-1} \cdots \boldsymbol{s}_{c'+1} \boldsymbol{s}_{c'}$.\\
It follows that $\boldsymbol{\ell}_1 = ((i-1)-(c+1)+1) + (i-(c'+1)+1) = 2i-c-c'-1$ and $\boldsymbol{\ell}_2 = ((i-1)-c'+1) + (i-(c+1)+1) = 2i-c-c'$ hence $\boldsymbol{\ell}_2 = \boldsymbol{\ell}_1 +1$.\\
\item \underline{If $b_{i-1} = \zeta_{de}^{k}$ with $1 \leq k \leq de-1$,}\\
we have $R\!E_{i}(w) = \boldsymbol{s}_i \cdots \boldsymbol{s}_{c'+2} \boldsymbol{s}_{c'+1}$ and $R\!E_{i-1}(w) = \boldsymbol{s}_{i-1} \cdots \boldsymbol{s}_3 \boldsymbol{t}_k \boldsymbol{t}_0 \boldsymbol{s}_{3} \cdots \boldsymbol{s}_{c}$. Furthermore, we have $R\!E_{i}(xw) = \boldsymbol{s}_i \cdots \boldsymbol{s}_3 \boldsymbol{t}_k \boldsymbol{t}_0 \boldsymbol{s}_{3} \cdots \boldsymbol{s}_{c}$ and $R\!E_{i-1}(xw) = \boldsymbol{s}_{i-1} \cdots \boldsymbol{s}_{c'}$.\\
It follows that $\boldsymbol{\ell}_1 = (((i-1)-3+1) + 2 + (c-3+1)) + (i-(c'+1)+1) = 2i+c-c'-3$ and $\boldsymbol{\ell}_2 = ((i-1)-c'+1) + ((i-3+1) + 2 + (c-3+1)) = 2i+c-c'-2$ hence $\boldsymbol{\ell}_2 = \boldsymbol{\ell}_1 +1$.

\end{itemize}

It follows that

\begin{center}
if $a_i =1$, then $\boldsymbol{\ell}(R\!E(s_iw))= \boldsymbol{\ell}(R\!E(w)) +1. \hspace{1cm} (a)$
\end{center}

\item \underline{Suppose now that $a_i = \zeta_{de}^{k}$ with $1 \leq k \leq de-1$.}

\begin{itemize}

\item \underline{If $b_{i-1} = 1$,}\\
we have $R\!E_{i}(w) = \boldsymbol{s}_{i} \cdots \boldsymbol{s}_3 \boldsymbol{t}_k \boldsymbol{t}_0 \boldsymbol{s}_{3} \cdots \boldsymbol{s}_{c'}$ and $R\!E_{i-1}(w) =\boldsymbol{s}_{i-1} \cdots \boldsymbol{s}_{c+1}$.\\
Also, we have $R\!E_{i}(xw) = \boldsymbol{s}_i \cdots \boldsymbol{s}_{c+1}$ and\\
$R\!E_{i-1}(xw) = \boldsymbol{s}_{i-1} \cdots \boldsymbol{s}_3 \boldsymbol{t}_k \boldsymbol{t}_0 \boldsymbol{s}_{3} \cdots \boldsymbol{s}_{c'-1}$.\\
It follows that $\boldsymbol{\ell}_1 = ((i-1)-(c+1)-1) + ((i-3+1)+2+(c'-3+1)) = 2i-c+c'-5$ and $\boldsymbol{\ell}_2 = (((i-1)-3+1)+2+((c'-1)-3+1)) + (i-(c+1)-1) = 2i-c+c'-6$ hence $\boldsymbol{\ell}_2 = \boldsymbol{\ell}_1 -1$.\\

\item \underline{If $b_{i-1} = \zeta_{de}^{k'}$ with $1 \leq k' \leq de-1$,}\\
we have $R\!E_{i}(w) = \boldsymbol{s}_{i} \cdots \boldsymbol{s}_3 \boldsymbol{t}_k \boldsymbol{t}_0 \boldsymbol{s}_{3} \cdots  \boldsymbol{s}_{c'}$ and\\
$R\!E_{i-1}(w) = \boldsymbol{s}_{i-1} \cdots \boldsymbol{s}_3 \boldsymbol{t}_{k'} \boldsymbol{t}_0 \boldsymbol{s}_{3} \cdots \boldsymbol{s}_{c}$. \\
Also, we have $R\!E_{i}(xw) = \boldsymbol{s}_{i} \cdots \boldsymbol{s}_3 \boldsymbol{t}_{k'} \boldsymbol{t}_0 \boldsymbol{s}_{3} \cdots \boldsymbol{s}_{c}$ and\\
$R\!E_{i-1}(xw) = \boldsymbol{s}_{i-1} \cdots \boldsymbol{s}_3 \boldsymbol{t}_k \boldsymbol{t}_0 \boldsymbol{s}_{3} \cdots \boldsymbol{s}_{c'-1}$.\\
It follows that $\boldsymbol{\ell}_1 = ((i-1)-3+1) +2+(c-3+1) + (i-3+1) +2+ (c'-3+1) = 2i+c+c'-5$ and $\boldsymbol{\ell}_2 = ((i-1)-3+1)+2+((c'-1)-3+1)+(i-3+1)+2+(c-3+1) = 2i+c+c'-6$ hence $\boldsymbol{\ell}_2 = \boldsymbol{\ell}_1 -1$.

\end{itemize}

It follows that

\begin{center}
if $a_i \neq 1$, then $\boldsymbol{\ell}(R\!E(s_iw)) = \boldsymbol{\ell}(R\!E(w)) -1. \hspace{1cm} (b)$
\end{center}

\end{itemize}

Suppose, on the other hand, $c_{i-1} > c_i$. Recall that $w' = s_iw$. If $w'[i-1,c'_{i-1}]$ and $w'[i,c'_{i}]$ denote the nonzero entries of $w'$ on the rows $i-1$ and $i$, respectively, we have $w'[i-1,c'_{i-1}] = a_i$ and $w'[i,c'_{i}] = a_{i-1}$. For $w'$, we have $c'_{i-1} < c'_{i}$, in which case the preceding analysis would give:

\begin{center}

if $a_{i-1} = 1$, then $\boldsymbol{\ell}(R\!E(s_i(s_iw))) = \boldsymbol{\ell}(R\!E(s_iw)) + 1$,\\
if $a_{i-1} \neq 1$, then $\boldsymbol{\ell}(R\!E(s_i(s_iw))) = \boldsymbol{\ell}(R\!E(s_iw)) - 1$.

\end{center}

Hence, since $s_i^2 = 1$, we get the following:

\begin{center}

if $a_{i-1} = 1$, then $\boldsymbol{\ell}(R\!E(s_iw)) = \boldsymbol{\ell}(R\!E(w)) - 1. \hspace{1cm} (a')$,\\
if $a_{i-1} \neq 1$, then $\boldsymbol{\ell}(R\!E(s_iw)) = \boldsymbol{\ell}(R\!E(w)) + 1. \hspace{1cm} (b')$.

\end{center}

\underline{Case 2: Suppose $x = t_i$ for $0 \leq i \leq de-1$.}\\

Set $w' := t_iw$. By the left multiplication by $t_i$, we have that the last $n-2$ rows of $w$ and $w'$ are the same. Hence, by Definition \ref{DefREiwG(de,e,n)} and Lemma \ref{LemmaBlocksG(de,e,n)}, we have:\\
$R\!E_3(xw)R\!E_4(xw) \cdots R\!E_n(xw) = R\!E_3(w)R\!E_4(w) \cdots R\!E_n(w)$. In order to prove our property in this case, we should compare $\boldsymbol{\ell}_1 := \boldsymbol{\ell}(R\!E_1(w)R\!E_2(w))$ and\\ $\boldsymbol{\ell}_2 := \boldsymbol{\ell}(R\!E_1(xw)R\!E_2(xw))$.

\begin{itemize}

\item \underline{Consider the case where $c_1 < c_2$.}\\

Since $c_1 < c_2$, by Lemma \ref{LemmaBlocksG(de,e,n)}, the blocks $w_2$ and $w'_2$ are of the form:

$w_2 = \begin{pmatrix}

b_1 & 0\\
0 & a_2\\

\end{pmatrix}$ and $w'_2 = \begin{pmatrix}

0 & \zeta_{de}^{-i}a_2\\
\zeta_{de}^{i}b_{1} & 0\\ 

\end{pmatrix}$ with $b_{1}$ instead of $a_{1}$ since $a_{1}$ may change when applying Algorithm \ref{Algo2} if $c_{1} =1$.

\begin{itemize}
\item \underline{Suppose $a_2 = 1$,}\\
we have $b_1 = \zeta_d^k$ hence $\boldsymbol{\ell}_1 = k$. We also have $R\!E_2(xw) = \boldsymbol{t}_{i+ke}$ and $R\!E_1(xw) = \mathbf{z}^k$. Hence we get $\boldsymbol{\ell}_2 = k+1$. It follows that when $c_1 < c_2$,

\begin{center}
if $a_2 =1$, then $\boldsymbol{\ell}(R\!E(t_iw)) = \boldsymbol{\ell}(R\!E(w)) +1. \hspace{1cm} (c)$
\end{center}

\item \underline{Suppose $a_2 = \zeta_{de}^{k'}$ with $1 \leq k' \leq de-1$,}\\
we have $b_1 = \zeta_{de}^{ke-k'}$. We get $R\!E_2(w) = \boldsymbol{t}_{k'}\boldsymbol{t}_0$ and $R\!E_1(w) = \mathbf{z}^k$. Thus, $\boldsymbol{\ell}_1 = k+2$. We also get $R\!E_2(xw) = \boldsymbol{t}_{ke+i-k'}$ and $R\!E_1(xw) = \mathbf{z}^k$. Thus, $\boldsymbol{\ell}_2 = k+1$. It follows that when $c_1 < c_2$,

\begin{center}
if $a_2 \neq 1$, then $\boldsymbol{\ell}(R\!E(t_iw)) = \boldsymbol{\ell}(R\!E(w))  -1. \hspace{1cm} (d)$
\end{center}

\end{itemize}

\item \underline{Now, consider the case where $c_1 > c_2$.}\\

Since $c_1 > c_2$, by Lemma \ref{LemmaBlocksG(de,e,n)}, the blocks $w_2$ and $w'_2$ are of the form:

$w_2 = \begin{pmatrix}

0 & a_1\\
b_2 & 0\\

\end{pmatrix}$ and $w'_2 = \begin{pmatrix}

\zeta_{de}^{-i}b_2 & 0\\
0 & \zeta_{de}^{i}a_{1}\\ 

\end{pmatrix}$ with $b_{2}$ instead of $a_{2}$ since $a_{2}$ may change when applying Algorithm \ref{Algo2} if $c_{2} =1$.

\begin{itemize}

\item \underline{Suppose $a_1 \neq \zeta_{de}^{-i}$,}\\
we have $\boldsymbol{\ell}_1 = k+1$, and since $\zeta_{de}^{i}a_1 \neq 1$, we have $\boldsymbol{\ell}_2 = k+2$. Hence when $c_1 > c_2$,

\begin{center}
if $a_1 \neq \zeta_{de}^{-i}$, then $\boldsymbol{\ell}(R\!E(t_iw)) = \boldsymbol{\ell}(R\!E(w)) +1. \hspace{1cm} (e)$
\end{center}

\item \underline{Suppose $a_1 = \zeta_{de}^{-i}$,} we have $b_2 = \zeta_{de}^{i+ek}$.\\
We get $\boldsymbol{\ell}_1 = k+1$ and $\boldsymbol{\ell}_2 = k$. Hence when $c_1 > c_2$,

\begin{center}
if $a_1 = \zeta_{de}^{-i}$, then $\boldsymbol{\ell}(R\!E(t_iw)) = \boldsymbol{\ell}(R\!E(w)) -1. \hspace{1cm} (f)$
\end{center}

\end{itemize}

\end{itemize}

\underline{Case 3: Suppose $x = z$.}\\

Set $w' := zw$. By the left multiplication by $z$, we have that the last $n-1$ rows of $w$ and $w'$ are the same. Hence, by Definition \ref{DefREiwG(de,e,n)} and Lemma \ref{LemmaBlocksG(de,e,n)}, we have:\\
$R\!E_2(xw)R\!E_3(xw) \cdots R\!E_n(xw) = R\!E_2(w)R\!E_3(w) \cdots R\!E_n(w)$. In order to prove our property in this case, we should compare $\boldsymbol{\ell}_1 := \boldsymbol{\ell}(R\!E_1(w))$ and $\boldsymbol{\ell}_2 := \boldsymbol{\ell}(R\!E_1(xw))$.

We get $w_1$ is equal to $b_1$ and $w'_1 = \zeta_d b_1$ with $b_{1}$ instead of $a_{1}$ since $a_{1}$ may change when applying Algorithm \ref{Algo2} if $c_{1} =1$. We have $b_1 = \displaystyle\prod_{i=1}^{n} a_i = \zeta_d^k$ for some $0 \leq k \leq d-1$. Hence if $k \neq d-1$, we get $\boldsymbol{\ell}_1 = k$ and $\boldsymbol{\ell}_2 = k+1$ and if $k = d-1$, we get $\boldsymbol{\ell}_1 = d-1$ and $\boldsymbol{\ell}_2 = 0$. It follows that

\begin{center}
$\boldsymbol{\ell}(R\!E(zw)) \leq \boldsymbol{\ell}(R\!E(w)) + 1. \hspace{1cm} (g)$
\end{center}

\end{proof}

The next proposition establishes that Algorithm \ref{Algo2} produces geodesic normal forms for $G(de,e,n)$.

\begin{proposition}\label{PropREwRedExp}

Let $w$ be an element of $G(de,e,n)$. The word $R\!E(w)$ is a reduced expression over $\mathbf{X}$ of $w$.

\end{proposition}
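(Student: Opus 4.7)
The plan is to deduce the proposition directly from the two propositions already in place. By Proposition \ref{PropWordRepG(de,e,n)}, the word $R\!E(w)$ represents $w$, so what remains is to check that its length is minimal among all words over $\mathbf{X}$ representing $w$. Denoting by $\ell(w)$ the word length of $w$ with respect to $\mathbf{X}$, the goal reduces to proving the equality $\boldsymbol{\ell}(R\!E(w)) = \ell(w)$.

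To get the upper bound $\boldsymbol{\ell}(R\!E(w)) \leq \ell(w)$, I would prove by induction on $m$ the more general statement: for every word $\mathbf{x}_1 \mathbf{x}_2 \cdots \mathbf{x}_m$ over $\mathbf{X}$ and $w := x_1 x_2 \cdots x_m \in G(de,e,n)$, one has $\boldsymbol{\ell}(R\!E(w)) \leq m$. For $m=0$ the word is empty, $w = I_n$, and $R\!E(I_n)$ is the empty word, so the inequality holds. For the inductive step I write $w = x_1 w'$ with $w' := x_2 \cdots x_m$, apply the induction hypothesis to get $\boldsymbol{\ell}(R\!E(w')) \leq m-1$, and then invoke Proposition \ref{prop.lengthcompG(de,e,n)} to conclude
\[
\boldsymbol{\ell}(R\!E(w)) = \boldsymbol{\ell}(R\!E(x_1 w')) \leq \boldsymbol{\ell}(R\!E(w')) + 1 \leq m.
\]
Specializing to a reduced expression of $w$ yields $\boldsymbol{\ell}(R\!E(w)) \leq \ell(w)$.

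For the reverse inequality $\ell(w) \leq \boldsymbol{\ell}(R\!E(w))$, I would simply invoke Proposition \ref{PropWordRepG(de,e,n)}, which says $R\!E(w)$ is a word over $\mathbf{X}$ representing $w$; hence its length is at least the minimal length $\ell(w)$. Combining the two inequalities gives the desired equality $\boldsymbol{\ell}(R\!E(w)) = \ell(w)$, which is exactly the assertion that $R\!E(w)$ is a reduced expression of $w$ over $\mathbf{X}$.

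The argument itself is short and purely formal; there is no genuine combinatorial obstacle left at this stage. All the hard work sits in Proposition \ref{prop.lengthcompG(de,e,n)}, where the effect on $\boldsymbol{\ell}(R\!E(\cdot))$ of a left multiplication by each generator $x \in X$ was tracked through all cases (the $s_i$, $t_i$ and $z$ cases, with subcases according to the positions $c_{i-1}, c_i$ and to whether the pivots $a_i$ are trivial or not). Once that inequality is available, reducedness follows immediately by the induction above.
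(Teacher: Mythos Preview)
Your proof is correct and follows essentially the same approach as the paper: both arguments obtain $\ell(w) \leq \boldsymbol{\ell}(R\!E(w))$ from Proposition~\ref{PropWordRepG(de,e,n)} and $\boldsymbol{\ell}(R\!E(w)) \leq \ell(w)$ by iterating Proposition~\ref{prop.lengthcompG(de,e,n)} along a reduced word for $w$. The only cosmetic difference is that you phrase the iteration as a formal induction on word length, whereas the paper writes out the chain of inequalities directly.
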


\begin{proof}

We must prove that $\ell(w) = \boldsymbol{\ell}(R\!E(w))$. Let $\mathbf{x}_1\mathbf{x}_2 \cdots \mathbf{x}_r$ be a reduced expression over $\mathbf{X}$ of $w$. Hence $\ell(w) = \boldsymbol{\ell}(\mathbf{x}_1\mathbf{x}_2 \cdots \mathbf{x}_r) = r$. Since $R\!E(w)$ is a word representative over $\mathbf{X}$ of $w$, we have $\boldsymbol{\ell}(R\!E(w)) \geq \boldsymbol{\ell}(\mathbf{x}_1\mathbf{x}_2 \cdots \mathbf{x}_r) = r$. 

We prove that $\boldsymbol{\ell}(R\!E(w)) \leq r$. Write $w$ as $x_1x_2 \cdots x_r$ where $x_1,x_2, \cdots, x_r$ are the matrices of $G(e,e,n)$ corresponding to $\mathbf{x}_1,\mathbf{x}_2, \cdots, \mathbf{x}_r$. By Proposition \ref{prop.lengthcompG(de,e,n)}, we have: $\boldsymbol{\ell}(R\!E(w)) = \boldsymbol{\ell}(R\!E(x_1x_2 \cdots x_r)) \leq \boldsymbol{\ell}(R\!E(x_2x_3 \cdots x_r)) +1 \leq \boldsymbol{\ell}(R\!E(x_3 \cdots x_r)) +2 \leq \cdots \leq r$. Hence $\boldsymbol{\ell}(R\!E(w))$ is equal to $\ell(w)$. This establishes that $R\!E(w)$ is a reduced expression over $\mathbf{X}$ of $w$.

\end{proof}

\begin{remark}\label{GeodesicFormsForG(e,e,n)}

Geodesic normal forms for the complex reflection groups $G(e,e,n)$ have been already established in our previous work \cite{GeorgesNeaimeIntervals}. They are explicitly defined by an algorithm (similar to Algorithm \ref{Algo2}). Let $w \in G(e,e,n)$. The output of the algorithm is the word also denoted by $R\!E(w)$ and defined as a concatenation of the words $R\!E_2(w)$, $R\!E_3(w)$, $\cdots$, $R\!E_n(w)$ introduced as in Definition \ref{DefREiwG(de,e,n)}. It describes a minimal word representative of the element $w \in G(e,e,n)$.

\end{remark}

As a direct consequence of Algorithm \ref{Algo2} and Proposition \ref{PropREwRedExp}, the next statement characterizes the elements of $G(de,e,n)$ that are of maximal length over the generating set of Corran-Lee-Lee.

\begin{proposition}\label{PropMaxLengthG(de,e,n)}

Let $d > 1, e > 1$ and $n \geq 2$. The maximal length of an element of $G(de,e,n)$ over the generating set of Corran-Lee-Lee is \mbox{$n(n-1) + d-1$.} It is realized for diagonal matrices $w$ such that for all $2 \leq i \leq n$, we have $w[i,i] = \zeta_{de}^{k_i}$ with $1 \leq k_i \leq de-1$ and $w[1,1] = \zeta_{de}^x$ with $x + (k_2 \cdots k_n) = e(d-1)$. A minimal word representative of such an element is of the form $$\mathbf{z}^{d-1} (\mathbf{t}_{k_2}\mathbf{t}_0) (\mathbf{s}_3\mathbf{t}_{k_3}\mathbf{t}_0\mathbf{s}_3)\cdots (\mathbf{s}_n \cdots \mathbf{s}_3\mathbf{t}_{k_n}\mathbf{t}_0\mathbf{s}_3 \cdots \mathbf{s}_n),$$ with $1 \leq k_2, \cdots, k_n \leq de-1$. The number of elements that are of maximal length is then $(de-1)^{(n-1)}$.

\end{proposition}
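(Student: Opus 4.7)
The plan is to use Proposition \ref{PropREwRedExp} to reduce the problem to bounding $\boldsymbol{\ell}(R\!E(w)) = \sum_{i=1}^n \boldsymbol{\ell}(R\!E_i(w))$ block by block. From Definition \ref{DefREiwG(de,e,n)}, for $2 \leq i \leq n$ and $w_i[i, c_i] = a_i \neq 0$, the length of $R\!E_i(w)$ equals $i - c_i$ when $a_i = 1$, $i - 1$ when $a_i \neq 1$ and $c_i = 1$, $i$ when $a_i \neq 1$ and $c_i = 2$, and $i + c_i - 2$ when $a_i \neq 1$ and $c_i \geq 3$. Comparing these four expressions as $c_i$ ranges over $\{1, \ldots, i\}$ shows that $\boldsymbol{\ell}(R\!E_i(w)) \leq 2i - 2$, with equality exactly when $c_i = i$ and $a_i \neq 1$. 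For $i = 1$, $R\!E_1(w) = \mathbf{z}^k$ with $w_1 = \zeta_d^k$ and $0 \leq k \leq d-1$, so the block maximum is $d - 1$, attained only when $w_1 = \zeta_d^{d-1}$. Summing yields the candidate upper bound $(d-1) + \sum_{i=2}^n (2i - 2) = n(n-1) + d - 1$.

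Next I would check that these individual maxima are jointly realizable and identify which matrices achieve them. The condition $c_n = n$ forces the unique nonzero entry of row $n$ of $w$ to lie at $(n, n)$; monomiality then forces the nonzero entry of column $n$ to lie at $(n, n)$ as well. By Lemma \ref{LemmaBlocksG(de,e,n)}, $w_{n-1}$ is obtained from the top-left $(n-1) \times (n-1)$ submatrix of $w$ by rescaling its first column, so it has the same zero pattern as that submatrix; imposing $c_{n-1} = n - 1$ on $w_{n-1}$ is therefore the same zero-pattern condition one dimension down. Iterating, $w$ must be diagonal. Conversely, any diagonal $w$ with $w[i,i] = \zeta_{de}^{k_i}$ and $k_i \neq 0$ for all $i \geq 2$ yields $c_i = i$ and $a_i \neq 1$ at every step, and hence realizes every block maximum.

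Finally, for such a diagonal $w$, Lemma \ref{LemmaBlocksG(de,e,n)} gives $w_1 = \prod_{i=1}^n w[i,i] = \zeta_{de}^{x + k_2 + \cdots + k_n}$, where $w[1,1] = \zeta_{de}^x$. Attaining the first-block maximum requires this to equal $\zeta_d^{d-1} = \zeta_{de}^{e(d-1)}$, giving the congruence $x + k_2 + \cdots + k_n \equiv e(d-1) \pmod{de}$. Substituting $c_i = i$ and $a_i = \zeta_{de}^{k_i}$ into the $c \geq 3$ case of Definition \ref{DefREiwG(de,e,n)} (with the $i = 2$ factor collapsing to $\mathbf{t}_{k_2}\mathbf{t}_0$ via Convention \ref{ConventionDecreaseIncreaseIndex}) and prepending $R\!E_1(w) = \mathbf{z}^{d-1}$ produces the claimed minimal word representative. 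The $n - 1$ parameters $k_2, \ldots, k_n$ then range freely over $\{1, \ldots, de - 1\}$ while $x$ is uniquely determined modulo $de$ by the congruence, yielding the count $(de-1)^{n-1}$. The only genuinely delicate point is the deduction of diagonality from the joint condition $c_i = i$ for all $i$, which follows cleanly from the monomial structure combined with Lemma \ref{LemmaBlocksG(de,e,n)}.
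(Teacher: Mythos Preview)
Your proposal is correct and follows exactly the approach the paper intends: the paper states this proposition as ``a direct consequence of Algorithm~\ref{Algo2} and Proposition~\ref{PropREwRedExp}'' without further argument, and you have supplied the block-by-block maximization of $\boldsymbol{\ell}(R\!E_i(w))$, the diagonality deduction via Lemma~\ref{LemmaBlocksG(de,e,n)}, and the count, which together constitute precisely those details.
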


\subsection{The case of $G(d,1,n)$}\label{SubGeodesicG(d,1,n)}

We establish a similar construction for the case of $G(d,1,n)$ for $d > 1$ and $n \geq 2$. We recall the diagram of the presentation of $G(d,1,n)$:

\begin{figure}[H]

\begin{small}
\begin{center}
\begin{tikzpicture}

\node[draw, shape=circle, label=above:$\mathbf{z}$] (1) at (0,0) {$d$};
\node[draw, shape=circle,label=above:$\mathbf{s}_2$] (2) at (2,0) {$2$};
\node[draw, shape=circle,label=above:$\mathbf{s}_3$] (3) at (4,0) {$2$};
\node[draw, shape=circle,label=above:$\mathbf{s}_{n-1}$] (n-1) at (6,0) {$2$};
\node[draw,shape=circle,label=above:$\mathbf{s}_n$] (n) at (8,0) {$2$};

\draw[thick,-,double] (1) to (2);
\draw[thick,-] (2) to (3);
\draw[dashed,-,thick] (3) to (n-1);
\draw[thick,-] (n-1) to (n);

\end{tikzpicture}
\end{center}
\end{small}

\end{figure}

Denote by $\mathbf{X}$ the set $\{\mathbf{z}, \mathbf{s}_2, \cdots, \mathbf{s}_n\}$ of the generators. The generator $\mathbf{z}$ corresponds to the matrix $z := Diag(\zeta_d,1,\cdots,1)$ in $G(d,1,n)$ with $\zeta_d = exp(2i\pi / d)$ and $\mathbf{s}_j$ corresponds to the transposition matrix $s_j := (j-1,j)$ for $2 \leq j \leq n$. Denote by $X$ the set $\{z,s_2,s_3, \cdots, s_n\}$ of these matrices.\\

We define Algorithm \ref{Algo3} below that produces a word $R\!E(w)$ for each matrix $w$ of $G(d,1,n)$. This Algorithm is different than Algorithm \ref{Algo2}. Let us explain the steps of the algorithm. Let $w_n := w \in G(d,1,n)$. For $i$ from $n$ to $1$, the $i$-th step of the algorithm transforms the block diagonal matrix $\left(
\begin{array}{c|c}
w_i & 0 \\
\hline
0 & I_{n-i}
\end{array}
\right)$ into a block diagonal matrix $\left(
\begin{array}{c|c}
w_{i-1} & 0 \\
\hline
0 & I_{n-i+1}
\end{array}
\right) \in G(d,1,n)$. Let $w_i[i,c] \neq 0$ be the nonzero coefficient on the row $i$ of $w_i$. If $w_i[i,c] =1$, we shift it into the diagonal position $[i,i]$ by right multiplication by transpositions. If $w_i[i,c] = \zeta_d^k$ with $k \geq 1$, we shift it into position $[i,1]$ by right multiplication by transpositions, followed by a right multiplication by $z^{-k}$, then we shift the $1$ obtained in position $[i,1]$ into the diagonal position $[i,i]$ by right multiplication by transpositions. Let us illustrate these operations by the following example.

\begin{algorithm}\label{Algo3}

\SetKwInOut{Input}{Input}\SetKwInOut{Output}{Output}

\noindent\rule{12cm}{0.5pt}

\Input{$w$, a matrix in $G(d,1,n)$, with $d > 1$ and $n \geq 2$.}
\Output{$R\!E(w)$, a word over $\mathbf{X}$.}

\noindent\rule{12cm}{0.5pt}

\textbf{Local variables}: $w'$, $R\!E(w)$,  $i$, $U$, $c$, $k$.

\noindent\rule{12cm}{0.5pt}

\textbf{Initialisation}:
$U:=[1,\zeta_d,\zeta_{d}^2, \cdots ,\zeta_{d}^{d-1}]$, $R\!E(w) := \varepsilon$: the empty word, $w' := w$.

\noindent\rule{12cm}{0.5pt}

\For{$i$ \textbf{from} $n$ \textbf{down to} $1$} {
	$c:=1$; $k:=0$; \\
	\While{$w'[i,c] = 0$}{$c :=c+1$\; 
	}
	 \textit{\#Then $w'[i,c]$ is the root of unity on the row $i$}\;
	\While{$U[k+1]\neq w'[i,c] $}{$k :=k+1$\;
	}
	\textit{\#Then $w'[i,c] = \zeta_{d}^k$.}\\

		\If{$k \neq 0$}{
		$w' := w's_{c}s_{c-1} \cdots s_{3}s_{2}z^{-k}$; \textit{\#Then $w'[i,2] =1$}\;
		$R\!E(w) := \mathbf{z}^{k}\mathbf{s}_2\mathbf{s}_3 \cdots \mathbf{s}_c R\!E(w)$\;
		$c:=1$\;
	}	
	$w' := w's_{c+1} \cdots s_{i-1} s_{i}$; \textit{\#Then $w'[i,i] = 1$}\;
	$R\!E(w) := \mathbf{s}_i \mathbf{s}_{i-1} \cdots \mathbf{s}_{c+1} R\!E(w)$\;
}
\textbf{Return} $R\!E(w)$;

\noindent\rule{12cm}{0.5pt}

\caption{A word over $\mathbf{X}$ corresponding to an element $w \in G(d,1,n)$.}
\end{algorithm}

\begin{example}\label{ExampAlgoG(d,1,n)}

Let $w := \begin{pmatrix}

0 & \zeta_3 & 0\\
0 & 0 & \zeta_3^2\\
\zeta_3^2 & 0 & 0

\end{pmatrix}$ $\in G(3,1,3)$.\\
Step $1$ $(i=3, k=2, c=1)$: $w':= w z^{-2}=\begin{pmatrix}

0 & \zeta_3 & 0\\
0 & 0 & \zeta_3^2\\
1 & 0 & 0

\end{pmatrix}$, then\\ $w':= w' s_2 s_3 = \begin{pmatrix}

\zeta_3 & 0 & 0\\
0 & \boxed{\zeta_3^2} & 0\\
0 & 0 & \mathbf{1}

\end{pmatrix}$.\\
Step $2$ $(i= 2, k = 2, c=1)$: $w' := w' s_2 = \begin{pmatrix}

0 & \zeta_3 & 0\\
\zeta_3^2 & 0 & 0\\
0 & 0 & 1

\end{pmatrix}$, then $w':= w' z^{-2} = \begin{pmatrix}

0 & \zeta_3 & 0\\
1 & 0 & 0\\
0 & 0 & 1

\end{pmatrix}$, then $w' := w' s_2 = \begin{pmatrix}

\boxed{\zeta_3} & 0 & 0\\
0 & \mathbf{1} & 0\\
0 & 0 & 1

\end{pmatrix}$.\\
Step $3$ $(i=1, k=1, c=1)$: $w' := w' z^{-1} = I_3$.\\
Hence $R\!E(w) = \mathbf{z} \mathbf{s}_2 \mathbf{z}^2 \mathbf{s}_2 \mathbf{s}_3 \mathbf{s}_2 \mathbf{z}^2$.

\end{example}

The next lemma follows directly from Algorithm \ref{Algo3}.

\begin{lemma}\label{LemmaBlocksG(d,1,n)}

For $2 \leq i \leq n$, let $w_i[i,c] \neq 0$ be the nonzero coefficient on the row $i$ of $w_i$. The block $w_{i-1}$ is obtained by removing the row $i$ and the column $c$ from $w_i$. Moreover, $w_1$ is equal to the nonzero entry on the first row of $w$.

\end{lemma}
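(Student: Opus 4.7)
The plan is to trace one iteration of Algorithm \ref{Algo3} for a fixed $i\ge 2$ and show directly that the upper-left $(i-1)\times(i-1)$ block of the resulting matrix is $w_i$ with row $i$ and column $c$ deleted. Splitting into the cases $k=0$ and $k\ne 0$ (where $w_i[i,c]=\zeta_d^k$), the argument reduces to keeping careful track of how the two transposition sequences permute the columns of $w_i$ and of where the rescaling by $z^{-k}$ acts.

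In the case $k=0$, the only right-multiplication is by $s_{c+1}\cdots s_i$. Since right-multiplication by $s_j$ swaps columns $j-1$ and $j$, this composition moves column $c$ successively into position $i$, while shifting columns $c+1,\ldots,i$ into positions $c,\ldots,i-1$. Row $i$, whose unique nonzero entry is the $1$ in column $c$, therefore becomes $(0,\ldots,0,1)$, and the upper-left $(i-1)\times(i-1)$ block is exactly $w_i$ with row $i$ and column $c$ removed.

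In the case $k\ne 0$ I would proceed in three stages. First, $s_c s_{c-1}\cdots s_2$ moves column $c$ into position $1$ and shifts the old columns $1,\ldots,c-1$ into positions $2,\ldots,c$. Second, $z^{-k}=\mathrm{Diag}(\zeta_d^{-k},1,\ldots,1)$ rescales only the current column $1$, converting the $\zeta_d^k$ sitting in row $i$ into a $1$. Third, $s_2\cdots s_i$ moves column $1$ into position $i$ and shifts columns $2,\ldots,i$ one step to the left. The decisive observation is that the rescaling factor $\zeta_d^{-k}$ is confined to the column which is then immediately ejected to position $i$, whereas the remaining $i-1$ columns are precisely the columns of $w_i$ other than column $c$, in their original order and with no multiplicative correction. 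This is exactly the feature that distinguishes the $G(d,1,n)$ case from Lemma \ref{LemmaBlocksG(de,e,n)}: using the diagonal generator $\mathbf{z}$ (which acts on only one column) in place of $\mathbf{t}_k$ (which acts on two columns) prevents any leftover scaling of the first column of the new block.

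For the moreover part I would use monomiality of $w$: each column of $w$ contains exactly one nonzero entry, so the indices $c_2,\ldots,c_n$ removed during the iteration (where $c_i$ is the column of the nonzero entry of row $i$ in the current block) are pairwise distinct and all differ from the column holding the nonzero entry of row $1$. Iterating the first statement of the lemma, the unique surviving entry after the full run of the algorithm is the nonzero entry on the first row of $w$. The only real obstacle is the bookkeeping of column indices through the two transposition sequences $s_c s_{c-1}\cdots s_2$ and $s_2 s_3\cdots s_i$, carried out carefully enough to confirm that the rescaling $z^{-k}$ touches only the column that leaves the block.
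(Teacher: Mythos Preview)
Your argument is correct and is precisely the verification the paper has in mind: the paper itself does not prove this lemma at all, stating only that it ``follows directly from Algorithm~\ref{Algo3}.'' Your case split on $k=0$ versus $k\neq 0$ and the tracking of column permutations is the natural way to unpack that claim, and your key observation---that the diagonal generator $z^{-k}$ rescales only the column that is subsequently ejected to position $i$, so no residual factor contaminates the surviving block---is exactly the point distinguishing this situation from Lemma~\ref{LemmaBlocksG(de,e,n)}. One cosmetic remark: the inline comment in Algorithm~\ref{Algo3} reading ``$w'[i,2]=1$'' after the step $w':=w's_c\cdots s_2 z^{-k}$ appears to be a leftover from Algorithm~\ref{Algo2}; as your own computation shows, the entry actually lands in column~$1$, consistent with the subsequent assignment $c:=1$.
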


\begin{definition}\label{DefinitionREG(d,1,n)}

Let $1 \leq i \leq n$. Let $w_i[i,c] \neq 0$ for $1 \leq c \leq i$.

\begin{itemize}

\item If $w_1 = \zeta_d^k$ for some $0 \leq k \leq d-1$ (this is equal to the nonzero entry on the first row of $w$), we define $R\!E_{1}(w)$ to be the word $\mathbf{z}^{k}$.	
\item If $w_{i}[i,c] =1$, we define $R\!E_{i}(w)$ to be the word\\
$\mathbf{s}_i \mathbf{s}_{i-1} \cdots \mathbf{s}_{c+1}$ (decreasing-index expression).
\item If $w_{i}[i,c] = \zeta_{d}^{k}$ with $k \neq 0$, we define $R\!E_{i}(w)$ to be the word\\
\begin{tabular}{ll}
			$\mathbf{s}_i \cdots \mathbf{s}_3 \mathbf{z}^k$ & if $c=1$,\\
			$\mathbf{s}_i \cdots \mathbf{s}_3 \mathbf{s}_2 \mathbf{z}^k \mathbf{s}_2 \mathbf{s}_3 \cdots \mathbf{s}_c$ & if $c \geq 2$.\\
			
\end{tabular}

\end{itemize}

\end{definition}

As for Algorithm \ref{Algo2}, the output of Algorithm \ref{Algo3} is equal to $R\!E_1(w) R\!E_2(w) \cdots R\!E_n(w)$. In Example \ref{ExampAlgoG(d,1,n)}, we have $R\!E(w) = \underset{R\!E_1(w)}{\underbrace{\mathbf{z}}} \hspace{0.2cm} \underset{R\!E_2(w)}{\underbrace{\mathbf{s}_2 \mathbf{z}^2 \mathbf{s}_2}} \hspace{0.2cm} \underset{R\!E_3(w)}{\underbrace{\mathbf{s}_3 \mathbf{s}_2 \mathbf{z}^2}}$.\\

The proof of the next proposition is similar to the proof of Proposition \ref{PropWordRepG(de,e,n)} and is left to the reader.

\begin{proposition}\label{PropWordRepG(d,1,n)}

Let $w \in G(d,1,n)$. The word $R\!E(w)$ given by Algorithm \ref{Algo3} is a word representative over $\mathbf{X}$ of $w \in G(d,1,n)$.

\end{proposition}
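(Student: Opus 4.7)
The plan is to adapt the proof of Proposition \ref{PropWordRepG(de,e,n)} almost verbatim. Let $w \in G(d,1,n)$. Algorithm \ref{Algo3} terminates after applying a finite sequence of right multiplications by elements of $X$ and by negative powers $z^{-k}$ (for $1 \leq k \leq d-1$), ultimately transforming $w$ into the identity matrix $I_n$. Writing this sequence as $y_1, y_2, \dots, y_r$, the matrix identity $w \, y_1 y_2 \cdots y_r = I_n$ yields $w = y_r^{-1} y_{r-1}^{-1} \cdots y_1^{-1}$.

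I then check that the word $R\!E(w)$ output by the algorithm, interpreted in $G(d,1,n)$ via the correspondence $\mathbf{s}_j \mapsto s_j$ and $\mathbf{z} \mapsto z$, equals exactly this product of inverses. Each $s_j$ is an involution, so $s_j^{-1} = s_j$; for each factor $z^{-k}$ with $1 \leq k \leq d-1$, the inverse $z^k$ is realized by the positive word $\mathbf{z}^k$, using that $\mathbf{z}$ has order $d$ in the group. Tracing the pseudocode of Algorithm \ref{Algo3}: when $k \neq 0$, the right multiplication by $s_c s_{c-1} \cdots s_2 z^{-k}$ is mirrored by prepending $\mathbf{z}^k \mathbf{s}_2 \cdots \mathbf{s}_c$ to $R\!E(w)$, and the subsequent right multiplication by $s_{c+1} \cdots s_i$ is mirrored by prepending $\mathbf{s}_i \cdots \mathbf{s}_{c+1}$. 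In each case, the prepended word is the formal inverse (read in the positive monoid) of the matrix expression by which one just right-multiplied, and Convention \ref{ConventionDecreaseIncreaseIndex} harmlessly handles the boundary cases $c = 1$ and $c = i$.

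Because prepending reverses the chronological order of the multiplications, the accumulated word $R\!E(w)$ evaluates in $G(d,1,n)$ to $y_r^{-1} \cdots y_1^{-1} = w$, which is precisely the claim. The argument is routine bookkeeping and presents no genuine obstacle; the one mild point of care, by contrast with the proof of Proposition \ref{PropWordRepG(de,e,n)}, is that powers of $\mathbf{z}$ are now inserted inside $R\!E(w)$ at every iteration where $k \neq 0$, rather than appearing only as a single prefix at the very end. One uses the group relation $\mathbf{z}^d = 1$ to rewrite each inverse $(\mathbf{z}^{-k})^{-1}$ as the positive word $\mathbf{z}^k$, so that the output of the algorithm is indeed a word over $\mathbf{X}$.
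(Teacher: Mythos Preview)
Your proof is correct and follows exactly the approach the paper intends: it explicitly says the proof is similar to that of Proposition \ref{PropWordRepG(de,e,n)} and leaves it to the reader. You have faithfully carried out that adaptation, correctly noting the one genuine difference—that powers of $\mathbf{z}$ are interleaved throughout $R\!E(w)$ rather than appearing only as a single leftmost prefix—and handling it via $s_j^{-1}=s_j$ and $(z^{-k})^{-1}=z^k$.
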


The following proposition enables us to prove that the output of Algorithm \ref{Algo3} is a reduced expression over $\mathbf{X}$ of a given element $w \in G(d,1,n)$.

\begin{proposition}\label{prop.lengthcompG(d,1,n)}

Let $w$ be an element of $G(d,1,n)$. For all $x \in X$, we have $$\boldsymbol{\ell}(R\!E(xw)) \leq \boldsymbol{\ell}(R\!E(w)) + 1.$$

\end{proposition}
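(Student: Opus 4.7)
The strategy is to mirror the case-by-case analysis used for Proposition \ref{prop.lengthcompG(de,e,n)}, adapting it to the generators of $G(d,1,n)$ and to the description of $R\!E_i(w)$ given in Definition \ref{DefinitionREG(d,1,n)}. Accordingly, I split the argument into three cases depending on whether $x = s_i$ for some $3 \leq i \leq n$, $x = s_2$, or $x = z$, and in each case I identify which of the factors $R\!E_j(w)$ are altered by left multiplication by $x$.

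For $x = s_i$ with $i \geq 3$, left multiplication swaps rows $i-1$ and $i$ and leaves all other rows, in particular row $1$, untouched. By Lemma \ref{LemmaBlocksG(d,1,n)} and Definition \ref{DefinitionREG(d,1,n)}, this gives $R\!E_1(xw) = R\!E_1(w)$ together with $R\!E_j(xw) = R\!E_j(w)$ for $j \notin \{i-1, i\}$, so only $R\!E_{i-1}(w) R\!E_i(w)$ versus $R\!E_{i-1}(xw) R\!E_i(xw)$ needs to be compared. Letting $a_{i-1}, a_i$ denote the nonzero entries on rows $i-1, i$ at columns $c_{i-1}, c_i$, I treat the subcase $c_{i-1} < c_i$ by enumerating the four possibilities $a_i = 1$ or $a_i \neq 1$ crossed with $b_{i-1} = 1$ or $b_{i-1} \neq 1$ (writing $b_{i-1}$ instead of $a_{i-1}$ to account for the possible rescaling discussed in Lemma \ref{LemmaBlocksG(d,1,n)}), and in each case explicitly computing both lengths from Definition \ref{DefinitionREG(d,1,n)}. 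The outcome is that $\boldsymbol{\ell}(R\!E(s_iw)) = \boldsymbol{\ell}(R\!E(w)) + 1$ when $a_i = 1$ and $\boldsymbol{\ell}(R\!E(s_iw)) = \boldsymbol{\ell}(R\!E(w)) - 1$ when $a_i \neq 1$, in exact parallel with Case 1 of the proof of Proposition \ref{prop.lengthcompG(de,e,n)}. The subcase $c_{i-1} > c_i$ is then deduced by applying $s_i$ once more and using $s_i^2 = 1$.

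For $x = s_2$, left multiplication swaps rows $1$ and $2$, so rows $3, \ldots, n$ are unchanged and the comparison reduces to $\boldsymbol{\ell}(R\!E_1(w) R\!E_2(w))$ versus $\boldsymbol{\ell}(R\!E_1(xw) R\!E_2(xw))$. I split according to $c_1 < c_2$ and $c_1 > c_2$, and within each according to whether the relevant entry on row $2$ equals $1$ or $\zeta_d^{k'}$ with $k' \neq 0$, tabulating the lengths using the formula $R\!E_1(w) = \mathbf{z}^k$ and the three-branch description of $R\!E_2(w)$ from Definition \ref{DefinitionREG(d,1,n)}. For $x = z$, only row $1$ is affected: the nonzero entry on row $1$ is multiplied by $\zeta_d$, hence $R\!E_j(zw) = R\!E_j(w)$ for all $j \geq 2$ while $R\!E_1(zw) = \mathbf{z}^{(k+1) \bmod d}$. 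When $k \neq d-1$ the length increases by exactly $1$, and when $k = d-1$ it drops by $d-1$; both cases satisfy the required inequality.

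The main obstacle, compared with the $G(de,e,n)$ argument, is that the factor $\mathbf{z}^k$ appearing in $R\!E_i(w)$ has variable length $k \in \{0, \ldots, d-1\}$ rather than the uniform length $2$ of the factor $\mathbf{t}_k \mathbf{t}_0$ used in $G(de,e,n)$, and the exponent $k$ can itself shift when columns are permuted under left multiplication by $s_2$. The careful bookkeeping of how $k$ moves under these operations, together with the verification that the wraparound $d-1 \mapsto 0$ produced by $z$ still respects the bound $\boldsymbol{\ell}(R\!E(xw)) \leq \boldsymbol{\ell}(R\!E(w)) + 1$, is the most delicate part of the argument.
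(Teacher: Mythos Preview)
Your proposal is correct and follows essentially the same route as the paper, which simply refers back to Case~1 (for all $s_i$, $2 \leq i \leq n$) and Case~3 (for $z$) of the proof of Proposition~\ref{prop.lengthcompG(de,e,n)}. Two small points are worth noting. First, there is no rescaling in Lemma~\ref{LemmaBlocksG(d,1,n)}: unlike Lemma~\ref{LemmaBlocksG(de,e,n)}, the block $w_{i-1}$ here is obtained purely by deleting a row and a column, so the distinction between $a_{i-1}$ and $b_{i-1}$ you invoke is unnecessary in the $G(d,1,n)$ setting. Second, the paper does not single out $s_2$; it folds it into the generic $s_i$ computation, and your worry about the variable length of $\mathbf{z}^k$ is in fact harmless, since under left multiplication by $s_i$ the exponents $k$ and $k'$ merely exchange positions between $R\!E_{i-1}$ and $R\!E_i$, leaving the total length shifted by exactly $\pm 1$ in every subcase.
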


\begin{proof}

Consider $x = s_i$ with $2 \leq i \leq n$. This case is done in the same way as Case 1 in the proof of Proposition \ref{prop.lengthcompG(de,e,n)}. Consider now $x = z$. This case is done this time as Case 3 of the proof of Proposition \ref{prop.lengthcompG(de,e,n)}.

\end{proof}

Applying the arguments used before in the proof of Proposition \ref{PropREwRedExp}, we deduce that $R\!E(w)$ is a reduced expression over $\mathbf{X}$ of $w \in G(d,1,n)$. Hence \mbox{Algorithm \ref{Algo3}} produces geodesic normal forms for $G(d,1,n)$.\\

As a direct application, the next statement characterizes the elements of $G(d,1,n)$ that are of maximal length. Note that this statement was also observed in \cite{BremkeMalle}.

\begin{proposition}\label{PropMaxLengthG(d,1,n)}

Let $d > 1$ and $n \geq 2$. There exists a unique element of maximal length of $G(d,1,n)$. Its minimal word representative is of the form $$\mathbf{z}^{d-1} (\mathbf{s}_2 \mathbf{z}^{d-1}\mathbf{s}_2) (\mathbf{s}_3\mathbf{s}_2\mathbf{z}^{d-1}\mathbf{s}_2\mathbf{s}_3) \cdots (\mathbf{s}_n \cdots \mathbf{s}_2 \mathbf{z}^{d-1} \mathbf{s}_2 \cdots \mathbf{s}_n).$$ Its length is then equal to $n(n+d-2)$.

\end{proposition}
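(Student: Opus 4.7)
The strategy is to use the fact, established via Propositions \ref{PropWordRepG(d,1,n)} and \ref{prop.lengthcompG(d,1,n)}, that $R\!E(w) = R\!E_1(w) R\!E_2(w) \cdots R\!E_n(w)$ is a geodesic normal form of any $w \in G(d,1,n)$, and in particular $\ell(w) = \sum_{i=1}^n \boldsymbol{\ell}(R\!E_i(w))$. Maximizing $\ell(w)$ over $G(d,1,n)$ then reduces to maximizing each block length separately and checking that the pointwise optima can be attained simultaneously by a single matrix.

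The first step is to extract the block lengths from Definition \ref{DefinitionREG(d,1,n)}. At step $i \geq 2$, writing $w_i[i,c] = \zeta_d^{k_i}$, the three cases of the definition give $\boldsymbol{\ell}(R\!E_i(w))$ equal to $i - c$, $i + k_i - 2$, or $i + c + k_i - 2$, according as $k_i = 0$, or $k_i \neq 0$ with $c = 1$, or $k_i \neq 0$ with $c \geq 2$. Maximizing over $1 \leq c \leq i$ and $0 \leq k_i \leq d - 1$, the third case strictly dominates the other two for $i \geq 2$ and yields a unique maximum $2i + d - 3$, attained only when $c = i$ and $k_i = d - 1$. For $i = 1$, the length $\boldsymbol{\ell}(R\!E_1(w)) = k_1$ is uniquely maximized at $k_1 = d - 1$, giving $d-1$.

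Finally, one must show that these local maxima are jointly realizable by a single element of $G(d,1,n)$. By Lemma \ref{LemmaBlocksG(d,1,n)}, the block $w_{i-1}$ is obtained from $w_i$ by deleting row $i$ and column $c$, so enforcing $c = i$ at every step from $i = n$ down to $i = 2$ forces the nonzero entry of row $i$ of $w$ to lie at column $i$ for all such $i$; bijectivity of the underlying permutation then forces $c_1 = 1$, so $w$ is diagonal. The $k_i$-constraints then force every diagonal entry to be $\zeta_d^{d-1}$, which singles out the scalar matrix $\zeta_d^{d-1} I_n \in G(d,1,n)$ as the unique element of maximal length. Substituting $c = i$ and $k_i = d - 1$ into Definition \ref{DefinitionREG(d,1,n)} gives $R\!E_i(w) = \mathbf{s}_i \cdots \mathbf{s}_2 \mathbf{z}^{d-1} \mathbf{s}_2 \cdots \mathbf{s}_i$ for $i \geq 2$ together with $R\!E_1(w) = \mathbf{z}^{d-1}$, producing the claimed reduced expression, and a routine arithmetic check yields $(d-1) + \sum_{i=2}^n (2i + d - 3) = n(n + d - 2)$. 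The only step that requires real attention is the compatibility argument via Lemma \ref{LemmaBlocksG(d,1,n)}; the rest is case-by-case bookkeeping on the three branches of Definition \ref{DefinitionREG(d,1,n)}.
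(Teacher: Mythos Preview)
Your proof is correct and matches the paper's approach, which states the proposition as a direct consequence of Algorithm~\ref{Algo3} producing reduced expressions without giving further detail; your block-by-block maximization via Definition~\ref{DefinitionREG(d,1,n)} and the compatibility check via Lemma~\ref{LemmaBlocksG(d,1,n)} is exactly how one unpacks that claim. One harmless slip: for $c=1$, $k_i\neq 0$ the word is actually $\mathbf{s}_i\cdots\mathbf{s}_2\mathbf{z}^{k_i}$ (cf.\ Example~\ref{ExampAlgoG(d,1,n)}; the $\mathbf{s}_3$ in Definition~\ref{DefinitionREG(d,1,n)} is a typo), so that branch has length $i+k_i-1$ rather than $i+k_i-2$, but this does not affect your domination argument or the final count.
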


\begin{remark}

When $d=2$, the group $G(2,1,n)$ is the Coxeter group of type $B_n$. By Proposition \ref{PropMaxLengthG(d,1,n)}, the longest element is of the form $$\mathbf{z} (\mathbf{s}_2 \mathbf{z}\mathbf{s}_2) (\mathbf{s}_3\mathbf{s}_2\mathbf{z}\mathbf{s}_2\mathbf{s}_3) \cdots (\mathbf{s}_n \cdots \mathbf{s}_2 \mathbf{z} \mathbf{s}_2 \cdots \mathbf{s}_n).$$ Its length is equal to $n^2$ which is already known for Coxeter groups of type $B_n$, see Example 1.4.6 of \cite{GeckPfeifferBook}.

\end{remark}

\section{The Hecke algebras \texorpdfstring{$H(de,e,n)$}{TEXT}}\label{SectionHeckeAlgebras}

The Hecke algebras $H(de,e,n)$ attached to the general series of complex reflection groups $G(de,e,n)$ are defined as quotients of the corresponding complex braid group algebras by some polynomial relations. Let $B(de,e,n)$ denotes the complex braid group attached to $G(de,e,n)$, as defined in \cite{BMR}. We establish nice presentations for the Hecke algebras $H(de,e,n)$ by using the presentations of Corran-Picantin \cite{CorranPicantin} and Corran-Lee-Lee \cite{CorranLeeLee} of the complex braid groups $B(e,e,n)$ and $B(de,e,n)$ for $d>1$, respectively.

\subsection{The Hecke algebras $H(e,e,n)$}

Corran-Picantin introduced in \cite{CorranPicantin} a presentation for the complex braid groups $B(e,e,n)$. Although we are
not going to use this result, we mention that they also established nice combinatorial structures (called Garside structures) for these groups. The presentation of Corran-Picantin is as follows.

\begin{definition}\label{DefPresB(e,e,n)CorranPicantin}

Let $e \geq 1$ and $n \geq 2$. The group $B(e,e,n)$ is defined by a presentation with set of generators: $\{t_i\ |\ i \in \mathbb{Z}/e\mathbb{Z}\}$ $\cup \{s_3, s_4, \cdots, s_n \}$ and relations:

\begin{enumerate}

\item $t_i t_{i-1} = t_j t_{j-1}$ for $i, j \in \mathbb{Z}/e\mathbb{Z}$,
\item $t_i s_3 t_i = s_3 t_i s_3$ for $i \in \mathbb{Z}/e\mathbb{Z}$,
\item $s_j t_i = t_i s_j$ for $i \in \mathbb{Z}/e\mathbb{Z}$ and $4 \leq j \leq n$,
\item $s_i s_{i+1} s_i = s_{i+1} s_i s_{i+1}$ for $3 \leq i \leq n-1$,
\item $s_i s_j = s_j s_i$ for $|i-j| > 1$.

\end{enumerate}

\end{definition}

Adding the quadratic relations to all the generators, we get the presentation of Corran-Picantin of $G(e,e,n)$ given earlier in Definition \ref{DefPresCorranPicantinG(e,e,n)}. Similar to the diagram of Figure \ref{FigureDiagramCPG(e,e,n)}, the diagram that describes the presentation of Corran-Picantin of $B(e,e,n)$ is the following. 

\begin{figure}[H]
\begin{center}
\begin{tikzpicture}[yscale=0.8,xscale=1,rotate=30]

\draw[thick,dashed] (0,0) ellipse (2cm and 1cm);

\node[draw, shape=circle, fill=white, label=above:\begin{small}$t_0$\end{small}] (t0) at (0,-1) {};
\node[draw, shape=circle, fill=white, label=above:\begin{small}$t_1$\end{small}] (t1) at (1,-0.8) {};
\node[draw, shape=circle, fill=white, label=right:\begin{small}$t_2$\end{small}] (t2) at (2,0) {};
\node[draw, shape=circle, fill=white, label=above:$t_i$] (ti) at (0,1) {};
\node[draw, shape=circle, fill=white, label=above:\begin{small}$t_{e-1}$\end{small}] (te-1) at (-1,-0.8) {};

\draw[thick,-] (0,-2) arc (-180:-90:3);

\node[draw, shape=circle, fill=white, label=below left:$s_3$] (s3) at (0,-2) {};

\draw[thick,-] (t0) to (s3);
\draw[thick,-,bend left] (t1) to (s3);
\draw[thick,-,bend left] (t2) to (s3);
\draw[thick,-,bend left] (s3) to (te-1);

\node[draw, shape=circle, fill=white, label=below:$s_4$] (s4) at (0.15,-3) {};
\node[draw, shape=circle, fill=white, label=below:$s_{n-1}$] (sn-1) at (2.2,-4.9) {};
\node[draw, shape=circle, fill=white, label=right:$s_{n}$] (sn) at (3,-5) {};

\node[fill=white] () at (1,-4.285) {$\cdots$};

\end{tikzpicture}
\end{center}
\caption{\mbox{Diagram for the presentation of Corran-Picantin of $B(e,e,n)$.}}
\end{figure}

The next definition establishes a presentation of the Hecke algebra $H(e,e,n)$ attached to the group $G(e,e,n)$, by using the presentation of Corran-Picantin of the complex braid group $B(e,e,n)$.

\begin{definition}\label{DefPresH(e,e,n)}

Let $e \geq 1$ and $n \geq 2$. We exclude the case ($n=2$, $e$ even), see Remark \ref{RemH(e,e,2)} below. Let $R_0 = \mathbb{Z}[a]$. The unitary associative Hecke algebra $H(e,e,n)$ is defined as the quotient of the group algebra $R_0(B(e,e,n))$ by the following relations:

\begin{enumerate}

\item $t_i^2 - at_i - 1 = 0$ for $i \in \mathbb{Z}/e\mathbb{Z}$,
\item $s_j^2 - as_j - 1 = 0$ for $3 \leq j \leq n$,

\end{enumerate}
where $\{t_i\ |\ i \in \mathbb{Z}/e\mathbb{Z}\} \cup \{s_j\ |\ 3 \leq j \leq n\}$ is the set of generators of the presentation of Corran-Picantin of $B(e,e,n)$. Then, a presentation of $H(e,e,n)$ is obtained by adding these relations to those of the presentation of Corran-Picantin given in Definition \ref{DefPresB(e,e,n)CorranPicantin}.

\end{definition}

\begin{remark}\label{RemH(e,e,2)}

For the case ($n=2$, $e$ even), there exists two conjugacy classes of the reflections $t_i$, for $i \in \mathbb{Z}/e\mathbb{Z}$ in the complex reflection group. In this case, we define the Hecke algebra $H(e,e,2)$ in the same way as in Definition \ref{DefPresH(e,e,n)} over $R_0 = \mathbb{Z}[a_1,a_2]$ with two types of polynomial relations for each conjugacy class of the $t_i$'s: $t_i^2 - a_1t_i - 1 = 0$ for the first conjugacy class and $t_j^2 - a_2t_j - 1 = 0$ for the second.

\end{remark}

Note that we use the polynomial ring $R_0$ instead of the usual Laurent polynomial ring $R$ introduced in Definition \ref{HeckeBMR} and we use normalized polynomial relations in the definition of $H(e,e,n)$. Actually, by a result of Marin (see Proposition 2.3 in \cite{MarinG20G21}) applied to the case of $G(e,e,n)$, the BMR freeness conjecture for this case is equivalent to the fact that $H(e,e,n)$ is a free $R_0$-module of rank equal to the order of $G(e,e,n)$. We will also use a polynomial ring and normalized relations in the definition of the Hecke algebras attached to all the groups $G(de,e,n)$ in the next subsection.

\subsection{The general case}

Corran-Lee-Lee \cite{CorranLeeLee} established a presentation for the complex braid group $B(de,e,n)$ that give rise to nice combinatorial structures (called quasi-Garside structures) for these groups. The presentation is defined as follows.

\begin{definition}\label{DefPresB(de,e,n)CorranLeeLee}

Let $d >1$, $e \geq 1$ and $n \geq 2$. The group $B(de,e,n)$ is defined by a presentation with set of generators: $\{z\} \cup \{t_i\ |\ i \in \mathbb{Z}\} \cup \{s_3, s_4, \cdots, s_n \}$ and relations:

\begin{enumerate}

\item $z t_i = t_{i-e} z$ for $i \in \mathbb{Z}$,
\item $z  s_j=s_j z$ for $3 \leq j \leq n$,
\item $t_i t_{i-1} = t_j t_{j-1}$ for $i, j \in \mathbb{Z}$,
\item $t_i s_3 t_i = s_3 t_i s_3$ for $i \in \mathbb{Z}$,
\item $s_j t_i = t_i s_j$ for $i \in \mathbb{Z}$ and $4 \leq j \leq n$,
\item $s_i s_{i+1} s_i = s_{i+1} s_i s_{i+1}$ for $3 \leq i \leq n-1$,
\item $s_i s_j = s_j s_i$ for $|i-j| > 1$.

\end{enumerate}

\end{definition}

Note that $B(de,e,n)$ is isomorphic to $B(2e,e,n)$ for $d > 1$. The parameter $d$ makes an appearance when it comes to the complex reflection group $G(de,e,n)$. Adding the relation $z^d=1$ and the quadratic relations to all the other generators of the presentation of Corran-Lee-Lee of $B(de,e,n)$, we obtain the presentation of $G(de,e,n)$ given earlier in Definition \ref{DefCorranLeePresG(de,e,n)}. Actually, with the additional relation $z^d=1$, we have $t_{i+de} = z^dt_{i+de} = t_iz^d = t_i$ for all $i \in \mathbb{Z}$.\\

Corran-Lee-Lee proposed in \cite{CorranLeeLee} the following diagram to describe their presentation of $B(de,e,n)$. The nodes are the generators of the presentation. Relation 1 of definition \ref{DefPresB(de,e,n)CorranLeeLee} is described by the curved arrow. Relation 3 is described by the vertical line such that the nodes $t_i$'s are tangent to this line. All the other edges follow the standard conventions of type $A_{n-1}$ Artin groups.

\begin{figure}[H]
$$\begin{xy}
(-2,8.7); (-2,-8.3) **\crv{(-9,0)} ?(0.8) *!/^2mm/{} ?>*@{>};
(-7.5,0) *++={\rule{0pt}{4pt}} *\frm{o};
(-11,0)   *++={z} ;
(1.8,-25) *++={\vdots};
(1.8, 25) *++={\vdots} **@{-};
(4, 16) *++={\rule{0pt}{4pt}} *\frm{o};
    (20,0) *++={\rule{0pt}{4pt}} *\frm{o} **@{-};
(4,-16) *++={\rule{0pt}{4pt}} *\frm{o};
    (20,0) *++={\rule{0pt}{4pt}} *\frm{o} **@{-};
(4, 8) *++={\rule{0pt}{4pt}} *\frm{o};
    (20,0) *++={\rule{0pt}{4pt}} *\frm{o} **@{-};
(4,-8) *++={\rule{0pt}{4pt}} *\frm{o};
    (20,0) *++={\rule{0pt}{4pt}} *\frm{o} **@{-};
(4, 0) *++={\rule{0pt}{4pt}} *\frm{o};
    (20,0) *++={\rule{0pt}{4pt}} *\frm{o} **@{-};
(30, 0) *++={\rule{0pt}{4pt}} *\frm{o} **@{-};
(40, 0) *++={\rule{0pt}{4pt}} *\frm{o} **@{-};
(50, 0) *++={\dots}  **@{-};
(60, 0) *++={\rule{0pt}{4pt}} *\frm{o} **@{-};
(0,-18) *++={};
(6,12) *++={t_2};
(6, 4) *++={t_1};
(6,-4) *++={t_0};
(7,-12)*++={t_{-1}};
(7,-20)*++={t_{-2}};
(23,-3) *++={s_3};
(33,-3) *++={s_4};
(43,-3) *++={s_5};
(63,-3) *++={s_n}
\end{xy}$$
\caption{Diagram for the presentation of Corran-Lee-Lee of $B(de,e,n)$.}\label{DiagramPresCorranLeeLeeB(de,e,n)}
\end{figure}
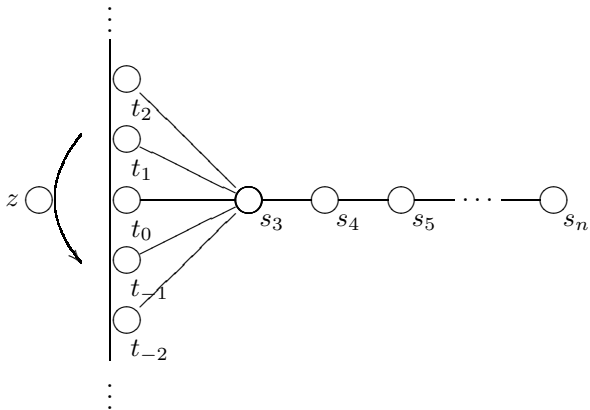

Note that for $e=1$, it is readily checked that the presentation of Corran-Lee-Lee is equivalent to the classical presentation of $B(d,1,n)$ that is isomorphic to $B(2,1,n)$ for $d > 1$. Note that we replace $t_0$ by $s_2$ in the set of generators. The (well-known) diagram that describes the classical presentation of $B(d,1,n)$ is the following.

\begin{figure}[H]

\begin{center}
\begin{tikzpicture}

\node[draw, shape=circle, label=below:$z$] (1) at (0,0) {};
\node[draw, shape=circle,label=below:$s_2$] (2) at (2,0) {};
\node[draw, shape=circle,label=below:$s_3$] (3) at (4,0) {};
\node[draw, shape=circle,label=below:$s_{n-1}$] (n-1) at (6,0) {};
\node[draw,shape=circle,label=below:$s_n$] (n) at (8,0) {};

\draw[thick,-,double] (1) to (2);
\draw[thick,-] (2) to (3);
\draw[dashed,-,thick] (3) to (n-1);
\draw[thick,-] (n-1) to (n);

\end{tikzpicture}
\end{center}

\end{figure}

We are ready to establish a presentation of the Hecke algebra $H(de,e,n)$ attached to the group $G(de,e,n)$, by using the presentation of Corran-Lee-Lee of the complex braid group $B(de,e,n)$. Similarly to the case of $H(e,e,n)$, we also define the Hecke algebra $H(de,e,n)$ over a polynomial ring $R_0$ and use normalized polynomial relations.

\begin{definition}\label{DefH(de,e,n)}

Let $d >1$, $e \geq 1$ and $n \geq 2$. We exclude the case ($n=2$, $e$ even), see Remark \ref{RemH(de,e,2)} below. Let $R_0 = \mathbb{Z}[a,b_1,b_2, \cdots, b_{d-1}]$. The unitary associative Hecke algebra $H(de,e,n)$ is defined as the quotient of the group algebra $R_0(B(de,e,n))$ by the following relations:

\begin{enumerate}

\item $z^d -b_1z^{d-1} - b_2 z^{d-2} - \cdots - b_{d-1}z - 1 = 0$,
\item $t_i^2 - at_i - 1 = 0$ for $i \in \mathbb{Z}$,
\item $s_j^2 - as_j - 1 = 0$ for $3 \leq j \leq n$,

\end{enumerate}
where $\{z\} \cup \{t_i\ |\ i \in \mathbb{Z}\} \cup \{s_j\ |\ 3 \leq j \leq n\}$ is the set of generators of the presentation of Corran-Lee-Lee of $B(de,e,n)$. Then, a presentation of $H(de,e,n)$ is obtained by adding these relations to those given in Definition \ref{DefPresB(de,e,n)CorranLeeLee}.

\end{definition}

\begin{remark}\label{RemH(de,e,2)}

When ($n=2$, $e$ even), the Hecke algebra $H(de,e,2)$ can be defined over $R_0[a_1,a_2,b_1,b_2, \cdots , b_{d-1}]$ in the same way as in the previous definition, but with two types of polynomial relations for the $t_i$'s $($due to the existence of two conjugacy classes of the $t_i$'s in $G(de,e,2))$, as established before in Remark \ref{RemH(e,e,2)}.

\end{remark}

\section{Bases for the Hecke algebras $H(e,e,n)$}\label{SectionBasisHecke}

The Hecke algebra $H(e,e,n)$ is described in Definition \ref{DefPresH(e,e,n)} by a presentation with generating set $\{ t_0, t_1, \cdots, t_{e-1}, s_3, \cdots, s_n \}$. It is defined over $R_0 = \mathbb{Z}[a]$. Note that we will replace $t_0$ by $s_2$ in some cases in order to simplify notations. Using the geodesic normal forms of $G(e,e,n)$ introduced in Section \ref{SectionReducedWordsG(de,e,n)}, we construct a natural basis for $H(e,e,n)$ that is different from the one introduced by Ariki in \cite{ArikiHecke}.\\

Let us define the following subsets of $H(e,e,n)$:

\begin{center}
\begin{tabular}{llll}
$\Lambda_2$ & $=$ & $\{1,$\\
& & $\ \ t_k$ & for $0 \leq k \leq e-1$,\\
& & $\ \ t_kt_0$ & for $1 \leq k \leq e-1 \}$,
\end{tabular}
\end{center}
and for $3 \leq i \leq n$,
\begin{center}
\begin{tabular}{llll}
$\Lambda_i$ & $=$ & $\{1$,\\
 & & $\ \ s_i \cdots s_{i'}$ & for $3 \leq i' \leq i$,\\ 
 & & $\ \ s_i \cdots s_{3}t_k$ & for $0 \leq k \leq e-1$,\\
 & & $\ \ s_i \cdots s_{3}t_ks_2 \cdots s_{i'}$ & for $1 \leq k \leq e-1$ and $2 \leq i' \leq i\}.$\\
\end{tabular}
\end{center}

\noindent Define $\Lambda = \Lambda_2 \cdots \Lambda_n$ to be the set of the products $a_2 \cdots a_n$, where $a_2 \in \Lambda_2, \cdots, a_n \in \Lambda_n$. Remark that this set corresponds to all the reduced words $R\!E(w)$ of the form $R\!E_2(w) R\!E_3(w) \cdots R\!E_n(w)$ introduced in Section 3 (see Definition \ref{DefREiwG(de,e,n)} and Remark \ref{GeodesicFormsForG(e,e,n)}). Recall that $R_0 = \mathbb{Z}[a]$ (see Definition \ref{DefPresH(e,e,n)}). The aim of this section is to establish the following.

\begin{theorem}\label{TheoremNewBasis}

The set $\Lambda$ provides an $R_0$-basis of the Hecke algebra $H(e,e,n)$.

\end{theorem}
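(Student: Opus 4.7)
Let $M \subseteq H(e,e,n)$ denote the $R_0$-submodule spanned by $\Lambda$. The plan is to show (i) $M = H(e,e,n)$, i.e., $\Lambda$ spans, and (ii) $|\Lambda| = |G(e,e,n)|$, and then to conclude by invoking Theorem \ref{PropositionBMRFreenessTheorem}. Point (ii) follows from the construction: the set $\Lambda_2\Lambda_3\cdots\Lambda_n$ is in bijection with the factored normal forms $R\!E_2(w)R\!E_3(w)\cdots R\!E_n(w)$ of Remark \ref{GeodesicFormsForG(e,e,n)}, and a direct count gives $|\Lambda_i|=ei$ for $i\geq 3$ and $|\Lambda_2|=2e$, yielding $|\Lambda|=e^{n-1}n!=|G(e,e,n)|$.

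For (i), since $1\in\Lambda$ and $H(e,e,n)$ is generated as an $R_0$-algebra by $X=\{t_0,\ldots,t_{e-1},s_3,\ldots,s_n\}$, it suffices to prove that $xM\subseteq M$ for every $x\in X$. I would argue by induction on $n$, with the base case $n=2$ being a direct check in the dihedral Hecke algebra $H(e,e,2)$, where $\Lambda=\Lambda_2$ has $2e$ elements and the quadratic relations together with $t_it_{i-1}=t_jt_{j-1}$ immediately produce the required closure. For the inductive step, let $H'\subseteq H(e,e,n)$ be the subalgebra generated by $\{t_0,\ldots,t_{e-1},s_3,\ldots,s_{n-1}\}$. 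By induction $\Lambda_2\cdots\Lambda_{n-1}$ spans $H'$ as an $R_0$-module, so it is enough to establish
$$H(e,e,n)=\sum_{a_n\in\Lambda_n}H'\cdot a_n,$$
which in turn reduces to showing this right-hand side is closed under left multiplication by each element of $X$.

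Closure under left multiplication by any generator in $X\setminus\{s_n\}$ is automatic (such generators lie in $H'$), so the nontrivial task is closure under $s_n$. For each $a_n\in\Lambda_n$, the product $s_n\cdot a_n$ must be rewritten as a sum $\sum h'_j\,a_n^{(j)}$ with $h'_j\in H'$ and $a_n^{(j)}\in\Lambda_n$. The key observation is that the case-by-case analysis in Proposition \ref{prop.lengthcompG(de,e,n)} lifts verbatim to the Hecke algebra: when $s_n$ acts on $a_n$ in a length-increasing way, the braid relations (Relations 2--5 of Definition \ref{DefPresCorranPicantinG(e,e,n)}) permit one to absorb $s_n$ and produce another element of $\Lambda_n$ multiplied on the left by an element of $H'$; when the action is length-decreasing, the quadratic relation $s_n^2=as_n+1$ is used exactly once to produce a linear combination of two shorter words, which by a secondary induction on length are already in $\sum_{a_n}H'\cdot a_n$. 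Each normal-form prefix $s_n\cdots s_{i'}$ or $s_n\cdots s_3 t_k s_2\cdots s_{i'}$ of $\Lambda_n$ must be handled; the commutation $s_n s_j=s_j s_n$ for $j\leq n-2$ handles most of the word, and the only nontrivial interaction is with $s_{n-1}$, where the braid relation $s_ns_{n-1}s_n=s_{n-1}s_ns_{n-1}$ of Relation 4 of Definition \ref{DefPresCorranPicantinG(e,e,n)} produces the required rewriting.

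Once spanning is established, the conclusion is immediate. By Theorem \ref{PropositionBMRFreenessTheorem} (known for $G(e,e,n)$ through Ariki's work and the identification in Appendix A.2 of \cite{RostamArikiBMR}), $H(e,e,n)$ is a free $R_0$-module of rank $|G(e,e,n)|=|\Lambda|$; any $R_0$-linear surjection $R_0^{|\Lambda|}\twoheadrightarrow H(e,e,n)$ between free modules of the same finite rank over the commutative ring $R_0$ is an isomorphism, so $\Lambda$ is in fact a free basis. The main obstacle is the case analysis of the previous paragraph: although it is a direct translation of Proposition \ref{prop.lengthcompG(de,e,n)}, one must carefully track the interaction of $s_n$ with each of the four structural types making up $\Lambda_n$, particularly the longest form $s_n\cdots s_3 t_k s_2\cdots s_{i'}$, and verify that every use of the quadratic relation produces only elements already known to lie in $M$ by the secondary induction on length.
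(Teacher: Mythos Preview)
Your overall strategy---count $|\Lambda|$, prove spanning by induction on $n$, and deduce freeness---matches the paper's. However, there is a genuine gap in your inductive step. You claim that to show $\sum_{a_n\in\Lambda_n}H'\cdot a_n$ is closed under left multiplication by $s_n$, it suffices to rewrite $s_n\cdot a_n$ for each $a_n\in\Lambda_n$. This would be correct if $s_n$ commuted with $H'$, but it does not: $s_n$ fails to commute with $s_{n-1}\in H'$. So from $s_n a_n\in H'\Lambda_n$ alone you cannot conclude $s_n(h'a_n)\in H'\Lambda_n$ for arbitrary $h'\in H'$. You seem aware of this (``the only nontrivial interaction is with $s_{n-1}$''), but your formulation of the reduction is incorrect. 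The paper resolves this exactly as one would expect: since $s_n$ commutes with everything in $\Lambda_2,\ldots,\Lambda_{n-2}$, one has $s_n(a_2\cdots a_n)=a_2\cdots a_{n-2}\cdot s_n(a_{n-1}a_n)$, and the real work is a case analysis showing $s_n(a_{n-1}a_n)\in\mathrm{Span}(S_{n-1}^*\Lambda_n)$ for every pair $(a_{n-1},a_n)\in\Lambda_{n-1}\times\Lambda_n$. This is nine lemmas (Lemmas~\ref{Lemma2}--\ref{Lemma10}), not a computation of $s_n a_n$ alone; in particular the interaction of the $t_k$'s inside $a_{n-1}$ with those inside $a_n$ requires the preliminary identity of Lemma~\ref{LemmaTLTKInVect} ($t_jt_i\in\mathrm{Span}(\Lambda_2)$), which does not follow from Proposition~\ref{prop.lengthcompG(de,e,n)} ``verbatim''.

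A second, smaller point: you conclude linear independence by invoking Theorem~\ref{PropositionBMRFreenessTheorem} (BMR freeness for $G(e,e,n)$ via Ariki). This is logically valid for proving Theorem~\ref{TheoremNewBasis} as stated, but it is circular relative to the paper's aim of giving a \emph{new} proof of the BMR freeness conjecture. The paper instead appeals to Proposition~2.3(i) of \cite{MarinG20G21}, which says directly (without assuming freeness) that any spanning set of $|G(e,e,n)|$ elements is a basis; this is what allows the paper to recover BMR freeness as a corollary rather than an input.
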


In order to prove this theorem, it is shown in Proposition 2.3$(i)$ of \cite{MarinG20G21} that it is enough to find a spanning set of $H(e,e,n)$ over $R_0$ of $|G(e,e,n)|$ elements. This is a general fact about Hecke algebras associated to complex reflection groups. We have $|\Lambda_2| = 2e$, $|\Lambda_3| = 3e$, $\cdots$, and $|\Lambda_n| = ne$ by the definition of $\Lambda_2$, $\cdots$, and $\Lambda_n$. Thus, $|\Lambda|$ is equal to $e^{n-1}n!$ that is the order of $G(e,e,n)$. If we manage to prove that $\Lambda$ is a spanning set of $H(e,e,n)$ over $R_0$, then we get Theorem \ref{TheoremNewBasis}. Denote by Span$(S)$ the sub-$R_0$-module of $H(e,e,n)$ generated by $S$.\\

We prove Theorem \ref{TheoremNewBasis} by induction on $n \geq 2$. Propositions \ref{PropInductionHypothesis} and \ref{PropCasen=3} below correspond to the cases $n=2$ and $n=3$, respectively. Suppose that $\Lambda_2 \cdots \Lambda_{n-1}$ is an $R_0$-basis of $H(e,e,n-1)$. As mentioned before, in order to prove that $\Lambda = \Lambda_2 \cdots \Lambda_{n}$ is an $R_0$-basis of $H(e,e,n)$, it is enough to show that it is an $R_0$-generating set of $H(e,e,n)$, that is $\Lambda$ stable under left multiplication by $t_0, \cdots , t_{e-1}, s_3$, $\cdots$, $s_n$. Since $\Lambda_2 \cdots \Lambda_{n-1}$ is an $R_0$-basis of $H(e,e,n-1)$, the set $\Lambda_2 \cdots \Lambda_n$ is stable under left multiplication by $t_0, \cdots , t_{e-1},s_3$, $\cdots$, $s_{n-1}$. We prove that it is stable under left multiplication by $s_n$, that is $s_n(a_2 \cdots a_n) = a_2 \cdots a_{n-2}s_n(a_{n-1}a_n)$ belongs to Span$(\Lambda)$ for $a_2 \in \Lambda_2$, $\cdots$, $a_n \in \Lambda_n$ by checking all the different possibilities for $a_{n-1} \in \Lambda_{n-1}$ and $a_n \in \Lambda_n$.\\

Assume $n > 3$. If $a_{n-1} =1$ or $a_n =1$, it is readily checked that $s_n(a_{n-1}a_n)$ belongs to Span$(\Lambda_{n-1}\Lambda_n)$. If $a_{n-1} = s_{n-1} \cdots s_{i}$ for $2 \leq i \leq n-1$, we distinguish $3$ different cases for $a_n$ that belongs to $\Lambda_n$. This is done in Lemmas \ref{Lemma2}, \ref{Lemma3} and \ref{Lemma4} below. If $a_{n-1} = s_{n-1} \cdots s_{3}t_k$ for $0 \leq k \leq e-1$, we also distinguish $3$ different cases for $a_n \in \Lambda_n$. This is done in Lemmas \ref{Lemma5}, \ref{Lemma6} and \ref{Lemma7}. Finally, if $a_{n-1} = s_{n-1} \cdots s_{3}t_ks_2 \cdots s_{i}$ for $1 \leq k \leq e-1$ and $2 \leq i \leq n-1$, we also have $3$ different cases for $a_n \in \Lambda_n$ (see Lemmas \ref{Lemma8}, \ref{Lemma9} and \ref{Lemma10} below).\\

Let us start by establishing the following two preliminary lemmas.

\begin{lemma}\label{LemmaTLTKInVect}

For $i, j \in \mathbb{Z}/e\mathbb{Z}$, we have $t_j t_i \in$ Span$(\Lambda_2)$.

\end{lemma}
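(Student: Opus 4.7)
The plan is to derive a recursion that reduces $t_j t_i$ to $t_{j-1} t_{i-1}$ modulo terms already lying in $\mathrm{Span}(\Lambda_2)$, and then to iterate it $i$ times so that the second index drops to $0$, at which point we are in a trivial base case. The tools available are just the quadratic relations $t_k^2 = a t_k + 1$ (which make each $t_k$ invertible with $t_k^{-1} = t_k - a$) together with Relation~1 of the Corran--Picantin presentation, which asserts that the element $\tau := t_k t_{k-1}$ does not depend on $k \in \mathbb{Z}/e\mathbb{Z}$. The case $i = 0$ is immediate: $t_j t_0$ already belongs to $\Lambda_2$ when $j \neq 0$, and $t_0^2 = a t_0 + 1 \in \mathrm{Span}(\Lambda_2)$ when $j = 0$.

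To set up the recursion I would first compute $t_j \tau$ in two steps. Writing $\tau = t_j t_{j-1}$ and applying the quadratic relation gives
\[
t_j \tau \;=\; t_j^2\, t_{j-1} \;=\; (a t_j + 1) t_{j-1} \;=\; a \tau + t_{j-1}.
\]
Next I would use the other presentation $\tau = t_i t_{i-1}$ of the same element to rewrite this as $t_j t_i t_{i-1} = a \tau + t_{j-1}$. Right-multiplying by $t_{i-1}^{-1} = t_{i-1} - a$ and simplifying $\tau t_{i-1} = t_i t_{i-1}^2 = a \tau + t_i$, the powers of $\tau$ cancel against each other and one is left with the clean identity
\[
t_j t_i \;=\; a t_i \;-\; a t_{j-1} \;+\; t_{j-1} t_{i-1}. \qquad (\star)
\]

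The final step is to iterate $(\star)$ exactly $i$ times, with indices read mod $e$. This expresses $t_j t_i$ as an $R_0$-linear combination of single generators $t_m \in \Lambda_2$ plus a residual term $t_{j-i} t_0$, which itself lies in $\Lambda_2$ when $j \not\equiv i \pmod{e}$ and equals $a t_0 + 1 \in \mathrm{Span}(\Lambda_2)$ when $j \equiv i \pmod{e}$; either way, the conclusion follows. In my view the main obstacle is spotting $(\star)$ at all: the naive strategy of rewriting $t_j$ or $t_i$ individually as a word in $t_0, t_1$ via $t_{k+1} = \tau t_k^{-1}$ introduces length-three monomials whose reduction loops back to $t_j t_i$ and yields nothing. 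The crucial move is to first multiply by the \emph{global} element $\tau$ on the right and only then invert $t_{i-1}$; this is precisely what forces the $\tau$-terms to cancel and leaves just the two-term remainder driving the recursion.
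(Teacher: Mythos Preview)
Your proof is correct and follows essentially the same route as the paper: both derive the identical recursion $t_j t_i = t_{j-1} t_{i-1} + a(t_i - t_{j-1})$ and then iterate it until the second index reaches $0$. The only cosmetic difference is that you right-multiply by $t_{i-1}^{-1} = t_{i-1} - a$ whereas the paper multiplies $t_i t_{i-1} = t_j t_{j-1}$ by $t_j$ on the left and $t_{i-1}$ on the right before expanding the squares, but the algebra and the resulting identity $(\star)$ are the same.
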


\begin{proof}

If $i=j$, then we have $t_i^2 = at_i + 1 \in$ Span$(\Lambda_2)$. Suppose $i \neq j$. We have\\
$t_i t_{i-1} = t_j t_{j-1}$. If we multiply by $t_j$ on the left and by $t_{i-1}$ on the right, we get\\
$t_j t_i t_{i-1}^2 = t_j^{2} t_{j-1}t_{i-1}$. Using the quadratic relations, we have\\
$t_jt_i(a t_{i-1} +1) = (a t_j + 1)t_{j-1}t_{i-1}$, that is\\
$a t_j t_i t_{i-1} + t_jt_i = a t_j t_{j-1} t_{i-1} + t_{j-1}t_{i-1}$.\\
Replacing $t_i t_{i-1}$ by $t_jt_{j-1}$ and  $t_jt_{j-1}$ by $t_i t_{i-1}$, we get\\
$at_j^2 t_{j-1} + t_jt_i = at_it_{i-1}^2 + t_{j-1}t_{i-1}$. Using the quadratic relations, we have\\
$a(at_j +1) t_{j-1} + t_jt_i = at_i(at_{i-1}+1) + t_{j-1}t_{i-1}$, that is\\
$a^2 t_1t_0 + at_{j-1} + t_jt_i = a^2t_1t_0 + a t_i + t_{j-1}t_{i-1}$. Simplifying this relation, we get
\begin{center} $t_jt_i = t_{j-1}t_{i-1} + a(t_i - t_{j-1})$.\end{center}
Now, we apply the same operations to compute $t_{j-1}t_{i-1}$ and so on until we arrive to a term of the form $t_kt_0$ for some $k \in \mathbb{Z}/e\mathbb{Z}$. Thus, if $i \neq j$, then $t_jt_i$ belongs to Span$(\Lambda_2 \setminus \{1\})$.

\end{proof}

\begin{lemma}\label{LemmaTkT0}

For $1 \leq k \leq e-1$, we have $t_kt_0 \in R_0(t_1t_0) + R_0(t_1t_0)^2 + \cdots + R_0(t_1t_0)^k + R_0t_1 + R_0t_2 + \cdots + R_0t_{k-1}$.

\end{lemma}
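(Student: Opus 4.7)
The plan is to proceed by induction on $k$, leveraging a single clean identity that lets us ``push'' $t_1 t_0$ past any $t_j$.

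First I would establish the key identity
\[
t_1 t_0 \cdot t_j \;=\; a\,t_1 t_0 \,+\, t_{j+1} \qquad \text{for every } j \in \mathbb{Z}/e\mathbb{Z}.
\]
This is immediate from Relation $1$ of Definition \ref{DefPresCorranPicantinG(e,e,n)}, which gives $t_{j+1}t_j = t_1 t_0$, combined with the quadratic relation $t_j^2 = a t_j + 1$:
\[
t_1 t_0 \cdot t_j \;=\; t_{j+1}t_j \cdot t_j \;=\; t_{j+1}\,t_j^2 \;=\; t_{j+1}(a t_j + 1) \;=\; a\,t_{j+1}t_j \,+\, t_{j+1} \;=\; a\,t_1 t_0 \,+\, t_{j+1}.
\]

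Next I would induct on $k$, the case $k=1$ being trivial. For the induction step, using $t_k t_{k-1} = t_1 t_0$ together with $t_{k-1}^{-1} = t_{k-1} - a$, I compute
\[
t_k t_0 \;=\; t_1 t_0 \cdot t_{k-1}^{-1} \cdot t_0 \;=\; t_1 t_0\,(t_{k-1}-a)\,t_0 \;=\; t_1 t_0 \cdot t_{k-1} t_0 \;-\; a\, t_1 t_0^2.
\]
Since $t_0^2 = a t_0 + 1$, the tail $-a\,t_1 t_0^2 = -a^2 t_1 t_0 - a t_1$ already sits in $R_0(t_1 t_0) + R_0 t_1$, which is inside the target span.

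It then suffices to show $t_1 t_0 \cdot t_{k-1} t_0$ lies in the target span. By the induction hypothesis, $t_{k-1} t_0$ is an $R_0$-combination of $(t_1 t_0), (t_1 t_0)^2, \ldots, (t_1 t_0)^{k-1}$ and of $t_1, t_2, \ldots, t_{k-2}$. Left-multiplying by $t_1 t_0$ shifts the powers into $(t_1 t_0)^2, \ldots, (t_1 t_0)^k$, still inside the target span, while each term $t_1 t_0 \cdot t_j$ with $1 \leq j \leq k-2$ is absorbed by the identity of the first step, producing $a\,t_1 t_0 + t_{j+1}$ with $j+1 \in \{2, \ldots, k-1\}$. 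Assembling these pieces concludes the induction.

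There is no real obstacle once the commutation rule $t_1 t_0 \cdot t_j = a\,t_1 t_0 + t_{j+1}$ is recognized; finding this identity, which cleanly converts ``$t_1 t_0$ times a single $t_j$'' back into the two admissible generator types $(t_1 t_0)$ and $t_{j+1}$, is the one subtle point and it carries the whole induction.
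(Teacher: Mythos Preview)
Your proof is correct and follows essentially the same approach as the paper: both derive the recursion $t_k t_0 = t_1 t_0 \cdot t_{k-1} t_0 - a\, t_1 t_0 \cdot t_0$ (you via $t_k = t_1 t_0\, t_{k-1}^{-1}$, the paper by left/right multiplying $t_{k+1}t_k = t_k t_{k-1}$), apply the induction hypothesis to $t_{k-1}t_0$, and handle $t_1 t_0 \cdot t_j$ via the same identity $t_1 t_0 \cdot t_j = a\,t_1 t_0 + t_{j+1}$. The only difference is cosmetic: you isolate this identity upfront, while the paper computes it inline.
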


\begin{proof}

We prove the property by induction on $k$. The property is clearly satisfied for $k = 1$. Let $k \geq 2$. Suppose $t_{k-1}t_0 \in R_0(t_1t_0) + R_0(t_1t_0)^2 + \cdots + R_0(t_1t_0)^{k-1} + R_0t_1 + R_0t_2 + \cdots + R_0t_{k-2}$. We have that $t_{k+1}t_k = t_kt_{k-1}$. Multiplying by $t_{k+1}$ on the left and by $t_0$ on the right, we get $t_{k+1}^2t_kt_0 =t_{k+1} t_kt_{k-1}t_0$. Using the quadratic relations and replacing $t_{k+1} t_k$ with $t_1t_0$, we get $(at_{k+1} +1)t_kt_0 =t_1 t_0t_{k-1}t_0$. After simplifying this relation, we have $t_kt_0 = t_1t_0(t_{k-1}t_0) - a(t_1t_0)t_0$. Using the induction hypothesis, we replace $t_{k-1}t_0$ by its value and we get $t_kt_0 \in t_1t_0(R_0(t_1t_0) + R_0(t_1t_0)^2 + \cdots + R_0(t_1t_0)^{k-1} + R_0t_1 + R_0t_2 + \cdots + R_0t_{k-2}) + R_0(t_1t_0)t_0$. This is equal to $R_0(t_1t_0)^2 + R_0(t_1t_0)^3 + \cdots + R_0(t_1t_0)^{k} + R_0(t_1t_0)t_1 + R_0(t_1t_0)t_2 + \cdots + R_0(t_1t_0)t_{k-2} + R_0(t_1t_0)t_0$. Now $(t_1t_0)t_m$ is equal to $t_{m+1}t_mt_m \in R_0t_1t_0 + R_0t_{m+1}$ for $1 \leq m \leq k-2$ and $(t_1t_0)t_0 \in R_0(t_1t_0) + R_0t_1$. It follows that $t_kt_0 \in R_0(t_1t_0) + R_0(t_1t_0)^2 + \cdots + R_0(t_1t_0)^k + R_0t_1 + R_0t_2 + \cdots + R_0t_{k-1}$.

\end{proof}

As a direct consequence of Lemma \ref{LemmaTLTKInVect}, we have the following.

\begin{proposition}\label{PropInductionHypothesis}

Let $x = t_l$ with $l \in \mathbb{Z}/e\mathbb{Z}$. For all $a_2 \in \Lambda_2$, we have $xa_2$ belongs to Span$(\Lambda_2)$.

\end{proposition}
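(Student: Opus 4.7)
The plan is to reduce everything to the two preliminary lemmas and then dispatch a short case split on the form of $a_2 \in \Lambda_2$. Recall that $\Lambda_2$ consists of $1$, the generators $t_k$ for $0 \leq k \leq e-1$, and the products $t_k t_0$ for $1 \leq k \leq e-1$, so there are only three shapes to check.

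The first two subcases are immediate. If $a_2 = 1$, then $t_l a_2 = t_l \in \Lambda_2$. If $a_2 = t_k$ for some $0 \leq k \leq e-1$, then $t_l a_2 = t_l t_k$ lies in $\mathrm{Span}(\Lambda_2)$ by Lemma \ref{LemmaTLTKInVect}. The remaining and only substantive case is $a_2 = t_k t_0$ with $1 \leq k \leq e-1$. The plan here is to first apply Lemma \ref{LemmaTLTKInVect} to the leading product $t_l t_k$ to rewrite it as an $R_0$-linear combination $\sum_i \alpha_i \lambda_i$ with $\lambda_i \in \Lambda_2$, and then verify that $\lambda_i t_0 \in \mathrm{Span}(\Lambda_2)$ for each admissible shape of $\lambda_i$, so that right-multiplying the whole expression by $t_0$ keeps us inside $\mathrm{Span}(\Lambda_2)$.

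Those residual checks are purely mechanical and use only the quadratic relations $t_i^2 = a t_i + 1$: one has $1 \cdot t_0 = t_0 \in \Lambda_2$; for $\lambda_i = t_j$ with $j \geq 1$ the product $t_j t_0$ is already in $\Lambda_2$, while for $j=0$ the quadratic relation gives $t_0 \cdot t_0 = a t_0 + 1 \in \mathrm{Span}(\Lambda_2)$; and for $\lambda_i = t_j t_0$ with $j \geq 1$, applying the quadratic relation on the right-most $t_0$ yields $t_j t_0 \cdot t_0 = a t_j t_0 + t_j$, which again belongs to $\mathrm{Span}(\Lambda_2)$.

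The main obstacle, such as it is, has already been absorbed into Lemma \ref{LemmaTLTKInVect}, which does the real work of using the relation $t_i t_{i-1} = t_j t_{j-1}$ together with the quadratic relations to iteratively reduce an arbitrary $t_j t_i$ down to a combination of $t_k t_0$ plus lower-length terms. Once that lemma is in hand, the present proposition is essentially bookkeeping: the delicate point is simply keeping track of the three shapes in $\Lambda_2$ and verifying the right-multiplication by $t_0$ in each case, all of which reduces to one application of a quadratic relation.
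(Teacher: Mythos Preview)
Your proof is correct and follows the same approach as the paper: the paper simply states the proposition as a direct consequence of Lemma~\ref{LemmaTLTKInVect}, and you have carefully written out the case split that makes this ``direct consequence'' explicit. The only content beyond the lemma is the right-multiplication by $t_0$ in the third case, which you handle correctly via the quadratic relation.
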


\begin{remark}\label{RemarkTwoConjugHecke}

Recall that we excluded the case ($n = 2$, $e$ even) in Definition \ref{DefPresH(e,e,n)} (see Remark \ref{RemH(e,e,2)}). Consider the Hecke algebra $H(e,e,2)$ with $e$ even. Similarly to Proposition \ref{PropInductionHypothesis}, one shows that $\Lambda_2$ is stable under left multiplication by all the $t_i$'s for $i \in \mathbb{Z}/e\mathbb{Z}$. Then, applying Proposition (2.3) (i) of \cite{MarinG20G21}, we also get that $\Lambda_2$ is an $R_0$-basis of $H(e,e,2)$ for the case $e$ even.

\end{remark}

\begin{proposition}\label{PropCasen=3}

For all $a_2 \in \Lambda_2$ and $a_3 \in \Lambda_3$, the element $s_3(a_2a_3)$ belongs to Span$(\Lambda_2\Lambda_3)$.

\end{proposition}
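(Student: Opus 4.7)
The plan is to verify, case by case, that $s_3(a_2 a_3)$ lies in Span$(\Lambda_2\Lambda_3)$ for every pair $(a_2,a_3) \in \Lambda_2 \times \Lambda_3$. My toolkit consists of the quadratic relations $s_3^2 = as_3 + 1$ and $t_i^2 = at_i+1$ (whence $s_3^{-1} = s_3 - a$), the braid relation $s_3 t_i s_3 = t_i s_3 t_i$, the identities $t_i t_{i-1} = t_j t_{j-1}$, Lemmas~\ref{LemmaTLTKInVect} and~\ref{LemmaTkT0}, and Proposition~\ref{PropInductionHypothesis}, which tells me that Span$(\Lambda_2)$ is already stable under left multiplication by any $t_l$. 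I would organise the case analysis by the shape of $a_2$, since the difficulty grows with its length.

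When $a_2 = 1$, the verification is immediate from $s_3^2 = as_3 + 1$: for instance $s_3(s_3 t_k) = as_3 t_k + t_k \in$ Span$(\Lambda_3 \cup \Lambda_2)$, and similarly $s_3(s_3 t_k s_2) = as_3 t_k s_2 + t_k t_0$, with a small correction via $t_0^2 = at_0 + 1$ when $k=0$. When $a_2 = t_l$, the braid relation lets me push $s_3$ past $t_l$: I would rewrite
$$s_3\, t_l\,(s_3 t_k \cdots) \;=\; t_l\, s_3\,(t_l t_k)\cdots,$$
and then invoke Lemma~\ref{LemmaTLTKInVect} to express $t_l t_k$ as an $R_0$-combination of elements of $\Lambda_2$. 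The resulting subexpressions of the form $s_3 \cdot x$ with $x \in \Lambda_2$ are exactly what the $a_2 = 1$ case already covers, while Proposition~\ref{PropInductionHypothesis} absorbs the leftmost $t_l$ and keeps everything in $\Lambda_2\Lambda_3$.

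The case $a_2 = t_l t_0$ is where I expect the real work, because no single relation pushes $s_3$ across the two-letter block $t_l t_0$ in one step. My strategy is first to apply Lemma~\ref{LemmaTkT0}, which rewrites $t_l t_0$ as an $R_0$-combination of powers $(t_1 t_0)^i$ and of single generators $t_j$. Terms of the form $s_3 t_j \cdot a_3$ reduce to the $a_2 = t_l$ stage already treated, so the genuine remaining task is to handle $s_3(t_1 t_0)^i \cdot a_3$. For this I would induct on $i$: the base case uses $s_3 t_1 t_0 = s_3 t_1 s_2 \in \Lambda_3$, and the inductive step peels off a single factor of $t_1 t_0$ by first applying the braid relation to the innermost $s_3 t_1$ and then a quadratic relation, each step producing terms to which the earlier stages apply.

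No identity beyond those listed is required; the proof is essentially organised bookkeeping. The only genuine subtlety is that the recursion in the $t_l t_0$ stage must terminate, which it does because each rewrite strictly decreases either the exponent of the leading $(t_1 t_0)$-block or the number of $t$-letters standing to the right of the outermost $s_3$.
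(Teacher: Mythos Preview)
Your plan follows the paper's proof almost exactly: case split on the shape of $a_2$, dispatch $a_2=1$ with the quadratic relation, dispatch $a_2=t_l$ via the braid relation $s_3t_ls_3=t_ls_3t_l$ plus Lemma~\ref{LemmaTLTKInVect}, and for $a_2=t_lt_0$ invoke Lemma~\ref{LemmaTkT0} to reduce to powers $(t_1t_0)^i$ and single $t_j$'s. The organisation and the tools are the same as in the paper.

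The one place where your sketch is thinner than it needs to be is the inductive reduction of $s_3(t_1t_0)^i\,a_3$. Your base case ``$s_3t_1t_0\in\Lambda_3$'' is not enough: for $a_3=s_3t_l$ you still need $s_3t_1t_0\,s_3t_l\in\mathrm{Span}(\Lambda_2\Lambda_3)$, which is already the heart of the matter. More importantly, your inductive step speaks of ``applying the braid relation to the innermost $s_3t_1$'', but in $s_3(t_1t_0)^is_3t_l$ there is no subword of the form $s_3t_1s_3$ or $t_1s_3t_1$ to which the braid relation applies directly. The paper's move, which you are missing, is to first rewrite the rightmost $t_1t_0$ as $t_{l+1}t_l$ using Relation~1 of Definition~\ref{DefPresB(e,e,n)CorranPicantin}; this creates $\cdots t_{l+1}\,\underline{t_ls_3t_l}$, and \emph{now} the braid relation fires, giving $\cdots t_{l+1}s_3t_ls_3$. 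Iterating (with a quadratic relation absorbing the resulting $t_{l+1}^2$) is what actually decreases the exponent. Once you insert this rewriting $t_1t_0=t_{m+1}t_m$ as the first step of your induction, your argument goes through and coincides with the paper's Case~5.
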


\begin{proof}

The case where $a_1 \in \Lambda_1$ and $a_2 = 1$ is obvious. The case where $a_1 = 1$ and $a_2 \in \Lambda_2$ is also obvious.

\emph{Case 1}. Suppose $a_2 = t_k$ for $0 \leq k \leq e-1$ and $a_3 = s_3$.\\
We have $s_3t_ks_3 = t_k\underline{s_3t_k} \in$ Span$(\Lambda_2\Lambda_3)$.

\emph{Case 2}. Suppose $a_2 = t_k$ for $0 \leq k \leq e-1$ and $a_3 = s_3t_l$ for $0 \leq l \leq e-1$.\\
We have $\underline{s_3t_ks_3}t_l = t_ks_3t_kt_l$. After replacing $t_kt_l$ by its decomposition over $\Lambda_2$ (see Lemma \ref{LemmaTLTKInVect}), we directly have $s_3t_ks_3t_l \in$ Span$(\Lambda_2\Lambda_3)$.

\emph{Case 3}. Suppose $a_2 = t_k$ for $0 \leq k \leq e-1$ and $a_3 = s_3t_lt_0$ or $a_3 = s_3t_lt_0s_3$ for $1 \leq l \leq e-1$. We have $\underline{s_3t_ks_3}t_lt_0s_3 = t_ks_3t_kt_lt_0s_3$. By replacing $t_kt_l$ by its value (see Lemma \ref{LemmaTLTKInVect}), we obviously have $s_3t_ks_3t_lt_0$ and $s_3t_ks_3t_lt_0s_3$ belong to Span$(\Lambda_2\Lambda_3)$.

\emph{Case 4}. Suppose $a_2 = t_kt_0$ for $1 \leq k \leq e-1$ and $a_3 = s_3$. We have $s_3(a_2a_3) = s_3t_kt_0s_3 \in$ Span$(\Lambda_2\Lambda_3)$.

\emph{Case 5}. Suppose $a_2 = t_kt_0$ with $1 \leq k \leq e-1$ and $a_3 = s_3t_l$ with $0 \leq l \leq e-1$.\\
We have $s_3(a_2a_3) = s_3t_kt_0s_3t_l$. Recall that by Lemma \ref{LemmaTkT0}, we have $t_kt_0 \in R_0(t_1t_0) + R_0(t_1t_0)^2 + \cdots + R_0(t_1t_0)^k + R_0t_1 + R_0t_2 + \cdots + R_0t_{k-1}$. Replacing $t_kt_0$ by its value, we have to deal with the following two terms:\\
$s_3 t_x s_3t_l$ with $1 \leq x \leq k-1$ and
$s_3 (t_1t_0)^x s_3t_l$ with $1 \leq x \leq k$.\\
The first term is done in Case 2. For the second term, we decrease the power of $(t_1t_0)$ and use $t_1t_0 = t_{l+1}t_l$ to get
$s_3 (t_1t_0)^{x-1}t_{l+1}\underline{t_l s_3t_l}$. We apply a braid relation and then get
$s_3 (t_1t_0)^{x-1}t_{l+1}s_3t_ls_3$. Again, we decrease the power of $(t_1t_0)$ and use $t_1t_0 = t_{l+2}t_{l+1}$. We get $s_3 (t_1t_0)^{x-2}t_{l+2}t_{l+1}^2s_3t_ls_3 \in R_0 s_3(t_1t_0)^{x-2}t_{l+2}t_{l+1}s_3t_ls_3 + R_0s_3 (t_1t_0)^{x-2}t_{l+2}s_3t_ls_3$. We continue by decreasing the power of $(t_1t_0)$ and we get in the next step that $s_3(a_2 a_3)$ belongs to
\begin{small}
$R_0 s_3(t_1t_0)^{x-3}t_{l+1}s_3t_{l}s_3^3$
$+ R_0 s_3(t_1t_0)^{x-3}t_{l+2}s_3t_{l}s_3^2$\\
$+ R_0 s_3 (t_1t_0)^{x-3}t_{l+1}s_3t_{l}s_3^2$
$+ R_0 s_3 (t_1t_0)^{x-3}t_{l+3}s_3t_ls_3$.\end{small}
Inductively, we arrive to terms of the form
$s_3 t_1t_0t_{x'}s_3t_l(s_3)^{x''}$ ($0 \leq x' \leq e-1$ and $x'' \in \mathbb{N}$). Replace $t_1t_0$ by $t_{x'+1}t_{x'}$, we get $s_3 t_{x'+1}(t_{x'})^2s_3t_l(s_3)^{x''}$ which belongs to
$R_0 s_3 t_{x'+1}t_{x'}s_3t_l(s_3)^{x''}$
$+ R_0 s_3 t_{x'+1}s_3t_l(s_3)^{x''}$. Replacing $t_{x'+1}t_{x'}$ by $t_{l+1}t_l$ and applying a braid relation in the first term, we get
$R_0 s_3 t_{l+1}s_3t_l(s_3)^{x''+1}$
$+ R_0 s_3 t_{x'+1}s_3t_l(s_3)^{x''}$.
Since $s_3^2=as_3+1$, it remains to deal with these $2$ terms:
\begin{center}$s_3 t_xs_3t_l$ and $s_3 t_xs_3t_ls_3$, for some $0 \leq x \leq e-1$.\end{center} It is readily checked that these terms belong to Span$(\Lambda_2 \Lambda_3)$.

\emph{Case 6}. Suppose $a_2 = t_kt_0$ and $a_3 = s_3t_lt_0$ or $a_3 = s_3t_lt_0s_3$ ($1 \leq k,l \leq e-1$).\\
By Case 5, we get two terms of the form $s_3 t_xs_3t_l$ and $s_3 t_xs_3t_ls_3$. Multiplying them on the right by $t_0$ then by $s_3$, we get that $s_3(a_2a_3)$ obviously belongs to Span$(\Lambda_2 \Lambda_3)$.

\end{proof}

In the sequel, we will indicate by (1) the operation that shifts the underlined letters to the left, by (2) the operation that applies braid relations, and by (3) the one that applies quadratic relations. The following lemma is useful in the proofs of Lemmas \ref{Lemma7}, \ref{Lemma9} and \ref{Lemma10} below. Denote by $S_{n-1}^{*}$ the set of the words over $\{t_0, \cdots, t_{e-1}, s_3, \cdots, s_{n-1} \}$.\\

\begin{lemma}\label{LmScholie1}

Let $3 \leq i \leq n$. We have $s_n \cdots s_4s_3^2 s_4 \cdots s_i$ belongs to Span$(S_{n-1}^{*}\Lambda_n)$. 

\end{lemma}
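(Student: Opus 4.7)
The plan is to apply the quadratic relation $s_3^2 = a s_3 + 1$ and split the target into two pieces:
\[
s_n \cdots s_4 s_3^2 s_4 \cdots s_i \;=\; a \cdot (s_n \cdots s_4 \cdot s_3 \cdot s_4 \cdots s_i) \;+\; (s_n \cdots s_4 \cdot s_4 \cdots s_i),
\]
and show that each piece lies in $\text{Span}(S_{n-1}^{*}\Lambda_n)$.

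For the first piece, I would establish by induction on $i$ the braid-algebra identity
\[
s_n \cdots s_4 \cdot s_3 \cdot s_4 \cdots s_i \;=\; (s_3 s_4 \cdots s_{i-1}) \cdot (s_n s_{n-1} \cdots s_3), \quad 3 \leq i \leq n.
\]
The base case $i=3$ is immediate via the empty-expression convention. For the inductive step, after applying the hypothesis one has to prove the auxiliary commutation $(s_n \cdots s_3) \cdot s_{i+1} = s_i \cdot (s_n \cdots s_3)$ for $3 \leq i \leq n-1$, which follows from a standard three-move argument: first commute $s_{i+1}$ leftward past $s_{i-1}, \ldots, s_3$; then apply the braid relation $s_{i+1} s_i s_{i+1} = s_i s_{i+1} s_i$; and finally commute the resulting free $s_i$ leftward past $s_{i+2}, \ldots, s_n$. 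Once the identity is in hand, the first piece is an $R_0$-multiple of $(s_3 \cdots s_{i-1})(s_n \cdots s_3)$, in which the left factor sits in $S_{n-1}^{*}$ (since $i - 1 \leq n - 1$) and the right factor sits in $\Lambda_n$.

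For the second piece I would introduce $T_j := (s_n s_{n-1} \cdots s_j)(s_j s_{j+1} \cdots s_i)$ for $4 \leq j \leq i+1$, so that the second piece is exactly $T_4$. Applying $s_j^2 = a s_j + 1$ at the center and then invoking the index-shifted analogue of the identity of the previous paragraph (with $j$ playing the role of $3$) yields the recursion
\[
T_j \;=\; a \cdot (s_j s_{j+1} \cdots s_{i-1}) \cdot (s_n s_{n-1} \cdots s_j) \;+\; T_{j+1},
\]
with termination $T_{i+1} = s_n s_{n-1} \cdots s_{i+1}$ (which is the empty word when $i = n$). Unrolling from $j = 4$ to $j = i$ displays $T_4$ as an $R_0$-linear combination of products of the form $(s_k \cdots s_{i-1})(s_n \cdots s_k) \in S_{n-1}^{*} \Lambda_n$, plus the trailing term $s_n \cdots s_{i+1} \in \Lambda_n$. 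Summing the two pieces finishes the proof.

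The main obstacle is the careful handling of the empty-expression convention at the boundary cases $i = 3, 4$ and $i = n$, together with the verification that in every factor $(s_k \cdots s_{i-1})$ the rightmost generator $s_{i-1}$ really does avoid $s_n$ (which is why the restriction $i \leq n$ is essential); the braid-and-commutation manipulations themselves are routine once this framework is set up.
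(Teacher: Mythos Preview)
Your proof is correct and follows essentially the same approach as the paper: both apply the quadratic relation to telescope the expression into a sum of terms of the form $s_n\cdots s_{k+1}s_k s_{k+1}\cdots s_i$, and both rewrite each such term as $(s_k\cdots s_{i-1})(s_n\cdots s_k)\in S_{n-1}^{*}\Lambda_n$ via the same braid-and-commute argument. The only cosmetic difference is that you run the recursion $T_j\mapsto T_{j+1}$ uniformly all the way to the clean terminal $T_{i+1}=s_n\cdots s_{i+1}$, whereas the paper stops the telescoping one step earlier at $s_n\cdots s_{i+1}s_is_{i-1}^{2}s_i$ and expands that last term by hand into three pieces; your bookkeeping is slightly tidier but the content is the same.
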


\begin{proof}

If $i=3$, we have $s_n \cdots s_4s_3^2 \in R_0 s_n \cdots s_4s_3 + R_0 s_n \cdots s_4$ (the last term is equal to $1$ if $n=3$). If $i = 4$, we have $s_n \cdots s_4 s_3^2s_4 \in R_0 s_n \cdots s_4s_3s_4 + R_0 s_n \cdots s_4^2 \in R_0 s_3 s_n \cdots s_3 + R_0 s_n \cdots s_4 + R_0 s_n \cdots s_5$. The last term is equal to $1$ if $n = 4$. Let $i \geq 5$.
We have $s_n \cdots s_4s_3^2 s_4 \cdots s_i$ belongs to $R_0 s_n \cdots s_4 s_3 s_4 \cdots s_i + R_0 s_n \cdots s_5 s_4^2 s_5 \cdots s_i$. We apply the quadratic relation $s_4^2 = as_4+1$ to the second term and get\\
$R_0 s_n \cdots s_5 s_4 s_5 \cdots s_i + R_0 s_n \cdots s_6 s_5^2 s_6 \cdots s_i$. And so on, we apply quadratic relations. We get terms of the form $s_n \cdots s_{k+1}s_ks_{k+1} \cdots s_i$ with $k+1 \leq i$ and a term of the form $s_n \cdots s_{i+1}s_is_{i-1}^{2}s_i$.\\
We have $s_n \cdots s_{i+1}s_is_{i-1}^{2}s_i$ belongs to\\
$R_0 s_n \cdots s_{i+1}s_is_{i-1}s_i + R_0 s_n \cdots s_{i+1}s_i + R_0 s_n \cdots s_{i+1} \subseteq \\ R_0 s_{i-1} s_n \cdots s_{i-1} + R_0 s_n \cdots s_{i+1}s_i + R_0 s_n \cdots s_{i+1}$ (the last term is equal to $1$ if $i = n$). Hence it belongs to Span$(S_{n-1}^{*}\Lambda_n)$.\\
The other terms are of the form $s_n \cdots s_{k+1}s_ks_{k+1} \cdots s_i$ ($k+1 \leq i$). We have
$\begin{array}{ll}
s_n \cdots \underline{s_{k+1}s_ks_{k+1}} \cdots s_i & \stackrel{(2)}{=}\\
s_n \cdots s_{k+2}\underline{s_k} s_{k+1}s_k \underline{s_{k+2}} \cdots s_{i} & \stackrel{(1)}{=}\\
s_k s_n \cdots \underline{s_{k+2} s_{k+1}s_{k+2}}s_k s_{k+3} \cdots s_{i} & \stackrel{(2)}{=}\\
s_k s_n \cdots \underline{s_{k+1}} s_{k+2}s_{k+1}s_k s_{k+3} \cdots s_{i} & \stackrel{(1)}{=}
\end{array}$\\
$\begin{array}{l}
s_k s_{k+1} s_n \cdots s_{k+2}s_{k+1}s_k \underline{s_{k+3}} \cdots \underline{s_{i-1}} s_{i}.
\end{array}$\\
We apply the same operations to $\underline{s_{k+3}}$, $\cdots$, $\underline{s_{i-1}}$ and get\\
$\begin{array}{ll}
s_k s_{k+1} \cdots s_{i-2} s_n \cdots s_{k} \underline{s_{i}} & \stackrel{(1)}{=}\\
s_k s_{k+1} \cdots s_{i-2} s_n \cdots \underline{s_i s_{i-1}s_i} s_{i-2} \cdots s_k & \stackrel{(2)}{=}\\
s_k s_{k+1} \cdots s_{i-2} s_n \cdots s_{i+1}\underline{s_{i-1}} s_{i}s_{i-1} s_{i-2} \cdots s_k & \stackrel{(1)}{=}\\
s_ks_{k+1} \cdots s_{i-1} s_ns_{n-1} \cdots s_{k}, &
\end{array}$\\
which belongs to Span$(S_{n-1}^{*}\Lambda_n)$.

\end{proof}

\begin{lemma}\label{Lemma2}

If $a_{n-1} = s_{n-1}s_{n-2} \cdots s_i$ with $3 \leq i \leq n-1$ and $a_n = s_n s_{n-1} \cdots s_{i'}$ with $3 \leq i' \leq n$, then $s_n (a_{n-1}a_n)$ belongs to Span$(\Lambda_{n-1}\Lambda_n)$.

\end{lemma}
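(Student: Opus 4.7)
Introduce the notation $L_j := s_{n-1} s_{n-2} \cdots s_j$ for $3 \leq j \leq n-1$ (setting $L_n := 1$ by convention), and $M_{j'} := s_n s_{n-1} \cdots s_{j'}$ for $3 \leq j' \leq n$ (setting $M_{n+1} := 1$); these are precisely the ``$s$-only'' elements of $\Lambda_{n-1}$ and $\Lambda_n$, and we have $a_{n-1} = L_i$, $a_n = M_{i'}$. Since $s_n$ commutes with $s_{n-2}, \ldots, s_j$, we get $s_n L_j = M_j$, so that $s_n a_{n-1} a_n = M_i M_{i'}$. The analogous manipulation combined with the braid relation $s_n s_{n-1} s_n = s_{n-1} s_n s_{n-1}$ yields the useful ``flip'' identity $M_j \cdot s_n = s_{n-1} s_n \cdot L_j$.

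The plan is to establish the explicit decomposition
$$ M_i M_{i'} = \begin{cases} L_{i'-1} \cdot M_i & \text{if } i < i',\\ a\,L_i \cdot M_i + L_i \cdot M_{i+1} & \text{if } i = i',\\ a\,L_i \cdot M_{i'} + L_{i'} \cdot M_{i+1} & \text{if } i > i', \end{cases} $$
from which the lemma is immediate: each term on each right-hand side is a product of an element of $\Lambda_{n-1}$ with an element of $\Lambda_n$.

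The boundary cases are straightforward. For $i' = n$, the flip identity immediately gives $M_i M_n = M_i s_n = s_{n-1} s_n L_i = L_{n-1} \cdot M_i$, matching the first line with $i' = n > i$. For $i = n-1$ and $i' \leq n-1$, one writes $M_{n-1} M_{i'} = s_n s_{n-1} s_n \cdot s_{n-1} s_{n-2} \cdots s_{i'} = s_{n-1} s_n \cdot s_{n-1}^2 \cdot s_{n-2} \cdots s_{i'}$ by the braid relation, and then the quadratic relation $s_{n-1}^2 = a s_{n-1} + 1$ splits the expression into exactly the two desired terms (after commuting $s_n$ past $s_{n-2}, \ldots, s_{i'}$ in the second summand). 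In the general case $i \leq n-2$ and $i' \leq n-1$, I would first commute the interior $s_n$ leftwards through $s_{n-2}, \ldots, s_i$ and apply the braid relation to obtain
$$ M_i M_{i'} = s_{n-1} s_n \cdot L_i L_{i'}, $$
and then simplify $L_i L_{i'}$ by iterating the sub-identity
$$ L_j \cdot s_{n-1} = s_{n-2} s_{n-1} \cdot (s_{n-2} s_{n-3} \cdots s_j) \qquad (j \leq n-2), $$
which itself follows from commuting $s_{n-1}$ through $s_j, \ldots, s_{n-3}$ and applying $s_{n-1} s_{n-2} s_{n-1} = s_{n-2} s_{n-1} s_{n-2}$. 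Each iteration peels off a prefix $s_{k-1} s_k$ and drops the ``level'' by one, until a factor reduces to the single generator $s_m$ with $m = \max(i,i')$; at that point, the quadratic relation $s_m^2 = a s_m + 1$ produces the two terms of the formula in the cases $i \geq i'$, whereas in the case $i < i'$ the base becomes $(s_m s_{m-1} \cdots s_i) \cdot s_m$, which collapses by commutation and one braid move to a single reduced expression with no $a$-contribution, ultimately giving $L_{i'-1} \cdot M_i$ after recombination with $s_{n-1} s_n$ and with the accumulated prefix.

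The main obstacle I anticipate is the careful bookkeeping of indices in the iterated simplification of $L_i L_{i'}$, especially verifying that the telescoping prefix $(s_{n-2} s_{n-1})(s_{n-3} s_{n-2}) \cdots (s_m s_{m+1})$ produced by the recursion combines correctly with the base-case residue, first absorbing the extra $s_{n-1} s_n$ and then recompiling (via the commutations $s_p s_q = s_q s_p$ for $|p-q|>1$) into the precise $L_j \cdot M_{j'}$ form predicted by the formula.
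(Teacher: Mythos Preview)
Your proposal is correct and arrives at exactly the same explicit decompositions as the paper: $L_{i'-1}M_i$ when $i<i'$, and $a\,L_iM_{i'}+L_{i'}M_{i+1}$ when $i\geq i'$. The paper's proof uses the identical toolkit (commutations $s_ps_q=s_qs_p$ for $|p-q|>1$, braid moves, and the quadratic relation), but computes directly on $s_n a_{n-1}a_n$, shifting one letter at a time from the right factor into the left; your route instead first factors $M_iM_{i'}=s_{n-1}s_n\cdot L_iL_{i'}$ via the ``flip'' identity and then reduces $L_iL_{i'}$ recursively, which is a slightly more structured packaging of the same computation. The bookkeeping you flag as an obstacle is genuine but routine, and since your claimed end formulas match the paper's line-by-line results, there is no mathematical gap.
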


\begin{proof}

\emph{Suppose $i < i'$.} We have $s_n(a_{n-1}a_n)$ is equal to\\
$\begin{array}{ll}
s_ns_{n-1}s_{n-2} \cdots s_i\underline{s_n}s_{n-1} \cdots s_{i'} & \stackrel{(1)}{=}\\
\underline{s_ns_{n-1}s_n}s_{n-2} \cdots s_{i}s_{n-1} \cdots s_{i'} & \stackrel{(2)}{=}\\
s_{n-1}s_ns_{n-1}s_{n-2} \cdots s_{i}\underline{s_{n-1}} \cdots s_{i'} & \stackrel{(1)}{=}\\
s_{n-1}s_n\underline{s_{n-1}s_{n-2}s_{n-1}} \cdots s_{i}s_{n-2} \cdots s_{i'} & \stackrel{(2)}{=}\\
s_{n-1}s_n\underline{s_{n-2}}s_{n-1}s_{n-2} \cdots s_{i}s_{n-2} \cdots s_{i'} & \stackrel{(1)}{=} 
\end{array}$\\
$\begin{array}{l}
s_{n-1}s_{n-2}s_ns_{n-1}s_{n-2} \cdots s_{i}\underline{s_{n-2}} \cdots \underline{s_{i'}}.
\end{array}$\\
We apply the same operations to the underlined letters $\underline{s_{n-2}} \cdots \underline{s_{i'}}$ in order to get
$s_{n-1}s_{n-2}\cdots s_{i'-1}s_ns_{n-1} \cdots s_i$ which belongs to Span$(\Lambda_{n-1}\Lambda_n)$.\\

\emph{Suppose $i \geq i'$.} We have $s_n(a_{n-1}a_n)$ is equal to $s_ns_{n-1} \cdots s_i\underline{s_n}s_{n-1} \cdots s_{i'}$. We apply the operations (1) and (2) and get $s_{n-1}s_{n}s_{n-1} \cdots s_{i}\underline{s_{n-1}} \cdots s_{i'}$. Then we apply the same operations to $\underline{s_{n-1}}$ and get $s_{n-1}s_{n-2}s_ns_{n-1} \cdots s_i \underline{s_{n-2} \cdots s_{i'}}$.
Since $i \geq i'$, we write $s_{i+2}s_{i+1}s_i$ in $\underline{s_{n-2} \cdots s_{i'}}$ and get $s_{n-1}s_{n-2}s_ns_{n-1} \cdots s_i \underline{s_{n-2}} \cdots \underline{s_{i+2}}s_{i+1}s_i \cdots s_{i'}$. Similarly, we apply the same operations to $\underline{s_{n-2}}$, $\cdots$, $\underline{s_{i+2}}$ and get\\
$\begin{array}{ll}
s_{n-1} \cdots s_{i+1}s_n \cdots \underline{s_{i+1}s_is_{i+1}}s_{i}\cdots s_{i'} & \stackrel{(2)}{=}\\
s_{n-1} \cdots s_{i+1}s_n \cdots s_{i+2}\underline{s_{i}}s_{i+1}s_{i}^2s_{i-1} \cdots s_{i'} & \stackrel{(1)}{=}\\
s_{n-1} \cdots s_{i}s_n \cdots s_{i+2}s_{i+1}\underline{s_{i}^2}s_{i-1} \cdots s_{i'} & \stackrel{(3)}{=}
\end{array}$\\
$\begin{array}{l} a s_{n-1} \cdots s_{i}s_n \cdots s_{i+1}s_is_{i-1}\cdots s_{i'} +  s_{n-1} \cdots s_{i}s_n \cdots s_{i+2}s_{i+1}\underline{s_{i-1}}\cdots \underline{s_{i'}}.\end{array}$\\
The first term belongs to Span$(\Lambda_{n-1}\Lambda_n)$. For the second term, the underlined letters commute with $s_n \cdots s_{i+2}s_{i+1}$ hence they are shifted to the left. We thus get $s_n(a_{n-1}a_n)$ is equal to
$a s_{n-1} \cdots s_{i}s_n \cdots s_{i'} +  s_{n-1} \cdots s_{i'} s_n \cdots s_{i+1}$ which belongs to Span$(\Lambda_{n-1}\Lambda_n)$.

\end{proof}

\begin{lemma}\label{Lemma3}

If $a_{n-1} = s_{n-1} \cdots s_i$ with $3 \leq i \leq n-1$ and $a_n = s_n \cdots s_{3}t_{k}$ with $0 \leq k \leq e-1$, then $s_n (a_{n-1}a_n)$ belongs to Span$(\Lambda_{n-1}\Lambda_n)$.

\end{lemma}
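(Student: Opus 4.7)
The plan is to mirror the proof of Lemma~\ref{Lemma2} in the case $i \geq i' = 3$, treating the trailing $t_k$ of $a_n = s_n \cdots s_3 t_k$ as an inert suffix. Every relation invoked in the proof of Lemma~\ref{Lemma2} (commutation of distant $s_j$'s, the braid $s_j s_{j+1} s_j = s_{j+1} s_j s_{j+1}$, and the quadratic $s_j^2 = a s_j + 1$) acts only on the $s_j$'s, so $t_k$ will remain untouched at the right end throughout the manipulation of
$$s_n(a_{n-1}a_n) \;=\; s_n s_{n-1} \cdots s_i \cdot s_n s_{n-1} \cdots s_3 \cdot t_k.$$

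First I would rerun the sequence of moves from the proof of Lemma~\ref{Lemma2}, specialised to $i' = 3$: shift the middle $s_n$ leftward past $s_{n-2}, \ldots, s_i$, apply the braid $s_n s_{n-1} s_n = s_{n-1} s_n s_{n-1}$, iterate the ``slide and braid'' step to accumulate a prefix $s_{n-1} s_{n-2} \cdots s_{i+1}$, and conclude with a braid step and the quadratic relation on $s_i$. The result of Lemma~\ref{Lemma2}'s computation with the trailing $t_k$ appended is
$$s_n(a_{n-1}a_n) \;=\; a \cdot s_{n-1} \cdots s_i \cdot s_n \cdots s_3 t_k \;+\; s_{n-1} \cdots s_3 \cdot s_n \cdots s_{i+1} \cdot t_k.$$

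The first summand already lies in $\Lambda_{n-1}\Lambda_n$ since $s_{n-1}\cdots s_i \in \Lambda_{n-1}$ and $s_n \cdots s_3 t_k \in \Lambda_n$. The second summand is not directly of the right form because $s_n \cdots s_{i+1} t_k$ is not one of the listed shapes of $\Lambda_n$. To repair this, I would use the hypothesis $3 \leq i \leq n-1$, which gives $i+1 \geq 4$, and hence by Relation~3 of the Corran--Picantin presentation ($s_j t_k = t_k s_j$ for $j \geq 4$), the generator $t_k$ commutes with each of $s_{i+1}, s_{i+2}, \ldots, s_n$. Shifting $t_k$ to the left past this block rewrites the second summand as $(s_{n-1}\cdots s_3 t_k)\cdot(s_n \cdots s_{i+1})$, which lies in $\Lambda_{n-1}\Lambda_n$ since $s_{n-1}\cdots s_3 t_k \in \Lambda_{n-1}$ and $s_n \cdots s_{i+1} \in \Lambda_n$.

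There is no real obstacle: the heavy lifting has already been done in Lemma~\ref{Lemma2}, and the only new ingredient is the final commutation of $t_k$ past $s_n \cdots s_{i+1}$ in the second summand, which is exactly what the geodesic normal form structure of $\Lambda_n$ demands. The one delicate point is ensuring $i+1 \geq 4$ so that this final commutation is legal, and this is guaranteed by the hypothesis $i \geq 3$.
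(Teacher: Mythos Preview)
Your proof is correct and follows essentially the same approach as the paper: reduce to the case $i \geq i' = 3$ of Lemma~\ref{Lemma2} with $t_k$ carried along as an inert right factor, then in the second summand commute $t_k$ leftward past $s_n \cdots s_{i+1}$ (legal since $i+1 \geq 4$) to land in $\Lambda_{n-1}\Lambda_n$.
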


\begin{proof}

This corresponds to the case $i'=3$ in the proof of Lemma \ref{Lemma2} with a right multiplication by $t_k$ for $0 \leq k \leq e-1$. Since $i \geq 3$, by the case \mbox{$i \geq i'$} of Lemma \ref{Lemma2}, we have $s_n(a_{n-1}a_n) = a s_{n-1} \cdots s_{i}s_n \cdots s_{3}t_k +  s_{n-1} \cdots s_{3} s_n \cdots s_{i+1} t_k$. In the second term, $t_k$ commutes with $s_n \cdots s_{i+1}$ hence it is shifted to the left. We get $s_n(a_{n-1}a_n) = a s_{n-1} \cdots s_{i}s_n \cdots s_{3}t_k +  s_{n-1} \cdots s_{3}t_k s_n \cdots s_{i+1}$ which belongs to Span$(\Lambda_{n-1}\Lambda_n)$.

\end{proof}

\begin{lemma}\label{Lemma4}

If $a_{n-1} = s_{n-1} \cdots s_i$ with $3 \leq i \leq n-1$ and $a_n = s_n \cdots s_{3}t_{k}s_2s_3 \cdots s_{i'}$ with $1 \leq k \leq e-1$ and $2 \leq i' \leq n$, then $s_n (a_{n-1}a_n)$ belongs to Span$(\Lambda_{n-1}\Lambda_n)$.

\end{lemma}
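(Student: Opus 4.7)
The plan is to leverage Lemma \ref{Lemma3}, which already treats the prefix $s_n \cdots s_3 t_k$ of $a_n$, and to track how right-multiplication by the tail $s_2 \cdots s_{i'}$ propagates. By Lemma \ref{Lemma3},
\[
s_n \cdot (s_{n-1}\cdots s_i) \cdot (s_n\cdots s_3 t_k) = a\,(s_{n-1}\cdots s_i)(s_n\cdots s_3 t_k) + (s_{n-1}\cdots s_3 t_k)(s_n\cdots s_{i+1}).
\]
After right-multiplication by $s_2\cdots s_{i'}$, the first summand becomes $a\cdot(s_{n-1}\cdots s_i)\cdot a_n$, which lies directly in Span$(\Lambda_{n-1}\Lambda_n)$. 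The remaining task is to rewrite the second summand
\[
T := (s_{n-1}\cdots s_3 t_k)(s_n\cdots s_{i+1})(s_2\cdots s_{i'}).
\]

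Since every $s_j$ with $j \le i-1$ commutes with every $s_\ell$ with $\ell \ge i+1$ (their indices differ by at least $2$), the initial block $s_2\cdots s_{\min(i-1,i')}$ of the tail slides through $s_n\cdots s_{i+1}$ to the left. If $i' \le i-1$, this already gives $T = (s_{n-1}\cdots s_3 t_k s_2\cdots s_{i'})(s_n\cdots s_{i+1})$, with the first factor in $\Lambda_{n-1}$ (note $2\le i'\le n-2$) and the second in $\Lambda_n$. If $i'\ge i$, the key step is the identity
\[
(s_n\cdots s_{i+1})(s_i s_{i+1}\cdots s_{i'}) = (s_i s_{i+1}\cdots s_{i'-1})(s_n s_{n-1}\cdots s_i),
\]
which I would prove by induction on $i'-i\ge 0$. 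The base case $i'=i$ is immediate, and the inductive step reduces to the single-letter push $(s_n\cdots s_i)\,s_{i'+1} = s_{i'}\,(s_n\cdots s_i)$ for $i\le i'\le n-1$; the latter follows by commuting $s_{i'+1}$ past $s_i,\ldots,s_{i'-1}$, applying the braid relation $s_{i'+1}s_{i'}s_{i'+1} = s_{i'}s_{i'+1}s_{i'}$, and then commuting the emerging $s_{i'}$ past $s_{i'+2},\ldots,s_n$.

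Combined with the preliminary commutations, the identity yields $(s_n\cdots s_{i+1})(s_2\cdots s_{i'}) = (s_2\cdots s_{i'-1})(s_n\cdots s_i)$, whence
\[
T = (s_{n-1}\cdots s_3 t_k s_2\cdots s_{i'-1})(s_n\cdots s_i).
\]
From $3\le i \le i'\le n$ we get $2\le i'-1\le n-1$, placing the first factor in $\Lambda_{n-1}$ and the second in $\Lambda_n$, so $T\in$ Span$(\Lambda_{n-1}\Lambda_n)$ in either case. I expect the braid identity above to be the only real obstacle; once it is established, the rest of the argument is a clean bookkeeping exercise, and the generator $t_k$ never resurfaces because it commutes with every $s_j$, $j\ge 4$, that appears to its right in $T$.
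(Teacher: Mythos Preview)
Your argument is correct and follows the same route as the paper: invoke Lemma~\ref{Lemma3} to split off the first summand, then in the remaining term push $s_2\cdots s_{i'}$ through $s_n\cdots s_{i+1}$ by commutations and braid relations, arriving at $(s_{n-1}\cdots s_3 t_k s_2\cdots s_{i'})(s_n\cdots s_{i+1})$ when $i'<i$ and $(s_{n-1}\cdots s_3 t_k s_2\cdots s_{i'-1})(s_n\cdots s_i)$ when $i'\ge i$. The only cosmetic difference is that you package the second case into the clean identity $(s_n\cdots s_{i+1})(s_i\cdots s_{i'})=(s_i\cdots s_{i'-1})(s_n\cdots s_i)$ with an inductive proof, whereas the paper writes out the same sequence of shifts and braid moves explicitly.
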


\begin{proof}

According to Lemma \ref{Lemma3}, we have\\ $s_n(a_{n-1}a_n) = a s_{n-1} \cdots s_{i}s_n \cdots s_{3}t_ks_2 \cdots s_{i'} +  s_{n-1} \cdots s_{3}t_k s_n \cdots s_{i+1} s_2 \cdots s_{i'}$.
The first term is an element of Span$(\Lambda_{n-1}\Lambda_n)$. We check that the second term also belongs to Span$(\Lambda_{n-1}\Lambda_n)$. Actually,

\emph{If $i' < i$}, the second term is equal to
$s_{n-1} \cdots s_{3}t_k s_n \cdots s_{i+1} \underline{s_2} \cdots \underline{s_{i'}}$.\\ The underlined letters commute with $s_n \cdots s_{i+1}$ and are shifted to the left. We get $s_{n-1} \cdots s_{3}t_ks_2 \cdots s_{i'}s_n \cdots s_{i+1} \in$ Span$(\Lambda_{n-1}\Lambda_n)$.

\emph{If $i' \geq i$}, we write $s_{i-1}s_is_{i+1}$ in $s_2 \cdots s_{i'}$ and get $s_{n-1} \cdots s_{3}t_k s_n \cdots s_{i+1} (s_2 \cdots s_{i'}) = $\\
$\begin{array}{ll}
s_{n-1} \cdots s_{3}t_k s_n \cdots s_{i+1} (\underline{s_2} \cdots \underline{s_{i-1}}s_is_{i+1} \cdots s_{i'}) & \stackrel{(1)}{=}\\
s_{n-1} \cdots s_{3}t_k s_2 \cdots s_{i-1} s_n \cdots \underline{s_{i+1} (s_is_{i+1}} \cdots s_{i'}) & \stackrel{(2)}{=}\\
s_{n-1} \cdots s_{3}t_k s_2 \cdots s_{i-1} s_n \cdots \underline{s_{i}} s_{i+1} s_{i} (s_{i+2} \cdots s_{i'}) & \stackrel{(1)}{=}\\
s_{n-1} \cdots s_{3}t_k s_2 \cdots s_{i-1}s_i s_n \cdots s_{i+1} s_{i} (\underline{s_{i+2}} \cdots s_{i'}) & \stackrel{(1)}{=}\\
s_{n-1} \cdots s_{3}t_k s_2 \cdots s_{i-1}s_i s_n \cdots \underline{s_{i+2}s_{i+1} s_{i+2}} s_{i}(s_{i+3} \cdots s_{i'}) & \stackrel{(2)}{=}\\
s_{n-1} \cdots s_{3}t_k s_2 \cdots s_{i-1}s_i s_n \cdots \underline{s_{i+1}}s_{i+2} s_{i+1} s_{i}(s_{i+3} \cdots s_{i'}) & \stackrel{(1)}{=}
\end{array}$\\
$\begin{array}{l}
s_{n-1} \cdots s_{3}t_k s_2 \cdots s_i s_{i+1} s_n \cdots s_{i+2} s_{i+1} s_{i}(\underline{s_{i+3}} \cdots \underline{s_{i'}}).
\end{array}$\\
We apply the same operations to the underlined letters $\underline{s_{i+3}}$, $\cdots$, $\underline{s_{i'}}$. We finally get $s_{n-1} \cdots s_{3}t_ks_2 \cdots s_{i'-1}s_n \cdots s_i \in$ Span$(\Lambda_{n-1}\Lambda_n)$.

\end{proof}

\begin{lemma}\label{Lemma5}

If $a_{n-1} = s_{n-1} \cdots s_3t_k$ with $0 \leq k \leq e-1$ and $a_n = s_n \cdots s_{i}$ with $3 \leq i \leq n$, then $s_n (a_{n-1}a_n)$ belongs to Span$(\Lambda_{n-1}\Lambda_n)$.

\end{lemma}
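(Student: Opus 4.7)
The plan is to reduce the computation of $s_n(a_{n-1}a_n) = s_n s_{n-1} \cdots s_3 t_k s_n s_{n-1} \cdots s_i$ to an explicit element of $\Lambda_{n-1}\Lambda_n$, by exploiting the commutation of $t_k$ with $s_j$ for every $j \geq 4$, together with Lemma \ref{Lemma2}, which has already done the hard braid manipulations in the purely $s$-letter setting. I split according to whether $i \geq 4$ or $i = 3$.

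In the case $i \geq 4$, every $s_j$ occurring in $s_n s_{n-1} \cdots s_i$ has $j \geq 4$, so $t_k$ slides freely to the right of this block and the expression becomes $s_n s_{n-1} \cdots s_3 \cdot s_n s_{n-1} \cdots s_i \cdot t_k$. The first two blocks fit exactly the hypothesis of Lemma \ref{Lemma2} with inner indices $3$ and $i$; since $3 < i$, that lemma gives the equality $s_n s_{n-1} \cdots s_3 \cdot s_n s_{n-1} \cdots s_i = s_{n-1} s_{n-2} \cdots s_{i-1} \cdot s_n s_{n-1} \cdots s_3$. Combining with the trailing $t_k$ yields $(s_{n-1} s_{n-2} \cdots s_{i-1})(s_n s_{n-1} \cdots s_3 t_k) \in \Lambda_{n-1}\Lambda_n$, since $3 \leq i-1 \leq n-1$ and $s_n s_{n-1} \cdots s_3 t_k \in \Lambda_n$.

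In the remaining case $i = 3$, the same commutation now leaves a residual $s_3$ to the right of $t_k$: we get $s_n s_{n-1} \cdots s_3 \cdot s_n s_{n-1} \cdots s_4 \cdot t_k s_3$. Applying Lemma \ref{Lemma2} to the first two blocks (inner indices $3 < 4$, again an equality case) turns the expression into $s_{n-1} s_{n-2} \cdots s_3 \cdot s_n s_{n-1} \cdots s_3 t_k s_3$. The key manoeuvre is to rewrite the final block $s_n s_{n-1} \cdots s_3 t_k s_3$ using the braid relation $s_3 t_k s_3 = t_k s_3 t_k$: it equals $s_n s_{n-1} \cdots s_4 (s_3 t_k s_3) = s_n s_{n-1} \cdots s_4 \cdot t_k s_3 t_k$, and then commuting $t_k$ back past $s_4, \ldots, s_n$ produces $t_k s_n s_{n-1} \cdots s_3 t_k$. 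Substituting, the whole expression becomes $(s_{n-1} s_{n-2} \cdots s_3 t_k)(s_n s_{n-1} \cdots s_3 t_k)$, a genuine product of an element of $\Lambda_{n-1}$ with an element of $\Lambda_n$.

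The only genuinely delicate point, and the step I expect to be the main obstacle, is the careful placement of the braid relation $s_3 t_k s_3 = t_k s_3 t_k$ in the case $i=3$, so that the freed $t_k$ lands immediately to the right of $s_{n-1} s_{n-2} \cdots s_3$ and completes it to the prefix $s_{n-1} s_{n-2} \cdots s_3 t_k$ of an element of $\Lambda_{n-1}$. Everything else reduces to commutations of $t_k$ with the higher $s_j$'s and to already-established identities; in particular no quadratic relations are invoked, so the equalities actually hold already in $B(e,e,n)$.
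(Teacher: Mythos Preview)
Your proof is correct, and it reaches exactly the same final elements as the paper: $(s_{n-1}\cdots s_{i-1})(s_n\cdots s_3 t_k)$ for $i>3$ and $(s_{n-1}\cdots s_3 t_k)(s_n\cdots s_3 t_k)$ for $i=3$.

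The difference is one of packaging rather than of content. The paper carries out the braid manipulations directly on $s_n s_{n-1}\cdots s_3 t_k\, s_n\cdots s_i$, shifting each letter $s_n, s_{n-1}, \ldots$ from the right block across $s_{n-1}\cdots s_3 t_k$ and applying a braid relation at every step, with $t_k$ kept in place throughout. You instead first slide $t_k$ out past $s_n,\ldots,s_4$, recognize the remaining expression as precisely the $i<i'$ case in the proof of Lemma~\ref{Lemma2} (which, as you correctly observe, uses only commutations and braid relations and is therefore an equality in $B(e,e,n)$), and then slide $t_k$ back in, invoking $s_3 t_k s_3 = t_k s_3 t_k$ when $i=3$. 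Your route is a bit more modular since it recycles Lemma~\ref{Lemma2} rather than redoing its computation; the paper's is self-contained. Either way the underlying braid-group identity is the same.
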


\begin{proof}

We have $s_n(a_{n-1}a_n)$ is equal to\\
$\begin{array}{ll}
s_ns_{n-1} \cdots s_3t_k \underline{s_n} \cdots s_i & \stackrel{(1)}{=}\\
\underline{s_{n}s_{n-1}s_n} \cdots s_3t_ks_{n-1} \cdots s_i  & \stackrel{(2)}{=}\\
s_{n-1}s_{n}s_{n-1} \cdots s_3t_k \underline{s_{n-1}} \cdots s_i & \stackrel{(1)}{=}\\
s_{n-1}s_{n}\underline{s_{n-1}s_{n-2}s_{n-1}} \cdots s_3t_k s_{n-2} \cdots s_i & \stackrel{(2)}{=}\\
s_{n-1}s_{n} \underline{s_{n-2}}s_{n-1}s_{n-2} \cdots s_3t_k s_{n-2} \cdots s_i & \stackrel{(1)}{=}\\
\end{array}$\\
$\begin{array}{l}
s_{n-1} s_{n-2} s_{n}s_{n-1} \cdots s_3t_k \underline{s_{n-2}} \cdots \underline{s_i}.
\end{array}$\\ Now we apply the same operations for $\underline{s_{n-2}}$, $\cdots$, $\underline{s_i}$.

\emph{If $i=3$}, we get $s_{n-1} \cdots s_3 s_{n}s_{n-1} \cdots \underline{s_3t_ks_3}$. Next, we apply a braid relation to get $s_{n-1} \cdots s_3 s_{n}s_{n-1} \cdots \underline{t_k}s_3t_k$, then we shift $\underline{t_k}$ to the left and we finally get $s_{n-1} \cdots s_3 t_k s_{n}s_{n-1} \cdots s_3t_k$ which belongs to Span$(\Lambda_{n-1}\Lambda_n)$.

\emph{If $i > 3$}, we directly get 
$ s_{n-1} \cdots s_is_{i-1} s_{n} \cdots s_3t_k$ that also belongs to \begin{small}Span$(\Lambda_{n-1}\Lambda_n)$.\end{small}

\end{proof}

\begin{lemma}\label{Lemma6}

If $a_{n-1} = s_{n-1} \cdots s_3t_k$ with $0 \leq k \leq e-1$ and $a_n = s_n \cdots s_{3}t_l$ with $0 \leq l \leq e-1$, then $s_n (a_{n-1}a_n)$ belongs to Span$(\Lambda_{n-1}\Lambda_n)$.

\end{lemma}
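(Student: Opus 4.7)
The plan is to piggy-back on the computation already done in Lemma \ref{Lemma5} for the case $i = 3$, and then absorb the trailing letter $t_l$ by means of Lemma \ref{LemmaTLTKInVect}. Write
\[ s_n(a_{n-1} a_n) = s_n s_{n-1} \cdots s_3 t_k \cdot s_n s_{n-1} \cdots s_3 t_l. \]
The rightmost $t_l$ sits strictly to the right of every position touched by the Lemma \ref{Lemma5} manipulations (which all act on the prefix $s_n s_{n-1} \cdots s_3 t_k \cdot s_n s_{n-1} \cdots s_3$), so it can simply be carried along as a passive tail.

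First I would run through the same sequence of braid relations and commutations of Lemma \ref{Lemma5} at $i = 3$: shift the underlined second $s_n$ to the left, apply the braid $s_n s_{n-1} s_n = s_{n-1} s_n s_{n-1}$, propagate the transformations down through $s_{n-2}, s_{n-3}, \dots, s_4$, and finish with the braid $s_3 t_k s_3 = t_k s_3 t_k$ followed by the leftward shift of the newly created $t_k$ past all the generators $s_j$ with $j \geq 4$ and $s_n$ (none of which involve $s_3$). The outcome is the equality
\[ s_n(a_{n-1} a_n) = s_{n-1} \cdots s_3 t_k \cdot s_n s_{n-1} \cdots s_3 \cdot t_k t_l. \]

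Next I would apply Lemma \ref{LemmaTLTKInVect} to expand $t_k t_l$ as an $R_0$-linear combination of elements of $\Lambda_2$, each being either $1$, some $t_j$ with $0 \leq j \leq e-1$, or $t_j t_0$ with $1 \leq j \leq e-1$. Substituting term by term, each resulting summand has the shape $(s_{n-1} \cdots s_3 t_k)(s_n s_{n-1} \cdots s_3 \cdot u)$ for a suitable $u \in \Lambda_2$. The left factor lies in $\Lambda_{n-1}$ by definition. For the right factor, the possibilities $u = 1$, $u = t_j$, and $u = t_j t_0 = t_j s_2$ produce respectively $s_n \cdots s_3$, $s_n \cdots s_3 t_j$, and $s_n \cdots s_3 t_j s_2$, and all three patterns appear in $\Lambda_n$ (the last one being the $i' = 2$ instance of $s_n \cdots s_3 t_k s_2 \cdots s_{i'}$, using $s_2 = t_0$). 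Therefore each summand belongs to $\Lambda_{n-1} \Lambda_n$, and by linearity $s_n(a_{n-1} a_n) \in \mathrm{Span}(\Lambda_{n-1} \Lambda_n)$.

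The only non-routine point is verifying that the chain of moves used in Lemma \ref{Lemma5} really does commute with the presence of the rightmost $t_l$; this requires no new argument, since every move happens strictly to the left of the final factor. Once this bookkeeping is acknowledged, the lemma follows immediately from Lemmas \ref{Lemma5} and \ref{LemmaTLTKInVect} without any further case analysis.
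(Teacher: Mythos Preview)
Your proof is correct and follows essentially the same approach as the paper: invoke the $i=3$ case of Lemma~\ref{Lemma5} to rewrite $s_n(a_{n-1}a_n)$ as $s_{n-1}\cdots s_3 t_k\, s_n\cdots s_3\, t_k t_l$, then expand $t_k t_l$ via Lemma~\ref{LemmaTLTKInVect}. The paper's version is terser and leaves the verification that each resulting term lies in $\Lambda_{n-1}\Lambda_n$ implicit, whereas you spell out the three possible shapes of $u\in\Lambda_2$ and match them against the defining patterns of $\Lambda_n$; this extra care is fine but not strictly needed.
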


\begin{proof}

By Lemma \ref{Lemma5}, one can write
$s_n(a_{n-1}a_n) = s_{n-1} \cdots s_3 t_k s_{n}s_{n-1} \cdots s_3t_kt_l$.
By Lemma \ref{LemmaTLTKInVect} where we compute $t_kt_l$, we directly deduce that $s_n(a_{n-1}a_n)$ belongs to Span$(\Lambda_{n-1}\Lambda_n)$.

\end{proof}

\begin{lemma}\label{Lemma7}

If $a_{n-1} = s_{n-1} \cdots s_3t_k$ with $0 \leq k \leq e-1$ and $a_n = s_n \cdots s_{3}t_l s_2 \cdots s_i$ with $2 \leq i \leq n$ and $1 \leq l \leq e-1$, then $s_n (a_{n-1}a_n)$ belongs to Span$(S_{n-1}^{*} \Lambda_n)$.

\end{lemma}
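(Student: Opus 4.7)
The plan is to mimic the reductions used in Lemmas \ref{Lemma5} and \ref{Lemma6}. Applying the braiding computation from the proof of Lemma \ref{Lemma5} to the left factor $s_n\cdot s_{n-1}\cdots s_3 t_k\cdot s_n\cdots s_3$, and then multiplying by the trailing $t_l\,s_2\cdots s_i$ on the right, one obtains the identity
\[
s_n(a_{n-1}a_n)=s_{n-1}\cdots s_3 t_k\cdot s_ns_{n-1}\cdots s_3\cdot(t_kt_l)\cdot s_2\cdots s_i.
\]
By Lemma \ref{LemmaTLTKInVect}, $t_kt_l$ is an $R_0$-linear combination of elements of $\Lambda_2=\{1\}\cup\{t_m:0\le m\le e-1\}\cup\{t_mt_0:1\le m\le e-1\}$. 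Since the prefix $s_{n-1}\cdots s_3 t_k$ already lies in $S_{n-1}^{*}$, it suffices to show that for each $y\in\Lambda_2$, the product $s_n\cdots s_3\cdot y\cdot s_2\cdots s_i$ belongs to $\mathrm{Span}(S_{n-1}^{*}\Lambda_n)$.

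Three kinds of subcases then arise. First, when $y=t_m$ with $1\le m\le e-1$, the word $s_n\cdots s_3 t_m s_2\cdots s_i$ is already an element of $\Lambda_n$. Second, when $i=2$, each of the remaining choices $y\in\{1,t_0,t_mt_0\}$ collapses, after a single application of the quadratic relation $s_2^2=as_2+1$ and the convention $t_0=s_2$, into a sum of elements of $\Lambda_n$. The genuinely delicate case is $i\ge 3$ with $y\in\{1,t_0,t_mt_0\}$: after absorbing any $s_2^2$ via $s_2^2=as_2+1$, each resulting term is either of the form $s_n\cdots s_4 s_3^2 s_4\cdots s_i$, in which case Lemma \ref{LmScholie1} applies directly, or of the form $s_n\cdots s_3 t_m s_3\cdots s_i$ for some $0\le m\le e-1$.

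For this last form, the strategy is to apply the braid relation $s_3t_ms_3=t_ms_3t_m$ and then to commute both extracted copies of $t_m$ past $s_4,\dots,s_n$ and past $s_4,\dots,s_i$, yielding
\[
s_n\cdots s_3 t_m s_3\cdots s_i=t_m\cdot\bigl(s_ns_{n-1}\cdots s_4 s_3 s_4\cdots s_i\bigr)\cdot t_m.
\]
The main obstacle is then the type-$A$ bookkeeping identity
\[
s_ns_{n-1}\cdots s_4 s_3 s_4\cdots s_i=(s_3 s_4\cdots s_{i-1})(s_ns_{n-1}\cdots s_3)\qquad(3\le i\le n),
\]
with the convention that $s_3\cdots s_{i-1}$ is empty when $i=3$. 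I would prove this by induction on $i$, the inductive step reducing to the sub-identity $(s_n\cdots s_3)s_i=s_{i-1}(s_n\cdots s_3)$ for $4\le i\le n$, which itself follows from the braid relation $s_{i-1}s_is_{i-1}=s_is_{i-1}s_i$ and the commutations $s_as_b=s_bs_a$ for $|a-b|>1$. Substituting back yields $t_m\cdot(s_3\cdots s_{i-1})\cdot(s_n\cdots s_3 t_m)\in S_{n-1}^{*}\cdot\Lambda_n$, closing the argument.
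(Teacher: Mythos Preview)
Your proof is correct and follows essentially the same route as the paper. Both start from the identity $s_n(a_{n-1}a_n)=s_{n-1}\cdots s_3 t_k\cdot s_n\cdots s_3\cdot(t_kt_l)\cdot s_2\cdots s_i$, expand $t_kt_l$ over $\Lambda_2$, and reduce the resulting terms to the two key shapes $s_n\cdots s_4s_3^2s_4\cdots s_i$ (handled by Lemma~\ref{LmScholie1}) and $s_n\cdots s_3 t_m s_3\cdots s_i$; your organization of the case split (by the form of $y\in\Lambda_2$) differs cosmetically from the paper's (which splits on whether the tail begins with $s_2$ or $s_3$ and whether $x=0$), but the subcases coincide. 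For the last shape, the paper interleaves braid moves and shifts step by step to reach $(t_m s_3\cdots s_{i-1})(s_n\cdots s_3 t_m)$, whereas you do one braid $s_3t_ms_3=t_ms_3t_m$, commute both $t_m$'s out, and then invoke the type-$A$ identity $s_n\cdots s_4 s_3 s_4\cdots s_i=(s_3\cdots s_{i-1})(s_n\cdots s_3)$; this is exactly the computation already carried out at the end of the proof of Lemma~\ref{LmScholie1}, so the two arguments are equivalent.
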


\begin{proof}

By the previous lemma, we have \begin{small}$s_n(a_{n-1}a_n) = s_{n-1}\cdots s_3t_ks_n \cdots s_3 t_k t_l (s_2 \cdots s_i)$.\end{small}
By Lemma \ref{LemmaTLTKInVect}, the case $i = 2$ is obvious. Suppose $i \geq 3$.
After replacing $t_kt_l$ by its value given in Lemma \ref{LemmaTLTKInVect}, we have two different terms in $s_n(a_{n-1}a_n)$ of the form $s_{n-1}\cdots s_3t_ks_n \cdots s_3 t_x (s_2 \cdots s_i)$ with $0 \leq x \leq e-1$ and of the form\\ $s_{n-1}\cdots s_3t_ks_n \cdots s_3 t_x (s_3 \cdots s_i)$ with $0 \leq x \leq e-1$.

For terms of the form $s_{n-1}\cdots s_3t_ks_n \cdots s_3 t_x (s_3 \cdots s_i)$ with $0 \leq x \leq e-1$, we have\\
$\begin{array}{ll}
s_{n-1} \cdots s_3 t_k s_n \cdots \underline{s_3 t_x (s_3} \cdots s_i) & \stackrel{(2)}{=}\\
s_{n-1} \cdots s_3 t_k s_n \cdots \underline{t_x} s_3 t_x(s_4 \cdots s_i) & \stackrel{(1)}{=}\\
s_{n-1} \cdots s_3 t_kt_x s_n \cdots s_3 t_x(\underline{s_4} \cdots s_i) & \stackrel{(1)}{=}\\
s_{n-1} \cdots s_3 t_kt_x s_n \cdots \underline{s_4 s_3 s_4}t_x (s_5\cdots s_i) & \stackrel{(2)}{=}\\
s_{n-1} \cdots s_3 t_kt_x s_n \cdots \underline{s_3} s_4 s_3 t_x (s_5\cdots s_i) & \stackrel{(1)}{=}\\
\end{array}$\\
$\begin{array}{l}
s_{n-1} \cdots s_3 t_kt_x s_3 s_n \cdots s_3 t_x (\underline{s_5} \cdots \underline{s_{i}}).
\end{array}$\\
We apply the same operations for the underlined letters to get\\ $s_{n-1} \cdots s_3 t_kt_xs_3 \cdots s_{i-1} \underline{s_n \cdots s_3t_x}$ which belongs to Span$(S_{n-1}^{*} \Lambda_n)$.

Consider the terms of the form $s_{n-1}\cdots s_3t_ks_n \cdots s_3 t_x (s_2 \cdots s_i)$ with $0 \leq x \leq e-1$.\\ If $x \neq 0$, they belong to Span$(\Lambda_{n-1}\Lambda_n)$.\\ If $x =0$, we have \begin{small}$s_{n-1}\cdots s_3t_ks_n \cdots s_3 t_0 (s_2s_3 \cdots s_i) \in R_0 s_{n-1}\cdots s_3t_ks_n \cdots s_3 t_0 s_3 \cdots s_i + R_0 s_{n-1}\cdots s_3t_ks_n \cdots s_4s_3^2s_4 \cdots s_i$.\end{small} The first term correspond to the previous case (with $x=0$) and then belongs to Span$(S_{n-1}^{*} \Lambda_n)$. By Lemma \ref{LmScholie1}, the second term also belongs to Span$(S_{n-1}^{*} \Lambda_n)$.

\end{proof}

\begin{lemma}\label{Lemma8}

If $a_{n-1} = s_{n-1} \cdots s_3t_ks_2 \cdots s_i$ with $2 \leq i \leq n-1$, $1 \leq k \leq e-1$ and $a_n = s_n \cdots s_{i'}$ with $3 \leq i' \leq n$, then $s_n (a_{n-1}a_n)$ belongs to Span$(\Lambda_{n-1}\Lambda_n)$.

\end{lemma}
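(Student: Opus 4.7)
The plan is to factor $a_{n-1} = b \cdot c$ with $b := s_{n-1}s_{n-2}\cdots s_3 t_k$ (the $a_{n-1}$-shape appearing in Lemma \ref{Lemma5}) and $c := s_2 s_3 \cdots s_i$ (the extra tail), and to reduce the computation of $s_n(a_{n-1}a_n)$ to Lemma \ref{Lemma5} applied to $s_n \cdot b \cdot a_n'$ for a suitable $a_n' \in \Lambda_n$. This requires rewriting the product $c \cdot a_n$ using only commutations and braid relations. The key combinatorial identity, which I would establish by induction on $i - i'$ using commutations $s_j s_k = s_k s_j$ for $|j-k| > 1$ together with the braid relations $s_j s_{j+1} s_j = s_{j+1} s_j s_{j+1}$, is
$$(s_2 \cdots s_i)(s_m \cdots s_{i'}) = (s_m \cdots s_{i'+1})(s_2 \cdots s_{i+1}) \qquad \text{for } 2 \leq i' \leq i \leq m-1.$$

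For the sub-case $i \leq n-2$, the generator $s_n$ commutes with $c$, so I slide it leftward through $c$ and invoke the identity with $m = n-1$ to obtain $c \cdot a_n = s_n (s_{n-1} \cdots s_{i'+1})(s_2 \cdots s_{i+1})$ when $i' \leq i$; the case $i' = i+1$ follows from the same identity, and the case $i' \geq i+2$ is immediate because $c$ and $a_n$ then commute. In each subcase, Lemma \ref{Lemma5} applied to $s_n \cdot b \cdot (s_n s_{n-1} \cdots s_j)$ for the appropriate $j \geq 3$ produces an element $(s_{n-1} \cdots s_{j-1})(s_n \cdots s_3 t_k)$ of $\Lambda_{n-1}\Lambda_n$, and concatenating with the trailing $s_2 \cdots s_{i+1}$ (or $s_2 \cdots s_i$) keeps the final expression inside $\Lambda_{n-1}\Lambda_n$.

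The sub-case $i = n-1$ is the delicate one, since the tail $c$ ends with $s_{n-1}$, which fails to commute with $s_n$. Writing $a_{n-1} = \tilde a \cdot s_{n-1}$ with $\tilde a := s_{n-1} s_{n-2} \cdots s_3 t_k s_2 \cdots s_{n-2}$, the braid relation $s_{n-1} s_n s_{n-1} = s_n s_{n-1} s_n$ gives $s_{n-1} \cdot (s_n s_{n-1} \cdots s_{i'}) = s_n(s_{n-1} \cdots s_{i'}) s_n$, so that $s_n a_{n-1} a_n = (s_n \tilde a s_n)(s_{n-1} \cdots s_{i'}) s_n$. A second braid application, together with the commutativity of $s_n$ with the indices-$\leq n-2$ portion of $\tilde a$, reduces $s_n \tilde a s_n$ to $s_{n-1} s_n \tilde a$, yielding
$$s_n a_{n-1} a_n = s_{n-1} s_n \cdot a_{n-1} \cdot (s_{n-2} \cdots s_{i'}) \cdot s_n.$$
For $i' = n$ this collapses immediately to $a_{n-1} \cdot s_n s_{n-1} \in \Lambda_{n-1}\Lambda_n$; for $3 \leq i' \leq n-1$, iterated braid relations applied to $a_{n-1} \cdot (s_{n-2} \cdots s_{i'})$, combined with the quadratic relation $s_{n-1}^2 = a s_{n-1} + 1$ used to absorb the repeated $s_{n-1}$'s generated by the braid moves, rewrite this product as an $R_0$-linear combination of elements of $\Lambda_{n-1}$ each multiplied on the right by a factor that, together with the final $s_n$, lands in $\Lambda_n$.

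The main obstacle will be the bookkeeping for $i = n-1$ with $3 \leq i' \leq n-1$: each braid move near the trailing $s_{n-1}$ eventually forces an application of $s_{n-1}^2 = a s_{n-1} + 1$, spawning lower-length $a$-multiple terms whose membership in $\Lambda_{n-1}\Lambda_n$ must be verified individually. The subcases with $i \leq n-2$, by contrast, reduce very directly to Lemma \ref{Lemma5} via the central identity above.
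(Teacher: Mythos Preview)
Your reduction for $i \leq n-2$ via the identity $(s_2\cdots s_i)(s_m\cdots s_{i'}) = (s_m\cdots s_{i'+1})(s_2\cdots s_{i+1})$ together with Lemma~\ref{Lemma5} is correct and gives a clean alternative organization. The paper does not invoke Lemma~\ref{Lemma5} here; instead it splits along $i<i'$ versus $i\geq i'$ and pushes the letters of $a_n$ leftward one by one through $a_{n-1}$ using only commutations and braid relations, obtaining in every case a \emph{single} element of $\Lambda_{n-1}\Lambda_n$ (no linear combinations). Both routes work equally well for $i\leq n-2$.

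Your treatment of $i=n-1$, however, has a concrete error and a misconception. For $i'=n$ you assert that $s_n a_{n-1} s_n$ ``collapses immediately to $a_{n-1}\cdot s_n s_{n-1}$''; this is false already for $n=3$, where $s_3(t_k t_0)s_3 \neq (t_k t_0)(s_3 t_0)$ as the left side is diagonal in $G(e,e,3)$ while the right side is not. The correct observation is trivial: $s_n a_{n-1}s_n = s_n s_{n-1}\cdots s_3 t_k s_2\cdots s_{n-1}s_n$ is literally an element of $\Lambda_n$, so $s_n(a_{n-1}a_n)\in 1\cdot\Lambda_n\subset\Lambda_{n-1}\Lambda_n$.

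For $3\leq i'\leq n-1$ you anticipate that braid moves will force quadratic relations $s_{n-1}^2=as_{n-1}+1$ and spawn multiple terms. This is not so: the element $s_n(a_{n-1}a_n)$ lands on a single element of $\Lambda_{n-1}\Lambda_n$ using braid relations alone. For instance, with $n=4$, $i=3$, $i'=3$ one has
\[
s_4(s_3 t_k s_2 s_3)(s_4 s_3)
= s_4 s_3 t_k s_2 (s_3 s_4 s_3)
= s_4 s_3 s_4\, t_k s_2 s_3 s_4
= s_3 s_4 s_3\, t_k s_2 s_3 s_4
= s_3\cdot(s_4 s_3 t_k s_2 s_3 s_4),
\]
which is the paper's formula $s_{n-1}\cdots s_{i'}\cdot s_n\cdots s_3 t_k s_2\cdots s_{i+1}$. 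The point you are missing is that the trailing $s_{n-1}$ of $a_{n-1}$ always finds a partner $s_{n-1}$ at the head of (the already-shortened) $a_n$, so each pass produces a braid move rather than a square. There is no obstacle and no bookkeeping of $a$-multiple terms; the case $i=n-1$ is not harder than $i\leq n-2$, and the paper's $i<i'$ / $i\geq i'$ dichotomy handles it uniformly.
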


\begin{proof}

\emph{Suppose $i < i'$}. We have $s_n(a_{n-1}a_n)$ is equal to\\
$\begin{array}{ll}
s_n \cdots s_3t_ks_2 \cdots s_i \underline{s_n} \cdots s_{i'} & \stackrel{(1)}{=}\\
\underline{s_n s_{n-1}s_n} \cdots s_3t_ks_2 \cdots s_i s_{n-1} \cdots s_{i'} & \stackrel{(2)}{=}\\
s_{n-1} s_{n}s_{n-1} \cdots s_3t_ks_2 \cdots s_i \underline{s_{n-1}} \cdots s_{i'} & \stackrel{(1)}{=}\\
s_{n-1} s_{n}\underline{s_{n-1}s_{n-2}s_{n-1}} \cdots s_3t_ks_2 \cdots s_i s_{n-2} \cdots s_{i'} & \stackrel{(2)}{=}\\
s_{n-1} s_{n} \underline{s_{n-2}} s_{n-1}s_{n-2} \cdots s_3t_ks_2 \cdots s_i s_{n-2} \cdots s_{i'} & \stackrel{(1)}{=}\\
\end{array}$\\
$\begin{array}{l}
s_{n-1} s_{n-2} s_{n} s_{n-1}s_{n-2} \cdots s_3t_ks_2 \cdots s_i \underline{s_{n-2}} \cdots \underline{s_{i'+1}}s_{i'}.
\end{array}$\\
We apply the same operations to the underlined letters and\\
we get $s_{n-1} \cdots s_{i'}s_n\cdots s_3t_ks_2 \cdots s_i s_{i'}$.\\
\emph{If $i'=i+1$}, we directly have $s_n(a_{n-1}a_n) \in$ Span$(\Lambda_{n-1}\Lambda_n)$.\\
\emph{If $i' > i+1$}, then we write $s_{i'+1}s_{i'}s_{i'-1}$ in the underlined word of\\ $s_{n-1} \cdots s_{i'}\underline{s_n\cdots s_3}t_ks_2 \cdots s_i s_{i'}$ and get\\
$\begin{array}{ll}
s_{n-1} \cdots s_{i'}s_n \cdots s_{i'+1}s_{i'}s_{i'-1}\cdots s_3t_ks_2 \cdots s_i \underline{s_{i'}} & \stackrel{(1)}{=}\\
s_{n-1} \cdots s_{i'}s_n \cdots s_{i'+1}\underline{s_{i'}s_{i'-1}s_{i'}} \cdots s_3t_ks_2 \cdots s_i  & \stackrel{(2)}{=}\\
s_{n-1} \cdots s_{i'}s_n \cdots s_{i'+1}\underline{s_{i'-1}}s_{i'}s_{i'-1} \cdots s_3t_ks_2 \cdots s_i & \stackrel{(1)}{=}\\
\end{array}$\\
$\begin{array}{l}
s_{n-1} \cdots s_{i'-1}s_n \cdots s_3t_ks_2 \cdots s_i
\end{array}$ which belongs to Span$(\Lambda_{n-1}\Lambda_n)$.

\emph{Suppose $i \geq i'$}. We have $s_n(a_{n-1}a_n)$ is equal to\\
$s_n \cdots s_3t_ks_2 \cdots s_i \underline{s_n} \cdots s_{i'}$. We shift $\underline{s_n}$ to the left and apply a braid relation to get
$s_{n-1} s_{n} s_{n-1} \cdots s_3t_ks_2 \cdots s_i \underline{s_{n-1} \cdots s_{i'}}$. Write $s_{i+2}s_{i+1}$ in the underlined word to get
$s_{n-1} s_{n} s_{n-1} \cdots s_3t_ks_2 \cdots s_i \underline{s_{n-1}} \cdots \underline{s_{i+2}}s_{i+1} \cdots s_{i'}$. We apply the same operations to the underlined letters to get\\
$\begin{array}{ll}
s_{n-1}\cdots s_{i+1}s_n \cdots s_3t_ks_2 \cdots \underline{s_i s_{i+1} s_i} s_{i-1} \cdots s_{i'} & \stackrel{(2)}{=}\\
s_{n-1}\cdots s_{i+1}s_n \cdots s_3t_ks_2 \cdots \underline{s_{i+1}} s_{i} s_{i+1} s_{i-1} \cdots s_{i'} & \stackrel{(1)}{=}\\
\end{array}$\\
$\begin{array}{l}
s_{n-1}\cdots s_{i}s_n \cdots s_3t_ks_2 \cdots s_{i-1}s_{i} s_{i+1}s_{i-1} \cdots s_{i'}
\end{array}$ (The details of the computation is left to the reader). Then we have\\
$\begin{array}{ll}
s_{n-1}\cdots s_{i}s_n \cdots s_3t_ks_2 \cdots s_{i-1}s_{i}s_{i+1}\underline{s_{i-1}} \cdots s_{i'} & \stackrel{(1)}{=}\\
s_{n-1}\cdots s_{i}s_n \cdots s_3t_ks_2 \cdots \underline{s_{i-1}s_{i}s_{i-1}}s_{i+1}s_{i-2} \cdots s_{i'} & \stackrel{(2)}{=}\\
s_{n-1}\cdots s_{i}s_n \cdots s_3t_ks_2 \cdots \underline{s_{i}}s_{i-1}s_{i}s_{i+1}s_{i-2} \cdots s_{i'} & \stackrel{(1)}{=}\\
\end{array}$\\
$\begin{array}{l}
s_{n-1}\cdots s_{i-1}s_n \cdots s_3t_ks_2 \cdots s_{i-2}s_{i-1}s_{i}s_{i+1}\underline{s_{i-2}} \cdots \underline{s_{i'+1}}s_{i'}.
\end{array}$ We apply the same operations to the underlined letters and we finally get\\
$s_{n-1}\cdots s_{i'+1}s_n \cdots s_3t_k\underline{s_2s_3 \cdots s_is_{i+1}}s_{i'}$. We write $s_{i'-1}s_{i'}s_{i'+1}$ in the underlined word and get $s_{n-1}\cdots s_{i'+1}s_n \cdots s_3t_ks_2s_3 \cdots s_{i'-1}s_{i'}s_{i'+1}\cdots s_is_{i+1} \underline{s_{i'}}$. We shift $\underline{s_{i'}}$ to the left. We get $s_{n-1}\cdots s_{i'+1}s_n \cdots s_3t_ks_2s_3 \cdots s_{i'-1}\underline{s_{i'}s_{i'+1}s_{i'}}\cdots s_is_{i+1}$. We apply a braid relation and get $s_{n-1}\cdots s_{i'+1}s_n \cdots s_3t_ks_2s_3 \cdots s_{i'-1} \underline{s_{i'+1}}s_{i'}s_{i'+1}\cdots s_is_{i+1}$. Now we shift $\underline{s_{i'+1}}$ to the left and get\\ 
$\begin{array}{ll}
s_{n-1}\cdots s_{i'+1}s_n \cdots \underline{s_{i'+1}s_{i'}s_{i'+1}}s_{i'-1}\cdots s_3t_ks_2 \cdots s_{i+1} & \stackrel{(2)}{=}\\
s_{n-1}\cdots s_{i'+1}s_n \cdots \underline{s_{i'}}s_{i'+1}s_{i'}s_{i'-1}\cdots s_3t_ks_2 \cdots s_{i+1} & \stackrel{(1)}{=}\\
s_{n-1}\cdots s_{i'+1}s_{i'}s_n \cdots s_{i'+1}s_{i'}s_{i'-1}\cdots s_3t_ks_2 \cdots s_{i+1} & = \\
\end{array}$\\
$\begin{array}{l}
s_{n-1} \cdots s_{i'}s_n \cdots s_3t_ks_2 \cdots s_{i+1},
\end{array}$ which belongs to Span$(\Lambda_{n-1}\Lambda_n)$.

\end{proof}

\begin{lemma}\label{Lemma9}

If $a_{n-1} = s_{n-1} \cdots s_3t_ks_2 \cdots s_i$ with $2 \leq i \leq n-1$, \mbox{$1 \leq k \leq e-1$} and $a_n = s_n s_{n-1} \cdots s_{3}t_l$ with $0 \leq l \leq e-1$, then $s_n (a_{n-1}a_n)$ belongs to Span$(S_{n-1}^{*} \Lambda_n)$.

\end{lemma}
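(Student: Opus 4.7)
The strategy is to factor out $t_l$ from $a_n$, apply Lemma \ref{Lemma8} to reduce to a known expression, and then handle the trailing $t_l$ via braid and quadratic relations together with Lemma \ref{LemmaTLTKInVect}.

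\emph{Step 1.} Write $a_n = (s_n s_{n-1} \cdots s_3)\, t_l$ and first compute $s_n \cdot a_{n-1} \cdot (s_n s_{n-1} \cdots s_3)$ by invoking Lemma \ref{Lemma8} with $i' = 3$. Both sub-cases $i = 2$ (which falls under the $i < i'$ branch with $i' = i+1$) and $i \geq 3$ (the $i \geq i'$ branch) yield, after the manipulations in the proof of Lemma \ref{Lemma8}, the common expression
\[
s_n \cdot a_{n-1} \cdot (s_n \cdots s_3) \;=\; s_{n-1} s_{n-2} \cdots s_3 \cdot s_n s_{n-1} \cdots s_3 \cdot t_k s_2 s_3 \cdots s_{i+1},
\]
which lies in $\Lambda_{n-1} \Lambda_n$.

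\emph{Step 2.} Right-multiplying by $t_l$ and using the commutations $s_j t_l = t_l s_j$ for $j \geq 4$ to slide $t_l$ past $s_{i+1}, s_i, \ldots, s_4$ (the slide is vacuous when $i = 2$), I obtain
\[
s_n(a_{n-1}a_n) \;=\; s_{n-1} \cdots s_3 \cdot s_n \cdots s_3 \cdot t_k t_0 s_3 t_l \cdot s_4 s_5 \cdots s_{i+1}.
\]

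\emph{Step 3.} The key step is to rewrite the middle factor $t_k t_0 s_3 t_l$ in a form compatible with the $\Lambda_n$ template. I would exploit the braid relations $t_l s_3 t_l = s_3 t_l s_3$ and $t_k s_3 t_k = s_3 t_k s_3$, the quadratic relations $s_3^2 = a s_3 + 1$ and $t_i^2 = a t_i + 1$, and Lemma \ref{LemmaTLTKInVect} (to expand products such as $t_k t_0$, $t_0 t_l$, and $t_k t_l$ into elements of $\mathrm{Span}(\Lambda_2)$). After expansion, each resulting summand has the form
\[
s_{n-1} \cdots s_3 \cdot s_n \cdots s_3 \cdot \mu \cdot (\text{tail}) \cdot s_4 \cdots s_{i+1},
\]
for some $\mu \in \Lambda_2$, with the tail built only from $s_3$ and $t_l$.

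\emph{Step 4.} Since each $\mu \in \Lambda_2$ is a word in $\{t_0,\ldots,t_{e-1}\}$, it commutes with $s_4,\ldots,s_n$. Sliding $\mu$ past $s_n \cdots s_4$ to the left absorbs it into a prefix lying in $S_{n-1}^*$. The remaining tail, involving only $s_3$, $t_l$, and $s_4 \cdots s_{i+1}$, can then be rearranged using braids around $s_3$ and the relation $s_3^2 = as_3 + 1$ (exactly as in the proof of Lemma \ref{Lemma7} and in Lemma \ref{LmScholie1}) to produce a tail of the form $s_n \cdots s_3 t_{l'}$ or $s_n \cdots s_3 t_{l'} s_2 \cdots s_{i''}$ lying in $\Lambda_n$. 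Summing over the $\mu$'s then realizes $s_n(a_{n-1}a_n)$ as an element of $\mathrm{Span}(S_{n-1}^* \Lambda_n)$.

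\emph{Main obstacle.} The hard part is Step 4: the reorganization of the tail into an element of $\Lambda_n$ requires careful casework according as $\mu \in \{1,\, t_j,\, t_j t_0\}$, and involves iterated braid moves around $s_3$ followed by quadratic reductions, in a bookkeeping parallel to the treatment of terms $s_{n-1}\cdots s_3 t_k s_n \cdots s_3 t_x s_3 \cdots s_i$ in the proof of Lemma \ref{Lemma7}. Although the computation is lengthy, it reuses techniques from Lemmas \ref{LemmaTLTKInVect}, \ref{LemmaTkT0}, \ref{LmScholie1}, and \ref{Lemma7}, and does not require any relation beyond those of the Corran--Picantin presentation together with the Hecke quadratic relations.
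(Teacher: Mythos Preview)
Your plan follows the same overall architecture as the paper's proof: reduce via Lemma~\ref{Lemma8} (Step~1), slide $t_l$ leftwards (Step~2), then disentangle the resulting $s_3, t_*$ cluster and push $t$-letters out past $s_4,\dots,s_n$ (Steps~3--4). Steps~1 and~2 match the paper exactly.

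There is, however, an imprecision in Steps~3--4 that you should be aware of. In your displayed form
\[
s_{n-1}\cdots s_3 \cdot s_n\cdots s_3 \cdot \mu \cdot (\text{tail}) \cdot s_4\cdots s_{i+1},
\]
the element $\mu\in\Lambda_2$ sits to the \emph{right} of the terminal $s_3$ of $s_n\cdots s_3$, yet in Step~4 you propose to slide $\mu$ past ``$s_n\cdots s_4$''. Since $\mu$ is built from $t_0,\dots,t_{e-1}$, it does \emph{not} commute with that intervening $s_3$, so the slide as stated is not legal. The paper circumvents this by first invoking the explicit computation of Case~5 of Proposition~\ref{PropCasen=3}: the block $s_3 t_k t_0 s_3 t_l$ (with the leftmost $s_3$ borrowed from $s_n\cdots s_3$) is reduced to $R_0$-combinations of $s_3 t_x s_3 t_l$ and $s_3 t_x s_3 t_l s_3$. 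Only then does one apply the braid relation $s_3 t_x s_3 = t_x s_3 t_x$ to produce a \emph{single} $t_x$ on the far left of this block, which can be slid past $s_4,\dots,s_n$. In other words, what gets extracted is not a general $\mu\in\Lambda_2$ but a single $t_x$, and the extraction requires a braid move through $s_3$ before any commutation.

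Your ``Main obstacle'' paragraph hints that you anticipate braid moves around $s_3$, so this is more a matter of imprecise bookkeeping than a missing idea. If you replace the vague ``$\mu\in\Lambda_2$'' mechanism by the concrete two-term reduction from Case~5 of Proposition~\ref{PropCasen=3}, your outline becomes the paper's proof.
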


\begin{proof}

By the final result of the computations in Lemma \ref{Lemma8}, we have $s_n(a_{n-1}a_n)$ is equal to
$s_{n-1}\cdots s_3 s_n \cdots s_3t_ks_2s_3 \cdots s_{i+1}\underline{t_l}$. We shift $\underline{t_l}$ to the left and get\\
$s_{n-1}\cdots s_3 s_n \cdots s_3t_ks_2s_3t_l \cdots s_{i+1}$. By Case 5 of Proposition \ref{PropCasen=3}, we have to deal with the following two terms:
\begin{itemize}

\item $s_{n-1}\cdots s_3s_n \cdots s_3 t_x s_3t_ls_4 \cdots s_{i+1}$ and
\item $s_{n-1}\cdots s_3s_n \cdots s_3 t_x s_3 t_l s_3 s_4 \cdots s_{i+1}$ with $1 \leq x,l \leq e-1$.
\end{itemize}
The first term is of the form\\
$\begin{array}{ll}
s_{n-1}\cdots s_3s_n \cdots \underline{s_3 t_x s_3}t_ls_4 \cdots s_{i+1} & \stackrel{(2)}{=}\\
s_{n-1}\cdots s_3s_n \cdots s_4\underline{t_x} s_3 t_x t_ls_4 \cdots s_{i+1} & \stackrel{(1)}{=}\\
s_{n-1}\cdots s_3t_xs_n \cdots s_3 t_x t_l\underline{s_4} \cdots s_{i+1} & \stackrel{(1)}{=}\\
s_{n-1}\cdots s_3t_xs_n \cdots \underline{s_4s_3s_4} t_x t_ls_5 \cdots s_{i+1} & \stackrel{(2)}{=}\\
s_{n-1}\cdots s_3t_xs_n \cdots \underline{s_3}s_4s_3 t_x t_ls_5 \cdots s_{i+1} & \stackrel{(1)}{=}\\
\end{array}$\\
$\begin{array}{l}
s_{n-1}\cdots s_3t_xs_3s_n \cdots s_4s_3 t_x t_l\underline{s_5} \cdots \underline{s_{i+2}}s_{i+1}.
\end{array}$ We apply the same operations to the underlined letters, we get $s_{n-1}\cdots s_3t_xs_3\cdots s_{i-1} s_n \cdots s_4s_3 t_x t_l \underline{s_{i+1}}$. Finally, we shift $\underline{s_{i+1}}$ to the left and get $s_{n-1}\cdots s_3t_xs_3\cdots s_{i} s_n \cdots s_4s_3 t_x t_l$.
Since $2 \leq i \leq n-1$ and by the computation of $t_xt_l$ in Lemma \ref{LemmaTLTKInVect}, the lemma is satisfied for this case.\\
The second term is equal to\\
$\begin{array}{ll}
s_{n-1}\cdots s_3s_n \cdots \underline{s_3 t_x s_3} t_l s_3 s_4 \cdots s_{i+1} & \stackrel{(2)}{=}\\
s_{n-1}\cdots s_3s_n \cdots \underline{t_x} s_3 t_x t_l s_3 s_4 \cdots s_{i+1} & \stackrel{(1)}{=}\\
\end{array}$\\
$\begin{array}{l}
s_{n-1}\cdots s_3t_xs_n \cdots  s_3 t_x t_l s_3 s_4 \cdots s_{i+1}.
\end{array}$ We replace $t_xt_l$ by its value given in Lemma \ref{LemmaTLTKInVect}, we get terms of the three following forms:
\begin{itemize}

\item $s_{n-1}\cdots s_3t_xs_n \cdots  s_3 t_m t_0 s_3 s_4 \cdots s_{i+1}$ with $1 \leq m \leq e-1$,
\item $s_{n-1}\cdots s_3t_xs_n \cdots  s_3 t_m s_3 s_4 \cdots s_{i+1}$ with $0 \leq m \leq e-1$,
\item $s_{n-1}\cdots s_3t_xs_n \cdots  s_4 s_3^2 s_4 \cdots s_{i+1}$.

\end{itemize}
The first term belongs to Span$(S_{n-1}^{*} \Lambda_n)$. The third term also belongs to Span$(S_{n-1}^{*} \Lambda_n)$. This is done by using the computation in the proof of Lemma \ref{LmScholie1}. For the second term, we have\\
$\begin{array}{ll}
s_{n-1}\cdots s_3t_xs_n \cdots s_4\underline{s_3t_ms_3}s_4 \cdots s_{i+1} & \stackrel{(2)}{=}\\
s_{n-1}\cdots s_3t_xs_n \cdots \underline{s_4 t_m}s_3 t_m\underline{s_4} \cdots s_{i+1} & \stackrel{(1)}{=}\\
s_{n-1}\cdots s_3t_xt_ms_n \cdots \underline{s_4s_3s_4}t_m s_5 \cdots s_{i+1} & \stackrel{(2)}{=}\\
s_{n-1}\cdots s_3t_xt_ms_n \cdots \underline{s_3}s_4s_3t_m s_5 \cdots s_{i+1} & \stackrel{(1)}{=}\\
\end{array}$\\
$\begin{array}{l}
s_{n-1}\cdots s_3t_xt_ms_3 s_n \cdots s_4s_3t_m \underline{s_5} \cdots \underline{s_{i+2}} s_{i+1}.
\end{array}$ We apply the same operations to the underlined letters and get $s_{n-1}\cdots s_3t_xt_ms_3\cdots s_{i-1} s_n \cdots s_3t_m s_{i+1}$. Now we shift $\underline{s_{i+1}}$ to the left and finally get $s_{n-1}\cdots s_3t_xt_ms_3\cdots s_{i} \underline{s_n \cdots s_3t_m}$ with $2 \leq i \leq n-1$ which belongs to Span$(S_{n-1}^{*} \Lambda_n)$.\\
Note that for $i = 2$, we get terms that are equal to the itemized terms (given at the beginning of this proof) after replacing $s_4 \cdots s_{i+1}$ by $1$.

\end{proof}

\begin{lemma}\label{Lemma10}

If $a_{n-1} = s_{n-1} \cdots s_3t_ks_2 \cdots s_i$ with $2 \leq i \leq n-1$, \mbox{$1 \leq k \leq e-1$} and $a_n = s_n \cdots s_{3}t_ls_2 \cdots s_{i'}$ with $1 \leq l \leq e-1$ and $2 \leq i' \leq n$, then $s_n (a_{n-1}a_n)$ belongs to Span$(S_{n-1}^{*} \Lambda_n)$.

\end{lemma}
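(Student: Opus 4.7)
My plan is to reduce Lemma 10 to Lemma \ref{Lemma9} by splitting off the trailing segment $s_2 \cdots s_{i'}$ of $a_n$. Writing $a_n = (s_n s_{n-1} \cdots s_3 t_l)(s_2 s_3 \cdots s_{i'})$ gives
\[
s_n(a_{n-1} a_n) = \bigl[\, s_n \cdot a_{n-1} \cdot (s_n s_{n-1} \cdots s_3 t_l) \,\bigr] \cdot (s_2 s_3 \cdots s_{i'}),
\]
and the bracketed factor is precisely the expression handled by Lemma \ref{Lemma9}, hence lies in $\mathrm{Span}(S_{n-1}^{\ast} \Lambda_n)$. Expanding it as $\sum_j c_j u_j v_j$ with $u_j \in S_{n-1}^{\ast}$ and $v_j \in \Lambda_n$, and using that $S_{n-1}^{\ast}$ is closed under multiplication (so any $S_{n-1}^{\ast}$-prefactor can be absorbed), it suffices to show that $v \cdot (s_2 s_3 \cdots s_{i'}) \in \mathrm{Span}(S_{n-1}^{\ast} \Lambda_n)$ for every $v \in \Lambda_n$.

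I then case-split on the form of $v$. If $v = 1$, the product $s_2 \cdots s_{i'}$ lies in $S_{n-1}^{\ast}$ when $i' \leq n-1$, and equals $(s_2 \cdots s_{n-1}) \cdot s_n \in S_{n-1}^{\ast} \cdot \Lambda_n$ when $i' = n$. If $v = s_n \cdots s_{i''}$ with $3 \leq i'' \leq n$, the product $s_n \cdots s_{i''} \cdot s_2 \cdots s_{i'}$ is reshaped by commuting $s_2$ past the higher $s_j$'s and applying braid relations exactly as in the final portion of the proof of Lemma \ref{Lemma4}, splitting on the subcases $i' < i''$ and $i' \geq i''$. If $v = s_n \cdots s_3 t_m$ with $m \neq 0$, the product is simply $s_n \cdots s_3 t_m s_2 \cdots s_{i'}$, which is already in $\Lambda_n$; if $m = 0$, I use the quadratic relation $t_0^2 = a t_0 + 1$ to collapse the adjacent $t_0 \cdot s_2$ and dispatch the residual $s_n \cdots s_3 \cdot s_3 \cdots s_{i'}$ via Lemma \ref{LmScholie1}, together with Lemma \ref{LemmaTLTKInVect} for any $t$-$t$ adjacencies that appear.

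The main obstacle is the remaining case $v = s_n \cdots s_3 t_m s_2 \cdots s_{i''}$ with $1 \leq m \leq e-1$ and $2 \leq i'' \leq n$, where two $s_2$-prefixed blocks are separated by $t_m$. My plan here is to parallel the manipulations in the proof of Lemma \ref{Lemma8}: first commute the second $s_2 = t_0$ leftward past $s_{i''}, s_{i''-1}, \ldots, s_4$, then apply the braid relation $t_0 s_3 t_0 = s_3 t_0 s_3$ when $t_0$ meets $s_3$, and finally iterate braid and commutation steps to reshape the two intertwined blocks into a product of an $S_{n-1}^{\ast}$-word and a $\Lambda_n$-element, splitting on $i'' < i'$ and $i'' \geq i'$ exactly as Lemma \ref{Lemma8} does. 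Whenever two $t$-generators become adjacent during this process, Lemma \ref{LemmaTLTKInVect} reduces them to elements of $\Lambda_2$. No fundamentally new technique is needed beyond those already developed in Lemmas \ref{Lemma2}--\ref{Lemma9}; the novelty is only in the bookkeeping, which is the lengthiest of the sequence.
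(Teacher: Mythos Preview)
Your approach is correct and essentially the same as the paper's. Both reduce to Lemma~\ref{Lemma9} and then handle the right multiplication by $s_2 \cdots s_{i'}$ via a case analysis on the resulting $\Lambda_n$-factor, invoking Lemma~\ref{LemmaTLTKInVect}, Lemma~\ref{LmScholie1}, and the braid/commutation manipulations of Lemmas~\ref{Lemma4} and~\ref{Lemma8} exactly as you describe.

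One small organizational difference worth noting: rather than treating an arbitrary $v \in \Lambda_n$, the paper tracks the four \emph{specific} shapes that actually arise from the computation inside Lemma~\ref{Lemma9} (namely $s_n\cdots s_3 t_x t_l$, $s_n\cdots s_3 t_m t_0 s_3\cdots s_{i+1}$, $s_n\cdots s_3 t_m$, and $s_n\cdots s_4 s_3^2 s_4\cdots s_{i+1}$). This buys a mild simplification in the hardest case, your $v = s_n\cdots s_3 t_m s_2\cdots s_{i''}$: since the paper's corresponding term always has $i'' = i+1 \leq n$ (inherited from the fixed $a_{n-1}$), the index never reaches $n$, which keeps the two-block braid bookkeeping slightly shorter. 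Your more uniform formulation still goes through, but you will need to check the boundary $i'' = n$ separately.
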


\begin{proof}

According to the computation in the proof of Lemma \ref{Lemma9}, we get the following possible terms. They appear in the proof of Lemma \ref{Lemma9} in the following order.
\begin{enumerate}

\item $s_n \cdots s_3t_xt_l$ with $0 \leq x,l \leq e-1$,
\item $s_n \cdots s_3 t_mt_0 s_3 \cdots s_{i+1}$ with $1 \leq m \leq e-1$,
\item $s_n \cdots s_3 t_m$ with $0 \leq m \leq e-1$,
\item $s_n \cdots s_4s_3^2 s_4 \cdots s_{i+1}$

\end{enumerate}

We show that the product on the right by $s_2 \cdots s_{i'}$ of each of the previous terms belongs to Span$(S_{n-1}^{*}\Lambda_n)$.\\

\emph{Case 1}. Consider the first term $s_n \cdots s_3t_xt_l (s_2 \cdots s_{i'})$ with $0 \leq x,l \leq e-1$.
We replace $t_xt_l$ by its decomposition given in Lemma \ref{LemmaTLTKInVect}, we get these terms
\begin{itemize}

\item $s_n \cdots s_3 t_mt_0 s_3 \cdots s_{i'}$ with $1 \leq m \leq e-1$,
\item $s_n \cdots s_3 t_m s_3 \cdots s_{i'}$ with $0 \leq m \leq e-1$,
\item $s_n \cdots s_4 s_3^2 s_4 \cdots s_{i'}$.

\end{itemize}
The first term belongs to Span$(S_{n-1}^{*} \Lambda_n)$. The third one is done in Lemma \ref{LmScholie1}. For the second one, we have\\
$\begin{array}{ll}
s_n \cdots \underline{s_3t_ms_3} \cdots s_{i'} & \stackrel{(2)}{=}\\
s_n \cdots \underline{t_m}s_3t_m\underline{s_4} \cdots s_{i'} & \stackrel{(1)}{=}\\
t_ms_n \cdots \underline{s_4s_3s_4}t_ms_5 \cdots s_{i'} & \stackrel{(2)}{=}\\
t_ms_n \cdots \underline{s_3}s_4s_3 t_ms_5 \cdots s_{i'} & \stackrel{(1)}{=}\\
\end{array}$\\
$\begin{array}{l}
t_ms_3s_n \cdots s_4s_3 t_m\underline{s_5} \cdots \underline{s_{i'-1}}s_{i'}.
\end{array}$ We apply the same operations to $\underline{s_5}$, $\cdots$, $\underline{s_{i'-1}}$ and get
$t_ms_3 \cdots s_{i'-2}s_n \cdots s_3 t_m \underline{s_{i'}}$. We shift $\underline{s_{i'}}$ to the left and finally get\\
$t_ms_3 \cdots s_{i'-1} \underline{s_n \cdots s_3 t_m}$ which belongs to Span$(S_{n-1}^{*} \Lambda_n)$.\\

We now consider \emph{Case 3} because we use the computation we made in Case 1. In this case, the term is of the form $s_n \cdots s_3t_m(s_2 \cdots s_{i'})$ with $0 \leq m \leq e-1$. If $m \neq 0$, then it belongs to Span$(S_{n-1}^{*} \Lambda_n)$. If $m = 0$, we get two terms $s_n \cdots s_3 t_0 s_3 \cdots s_{i'}$ and $s_n \cdots s_4 s_3^2 s_4 \cdots s_{i'}$. The first term is done in Case 1. The second term is done in Lemma \ref{LmScholie1}.\\

Consider \emph{Case 4}. We replace $s_n \cdots s_4 s_3^2s_4 \cdots s_{i+1}$ by its decomposition given by the computation in the proof of Lemma \ref{LmScholie1}. We multiply each term of the decomposition by $s_2 \cdots s_{i'}$ on the right and we prove that it belongs to Span$(S_{n-1}^{*} \Lambda_n)$ in the same way as the proof of Lemma \ref{Lemma4}.\\

Finally, it remains to show that the term corresponding to \emph{Case 2} belongs to Span$(S_{n-1}^{*} \Lambda_n)$. It is of the form \begin{center}$s_n \cdots s_3 t_mt_0 s_3 \cdots s_{i+1} (s_2 \cdots s_{i'})$ with $1 \leq m \leq e-1$.\end{center}

\emph{Suppose $i' \leq i$}. We have\\
$\begin{array}{ll}
s_n \cdots s_3t_ms_2s_3 \cdots s_{i+1}\underline{s_2} \cdots s_{i'} & \stackrel{(1)}{=}\\
s_n \cdots s_3t_m\underline{s_2s_3s_2}s_4 \cdots s_{i+1}s_3 \cdots s_{i'} & \stackrel{(2)}{=}
\end{array}$\\

$\begin{array}{ll}
s_n \cdots \underline{s_3t_m s_3}s_2s_3 s_4 \cdots s_{i+1}s_3 \cdots s_{i'} & \stackrel{(2)}{=}\\
s_n \cdots \underline{t_m}s_3t_m s_2s_3 s_4 \cdots s_{i+1} s_3 \cdots s_{i'} & \stackrel{(1)}{=}\\
t_m s_n \cdots s_3t_m s_2s_3 s_4 \cdots s_{i+1}\underline{s_3} \cdots s_{i'} & \stackrel{(1)}{=}\\
t_m s_n \cdots s_3t_m s_2\underline{s_3 s_4s_3}s_5 \cdots s_{i+1}s_4 \cdots s_{i'} & \stackrel{(2)}{=}\\
t_m s_n \cdots s_3t_m s_2 \underline{s_4}s_3 s_4s_5 \cdots s_{i+1}s_4 \cdots s_{i'} & \stackrel{(1)}{=}\\
t_m s_n \cdots \underline{s_4s_3s_4}t_m s_2s_3 s_4s_5 \cdots s_{i+1}s_4 \cdots s_{i'} & \stackrel{(2)}{=}\\
t_m s_n \cdots \underline{s_3}s_4s_3t_m s_2s_3 s_4s_5 \cdots s_{i+1}s_4 \cdots s_{i'} & \stackrel{(1)}{=}\\
t_m s_3 s_n \cdots s_4s_3t_m s_2s_3 s_4s_5 \cdots s_{i+1}\underline{s_4} \cdots \underline{s_{i'-1}} s_{i'}.
\end{array}$\\
We apply the same operations to $\underline{s_4}$, $\cdots$, $\underline{s_{i'-1}}$ to get\\
$t_ms_3 \cdots s_{i'-1}s_n \cdots s_3 t_m s_2 s_3 \cdots s_i s_{i+1} \underline{s_{i'}}$. We shift $\underline{s_{i'}}$ to the left and finally get\\
$t_ms_3 \cdots s_{i'}s_n \cdots s_3 t_m s_2 s_3 \cdots s_i s_{i+1}$. Since $i' \leq i \leq n-1$, this term satisfies the property of the lemma.\\

\emph{Suppose $i' > i$}. As previously, we have\\
$\begin{array}{ll}
s_n \cdots s_3t_ms_2s_3 \cdots s_{i+1}\underline{s_2} \cdots s_{i'} & \stackrel{(1)}{=}\\
t_m s_n \cdots s_3t_ms_2s_3 \cdots s_{i+1}\underline{s_3} \cdots s_{i'} & \stackrel{(1)}{=}\\
t_ms_3s_n \cdots s_3t_ms_2s_3 \cdots s_{i+1}s_4 \cdots s_{i'}. & \\
\end{array}$\\ Now we write $s_is_{i+1}$ in $s_4 \cdots s_{i'}$ and get $t_ms_3s_n \cdots s_3t_ms_2s_3 \cdots s_{i+1}\underline{s_4} \cdots \underline{s_i}s_{i+1} \cdots s_{i'}$.\\
We apply the same operations to $\underline{s_4}$, $\cdots$, $\underline{s_i}$ to get\\
$t_ms_3 \cdots s_i s_n \cdots s_3t_ms_2s_3 \cdots s_{i}s_{i+1}^2s_{i+2} \cdots s_{i'}$. Applying a quadratic relation, we finally get
$a t_ms_3 \cdots s_i s_n \cdots s_3t_m s_2 s_3 \cdots s_{i'}
+ t_m s_3 \cdots s_i s_n \cdots s_3t_ms_2s_3 \cdots s_{i}s_{i+2} \cdots s_{i'}$.\\
The first term satisfies the property of the lemma.\\
For the second term, we write $s_{i+2}s_{i+1}$ in $\underline{s_n \cdots s_3}$ of \\
$t_m s_3 \cdots s_i \underline{s_n \cdots s_3}t_ms_2s_3 \cdots s_{i} s_{i+2} \cdots s_{i'}$ and get\\
$\begin{array}{ll}
t_m s_3 \cdots s_i s_n \cdots s_{i+2}s_{i+1} \cdots s_3t_ms_2s_3 \cdots s_{i} \underline{s_{i+2}} \cdots s_{i'} & \stackrel{(1)}{=}\\
t_m s_3 \cdots s_i s_n \cdots \underline{s_{i+2}s_{i+1}s_{i+2}} \cdots s_3t_ms_2s_3 \cdots s_{i} s_{i+3} \cdots s_{i'} & \stackrel{(2)}{=}\\
t_m s_3 \cdots s_i s_n \cdots \underline{s_{i+1}}s_{i+2}s_{i+1} \cdots s_3t_ms_2s_3 \cdots s_{i} s_{i+3} \cdots s_{i'} & \stackrel{(1)}{=}\\
\end{array}$\\
$\begin{array}{l}
t_m s_3 \cdots s_{i+1} s_n \cdots s_3t_ms_2s_3 \cdots s_{i} \underline{s_{i+3}} \cdots \underline{s_{i'}}.
\end{array}$\\
We apply the same operations to $\underline{s_{i+3}}$, $\cdots$, $\underline{s_{i'}}$ and finally get\\
\begin{small}$t_m s_3 \cdots s_{i'-1} s_n \cdots s_3t_ms_2s_3 \cdots s_{i}$.\end{small} Since \begin{small}$i'-1 \leq n-1$\end{small}, this term belongs to \begin{small}Span$(S_{n-1}^{*} \Lambda_n)$.\end{small}

\end{proof}

\bigskip

As a consequence of Theorem \ref{TheoremNewBasis}, by the second item of Proposition 2.3 of \cite{MarinG20G21}, we get another proof of the BMR freeness conjecture (see Theorem \ref{PropositionBMRFreenessTheorem}) for the complex reflection groups $G(e,e,n)$.

\begin{remark}

Our basis never coincides with the Ariki basis \cite{ArikiHecke} for the Hecke algebra associated with $G(e,e,n)$. For example, consider the element $t_1t_0.t_0$ which belongs to Ariki's basis. In our basis, it is equal to the linear combination $a t_1t_0 + t_1$, where $t_1t_0$ and $t_1$ are two distinct elements of our basis.

\end{remark}

The general hope would be to construct natural bases for $H(de,e,n)$ that can be defined from geodesic normal forms in the associated complex reflection groups $G(de,e,n)$, which was established in Theorem \ref{TheoremNewBasis} for the case of $H(e,e,n)$. This arises the question whether the geodesic normal forms established in Section \ref{SectionReducedWordsG(de,e,n)} by using the presentation of Corran-Lee-Lee of $G(de,e,n)$ for $d > 1$ also provide natural bases for the corresponding Hecke algebras. Unfortunately, some intricate arguments in our proof for the case of $H(e,e,n)$ does not work in the general case of $H(de,e,n)$. Actually, on the one hand, the Hecke algebra is defined, in the general case, as a quotient of the complex braid group algebra defined by the presentation of Corran-Lee-Lee, where the generators $t_i$'s are defined over $\mathbb{Z}$, but on the other hand our geodesic normal forms are attached to the presentation of Corran-Lee-Lee of the complex reflection group $G(de,e,n)$, where the generators $\mathbf{t}_i$'s are defined this time over $\mathbb{Z}/de\mathbb{Z}$. This phenomenon does not occur in the case of the presentations of Corran-Picantin of $G(e,e,n)$ and $B(e,e,n)$ used to construct a basis for $H(e,e,n)$. Nonetheless, for $e=1$, the Hecke algebra $H(d,1,n)$ is defined by using the classical presentation of the complex braid group $B(d,1,n)$. Using the geodesic normal forms constructed in Subsection \ref{SubGeodesicG(d,1,n)}, we are also able to provide a natural basis for $H(d,1,n)$. This will be established in the next section.

\section{The case of \texorpdfstring{$H(d,1,n)$}{TEXT}}\label{SectionTheCaseOfG(d,1,n)}

Let $d > 1$ and $n \geq 2$. Let $R_0 = \mathbb{Z}[a,b_1,b_2, \cdots, b_{d-1}]$. Recall that the Hecke algebra $H(d,1,n)$ is defined as the unitary associative $R_0$-algebra generated by the elements $z, s_2, s_3, \cdots, s_n$ with the following relations:
\begin{enumerate}

\item $zs_2zs_2 = s_2zs_2z$,
\item $zs_j = s_jz$ for $2 \leq j \leq n$,
\item $s_is_{i+1}s_i = s_{i+1}s_is_{i+1}$ for $2 \leq i \leq n-1$,
\item $s_is_j = s_js_i$ for $|i-j| > 1$,
\item $z^d -b_1 z^{d-1} - b_2 z^{d-2} - \cdots - b_{d-1} z -1 = 0$ and $s_j^2 - as_j - 1 = 0$ for $2 \leq j \leq n$.

\end{enumerate}

Using the geodesic normal forms introduced in subsection \ref{SubGeodesicG(d,1,n)} for all the elements of $G(d,1,n)$, we construct a basis for $H(d,1,n)$ that is different from the one defined by Ariki-Koike in \cite{ArikiKoikeHecke}.\\

Let us introduce the following subsets of $H(d,1,n)$.
\begin{center}
\begin{tabular}{lll}
$\Lambda_1$ & $=$ & $\{z^k$ for $0 \leq k \leq d-1\}$,\\
\end{tabular}
\end{center}
and for $2 \leq i \leq n$,
\begin{center}
\begin{tabular}{llll}
$\Lambda_i$ & $=$ & $\{1$,\\
 & & $\ \ s_i \cdots s_{i'}$ & for $2 \leq i' \leq i$,\\ 
 & & $\ \ s_i \cdots s_{2}z^k$ & for $1 \leq k \leq d-1$,\\
 & & $\ \ s_i \cdots s_{2}z^ks_2 \cdots s_{i'}$ & for $1 \leq k \leq d-1$ and $2 \leq i' \leq i\}$.\\
\end{tabular}
\end{center}

\noindent Define $\Lambda = \Lambda_1\Lambda_2 \cdots \Lambda_n$ to be the set of the products $a_1a_2 \cdots a_n$, where $a_1 \in \Lambda_1, \cdots,\\ a_n \in \Lambda_n$. Remark that this set corresponds to all the reduced words $R\!E(w)$ of the form $R\!E_1(w)R\!E_2(w) R\!E_3(w) \cdots R\!E_n(w)$ introduced in subsection \ref{SubGeodesicG(d,1,n)} (see Definition \ref{DefinitionREG(d,1,n)}). In this section, we establish the following theorem.

\begin{theorem}\label{TheoremNewBasisH(d,1,n)}

The set $\Lambda$ provides an $R_0$-basis of the Hecke algebra $H(d,1,n)$.

\end{theorem}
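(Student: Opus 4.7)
The plan is to adapt the strategy of Theorem \ref{TheoremNewBasis}. First I would count: a direct computation gives $|\Lambda_1|=d$ and, for $2 \leq i \leq n$,
$$|\Lambda_i| = 1 + (i-1) + (d-1) + (d-1)(i-1) = id,$$
so $|\Lambda| = \prod_{i=1}^{n} |\Lambda_i| = n!\, d^n = |G(d,1,n)|$. By Proposition 2.3(i) of \cite{MarinG20G21} applied to $H(d,1,n)$, it then suffices to show that $\Lambda$ is an $R_0$-spanning set of $H(d,1,n)$.

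I would prove the spanning property by induction on $n \geq 2$. For the base case $n=2$, one checks that $\Lambda_1 \Lambda_2$ is stable under left multiplication by $z$ and by $s_2$: left multiplication by $z$ is controlled by the polynomial relation $z^d = b_1 z^{d-1} + \cdots + b_{d-1} z + 1$, which keeps the leftmost $z$-power inside $\Lambda_1$, combined with the mixed braid relation $zs_2zs_2 = s_2zs_2z$ to handle products of the shape $z \cdot s_2 z^k s_2$; left multiplication by $s_2$ combines the quadratic relation $s_2^2 = as_2 + 1$ with the iterated identity $s_2 z s_2 z^k = z^k s_2 z s_2$ (deduced from the mixed braid relation) in order to reduce $s_2 \cdot z^j s_2 z^k$ to an element of $\mathrm{Span}(\Lambda_1 \Lambda_2)$.

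For the induction step, assume $\Lambda_1 \cdots \Lambda_{n-1}$ is an $R_0$-basis of $H(d,1,n-1)$. By the induction hypothesis, $\Lambda$ is automatically stable under left multiplication by $z, s_2, \ldots, s_{n-1}$, and it only remains to handle $s_n$. Since $s_n$ commutes with $z, s_2, \ldots, s_{n-2}$, I would push $s_n$ across the first $n-2$ factors and reduce to proving that $s_n a_{n-1} a_n \in \mathrm{Span}(S^{*}_{n-1} \Lambda_n)$, where $S^{*}_{n-1}$ denotes the set of words in $\{z, s_2, \ldots, s_{n-1}\}$. Since $a_{n-1}$ comes in four shapes (namely $1$, $s_{n-1}\cdots s_{i'}$, $s_{n-1}\cdots s_2 z^k$, $s_{n-1}\cdots s_2 z^k s_2 \cdots s_{i'}$) and similarly for $a_n$, this breaks into roughly sixteen sub-cases organized exactly as in Lemmas \ref{Lemma2}--\ref{Lemma10}. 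The purely $s$-type cases (no $z^k$ appearing in either factor) are handled using only the braid relation $s_n s_{n-1} s_n = s_{n-1} s_n s_{n-1}$ and the commutations $s_n s_j = s_j s_n$ for $j \leq n-2$, exactly as in the analog of Lemma \ref{Lemma2}.

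The main obstacle will be the cases where both $a_{n-1}$ and $a_n$ carry a $z^k$ factor, for instance $a_{n-1} = s_{n-1} \cdots s_2 z^k s_2 \cdots s_i$ and $a_n = s_n \cdots s_2 z^l s_2 \cdots s_{i'}$. Here one iterates the mixed braid relation $zs_2zs_2 = s_2zs_2z$ in order to shift $z$-factors past $s_2$'s, uses the quadratic relation to absorb squared $s$-factors appearing en route, and applies $z^d = b_1 z^{d-1} + \cdots + 1$ whenever a power of $z$ exceeds $d-1$. These moves play the role that Lemmas \ref{LemmaTLTKInVect} and \ref{LemmaTkT0} play in the $H(e,e,n)$ proof, with the welcome simplification that there is now a single ``$t$'' generator (namely $s_2 = t_0$), which removes any case analysis on pairs of indices $(k,l)$. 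Once all sub-cases are verified---a reindexed and mildly simpler variant of the calculations in Lemmas \ref{Lemma2}--\ref{Lemma10}---one obtains $s_n \Lambda \subseteq \mathrm{Span}(\Lambda)$, which completes the induction and proves Theorem \ref{TheoremNewBasisH(d,1,n)} via the Marin criterion.
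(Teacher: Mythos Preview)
Your proposal is correct and follows essentially the same approach as the paper: the same counting, the same appeal to Marin's criterion (Proposition~2.3(i) of \cite{MarinG20G21}), induction on $n$ with the base case $n=2$ handled by stability of $\Lambda_1\Lambda_2$ under left multiplication by $z$ and $s_2$, and the induction step reduced to verifying $s_n a_{n-1} a_n \in \mathrm{Span}(S_{n-1}^{*}\Lambda_n)$ through a case analysis parallel to Lemmas~\ref{Lemma2}--\ref{Lemma10}.

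One caveat on your ``welcome simplification'' remark: the base case is not really easier than in the $H(e,e,n)$ proof. In place of the quadratic generators $t_i$ and the identity $t_it_{i-1}=t_jt_{j-1}$ (which make Lemma~\ref{LemmaTLTKInVect} short), here $z$ satisfies a degree-$d$ relation, and the paper develops three auxiliary lemmas expressing $(s_2zs_2)^k$, $(s_2zs_2)^ks_2$, and $s_2z^ks_2$ in terms of one another and of $\Lambda_1\Lambda_2$ in order to handle products like $s_2 z^j s_2 z^{k} s_2$; your sketch of the base case (``the iterated identity $s_2zs_2z^k = z^ks_2zs_2$'') does not by itself cover this last shape, and you should expect to prove analogous preparatory lemmas. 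Also, your comment that the mixed braid relation is needed for $z\cdot s_2 z^k s_2$ is off: left multiplication by $z$ always lands on the $\Lambda_1$ factor first and requires only the degree-$d$ relation.
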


We have $|\Lambda_1| = d$ and $|\Lambda_i| = id$ for $2 \leq i \leq n$. Then $|\Lambda|$ is equal to $d^n n!$ that is the order of $G(d,1,n)$. Hence by Proposition 2.3$(i)$ of \cite{MarinG20G21}, it is sufficient to show that $\Lambda$ is an $R_0$-generating set of $H(d,1,n)$. This is proved by induction on $n$ in much the same way as Theorem \ref{TheoremNewBasis}. We provide the following preliminary lemmas that are useful in the proof of the theorem.

\begin{lemma}\label{Lemma(s2ts2)^k}

For $1 \leq k \leq d-1$, the element $(s_2zs_2)^k$ belongs to $\sum\limits_{\substack{\lambda_1 \in \Lambda_1,\\ \lambda_2 \in \Lambda'_2}}R_0 \lambda_1 \lambda_2$, where $\Lambda'_2 = \{1, s_2, s_2z, s_2z^2, \cdots, s_2z^{k-1}, s_2zs_2, s_2z^2s_2, \cdots, s_2z^ks_2 \}$.

\end{lemma}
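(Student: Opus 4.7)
The plan is to prove the lemma by induction on $k$, writing $C := s_2 z s_2$ for brevity, and writing $\Lambda'_2(k)$ for the set $\Lambda'_2$ to make its dependence on $k$ explicit. The crucial algebraic fact driving everything is that the braid relation $zs_2zs_2 = s_2zs_2z$ is equivalent to $zC = Cz$: the generator $z$ commutes with the full product $C$. Together with the quadratic relation $s_2^2 = a s_2 + 1$, this will be the only algebraic input needed. The base case $k = 1$ is immediate since $C \in \Lambda'_2(1)$ with $\lambda_1 = 1$.

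For the inductive step I would assume $C^{k-1}$ lies in $\sum R_0 \lambda_1 \lambda_2$ over $\Lambda_1 \times \Lambda'_2(k-1)$, write $C^k = C^{k-1} \cdot C$, and analyze $\lambda_2 \cdot C$ for each of the four types of element of $\Lambda'_2(k-1)$. The cases $\lambda_2 \in \{1, s_2\}$ are immediate from the quadratic relation. For $\lambda_2 = s_2 z^j$ with $1 \leq j \leq k-2$, the commutation $Cz^j = z^j C$ lets me push $z^j$ past $C$, giving
$$s_2 z^j \cdot C = s_2 \cdot C \cdot z^j = (1 + a s_2) z s_2 \, z^j = z s_2 z^j + a z^j \cdot s_2 z s_2,$$
and since $j \leq k-1$, both $s_2 z^j$ and $s_2 z s_2$ lie in $\Lambda'_2(k)$, so the result is in the required span.

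The main obstacle is the case $\lambda_2 = s_2 z^j s_2$ with $1 \leq j \leq k-1$. Applying $s_2^2 = a s_2 + 1$ in the middle yields
$$s_2 z^j s_2 \cdot C = s_2 z^{j+1} s_2 + a \cdot s_2 z^j s_2 z s_2;$$
the first summand is in $\Lambda'_2(k)$ since $j+1 \leq k$. For the second, I would set $f(j) := s_2 z^j s_2 z s_2$ and establish the telescoping identity $f(j) = f(j-1) \cdot z$, which follows directly from the braid relation via
$$f(j-1) \cdot z = s_2 z^{j-1}(s_2 z s_2 z) = s_2 z^{j-1}(z s_2 z s_2) = f(j).$$
Iterating gives $f(j) = f(0) \cdot z^j$ with $f(0) = s_2^2 \, z s_2 = z s_2 + a \cdot s_2 z s_2$, and using $Cz^j = z^j C$ once more yields $f(j) = z s_2 z^j + a z^j \cdot s_2 z s_2$, whose two summands both lie in the span over $\Lambda'_2(k)$ thanks to $j \leq k-1$. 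The hypothesis $k \leq d - 1$ guarantees that no power of $z$ appearing during these manipulations exceeds $d - 1$, so the polynomial relation for $z^d$ never needs to be invoked. The real insight is the telescoping identity $f(j) = f(j-1) z$: neither the quadratic nor the braid relation alone simplifies $s_2 z^j s_2 z s_2$, and peeling off exactly one $z$ at a time is what makes the recursion close up.
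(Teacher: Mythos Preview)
Your proof is correct and follows essentially the same route as the paper's: both argue by induction on $k$, multiply $(s_2zs_2)^{k-1}$ by $s_2zs_2$ on the right, and treat the resulting products $\lambda_2\cdot C$ case by case using only the quadratic relation and the commutation $zC=Cz$ (which the paper phrases as ``shift $z^{c'}$ to the right by applying braid relations''). Your telescoping identity $f(j)=f(j-1)z$ is exactly the paper's shifting step, and your case $\lambda_2=s_2z^js_2$ is the paper's Case~3, which it likewise reduces to Case~2 after one application of $s_2^2=as_2+1$. One small caveat: your closing remark that the polynomial relation for $z^d$ is never needed is not justified by the inductive hypothesis as you stated it (with $\lambda_1\in\Lambda_1$ only), since multiplying $z^c\cdot z^m$ can push the exponent above $d-1$; this is harmless for the lemma because the relation for $z^d$ immediately reduces such powers back into $\Lambda_1$, but if you want the stronger claim you would need to carry the sharper invariant $c\leq k-1$ through the induction.
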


\begin{proof}

The property is clear for $k=1$. Let $k=2$. We have
$(s_2zs_2)^2 = s_2zs_2^2zs_2$. We apply a quadratic relation and get
$a s_2zs_2zs_2 + s_2z^2s_2$. Applying a braid relation, one gets
$a zs_2zs_2^2 + s_2z^2s_2$. Using a quadratic relation, this is equal to
$a^2 zs_2zs_2 + a zs_2z + s_2z^2s_2$, where each term is of the form $\lambda_1\lambda_2$ with $\lambda_1 \in \Lambda_1$ and $\lambda_2 \in \{1,s_2,s_2z,s_2zs_2,s_2z^2s_2\}$.

Let $k \geq 3$. Suppose the property is satisfied for $(s_2zs_2)^3$, $\cdots$, and $(s_2zs_2)^{k-1}$. We have $(s_2zs_2)^k = (s_2zs_2)^{k-1}(s_2zs_2)$. By the induction hypothesis, the terms that appear in the decomposition of $(s_2zs_2)^{k-1}$ are of the following forms.
\begin{itemize}

\item $z^c$ with $0 \leq c \leq d-1$,
\item $z^cs_2z^{c'}$ with $0 \leq c \leq d-1$ and $0 \leq c' \leq k-2$,
\item $z^cs_2z^{c'}s_2$ with $0 \leq c \leq d-1$ and $1 \leq c' \leq k-1$.

\end{itemize}
Multiplying these terms by $s_2zs_2$ on the right, we get the following $3$ cases.

\emph{Case 1}: A term of the form $z^c s_2zs_2$ with $0 \leq c \leq d-1$. It is of the form $\lambda_1\lambda_2$ with $\lambda_1 \in \Lambda_1$ and $\lambda_2 \in \Lambda'_2$.

\emph{Case 2}: A term of the form $z^cs_2z^{c'}s_2zs_2$ with $0 \leq c \leq d-1$ and $0 \leq c' \leq k-2$. We shift $z^{c'}$ to the right by applying braid relations and get
$z^cs_2^2zs_2z^{c'}$. Applying a quadratic relation, this is equal to
$a z^cs_2zs_2\underline{z^{c'}} +  z^{c+1}s_2z^{c'}$. Now we shift $\underline{z^{c'}}$ to the left by applying braid relations and get
$a z^{c+c'}s_2zs_2 +  z^{c+1}s_2z^{c'}$. Each term is of the form $\lambda_1\lambda_2$ with $\lambda_1 \in \Lambda_1$ and $\lambda_2 \in \Lambda'_2$.

\emph{Case 3}: A term of the form $z^cs_2z^{c'}s_2^2zs_2$ with $0 \leq c \leq d-1$ and $1 \leq c' \leq k-1$. By applying a quadratic relation, we have $z^cs_2z^{c'}s_2^2zs_2 = a z^cs_2z^{c'}s_2zs_2 + z^cs_2z^{c'+1}s_2$. The first term is the same as in the previous case. Then both terms are of the form $\lambda_1\lambda_2$ with $\lambda_1 \in \Lambda_1$ and $\lambda_2 \in \Lambda'_2$.

\end{proof}

\begin{lemma}\label{Lemma(s2ts2)^ks2}

For $1 \leq k \leq d-1$, the element $(s_2zs_2)^ks_2$ belongs to $R_0(s_2zs_2)^k + R_0z(s_2zs_2)^{k-1} + \cdots + R_0z^{k-1}(s_2zs_2) + R_0s_2z^k$.

\end{lemma}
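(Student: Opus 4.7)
The plan is to prove the lemma by induction on $k$, with the key input being the following observation: relation $1$ of the presentation of $H(d,1,n)$ states that $zs_2zs_2 = s_2zs_2z$, i.e.\ the element $s_2zs_2$ commutes with $z$. Consequently, for every $j \geq 0$ we have the identity
$$(s_2zs_2)\, z^j \;=\; z^j\, (s_2zs_2),$$
which will let us freely shuffle $z$-powers past $s_2zs_2$ during the induction.

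For the base case $k=1$, a direct computation using the quadratic relation $s_2^2 = as_2+1$ yields
$$(s_2zs_2)\,s_2 \;=\; s_2z\,s_2^2 \;=\; a\,(s_2zs_2) \,+\, s_2z,$$
which lies in $R_0(s_2zs_2) + R_0\, s_2 z$, as required.

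For the inductive step, assume the statement for $k-1$, so that
$$(s_2zs_2)^{k-1} s_2 \;=\; \sum_{j=0}^{k-2} \alpha_j\, z^j (s_2zs_2)^{k-1-j} \,+\, \beta\, s_2 z^{k-1}$$
for some $\alpha_j,\beta \in R_0$. Multiplying on the left by $s_2zs_2$ and applying the commutation $(s_2zs_2)z^j = z^j(s_2zs_2)$ to each summand of the sum gives
$$(s_2zs_2)^k s_2 \;=\; \sum_{j=0}^{k-2} \alpha_j\, z^j (s_2zs_2)^{k-j} \,+\, \beta\,(s_2zs_2)\, s_2\, z^{k-1}.$$
For the last term, the base case computation $(s_2zs_2)s_2 = a(s_2zs_2) + s_2z$ together with the commutation relation gives
$$\beta\,(s_2zs_2)\,s_2\,z^{k-1} \;=\; a\beta\,(s_2zs_2)\,z^{k-1} + \beta\, s_2 z^k \;=\; a\beta\, z^{k-1}(s_2zs_2) + \beta\, s_2 z^k.$$
Substituting back displays $(s_2zs_2)^k s_2$ as an $R_0$-linear combination of $(s_2zs_2)^k, z(s_2zs_2)^{k-1}, \dots, z^{k-1}(s_2zs_2), s_2 z^k$, which is exactly the claimed decomposition.

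No step here is an obstacle in the usual sense; the only real content is the recognition that $s_2zs_2$ is central with respect to $\langle z\rangle$ in $H(d,1,n)$ (a direct reading of relation $1$). Once that is in place, the induction is routine and the quadratic relation $s_2^2 = as_2+1$ is used only at the very end to convert the trailing $s_2^2$ into $as_2 + 1$.
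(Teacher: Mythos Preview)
Your proof is correct and follows essentially the same route as the paper's own argument: induction on $k$, the base case via $s_2^2 = as_2+1$, and the inductive step handled by left-multiplying by $s_2zs_2$ and using the commutation $(s_2zs_2)z = z(s_2zs_2)$ together with the base-case identity on the trailing term. The only difference is cosmetic: you state the commutation of $s_2zs_2$ with $z$ explicitly at the outset, whereas the paper uses it silently when rewriting $(s_2zs_2)z^j(s_2zs_2)^{k-1-j}$ as $z^j(s_2zs_2)^{k-j}$.
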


\begin{proof}

For $k = 1$, we have $(s_2zs_2)s_2 = s_2zs_2^2 = a s_2zs_2 + s_2z$. Then the property is satisfied for $k = 1$. Let $k \geq 2$. Suppose that the property is satisfied for $(s_2zs_2)^{k-1}$. We have $(s_2zs_2)^k s_2 = (s_2zs_2)(s_2zs_2)^{k-1} s_2$. By the induction hypothesis, it belongs to $R_0(s_2zs_2)(s_2zs_2)^{k-1} + R_0(s_2zs_2)z(s_2zs_2)^{k-2} + \cdots + R_0(s_2zs_2)z^{k-2}(s_2zs_2) + R_0(s_2zs_2)s_2z^{k-1}$. Then it belongs to $R_0(s_2zs_2)^{k} + R_0z(s_2zs_2)^{k-1} + \cdots + R_0z^{k-2}(s_2zs_2)^2\\ + R_0z^{k-1}(s_2zs_2) + R_0s_2z^k$. It follows that for all $1 \leq k \leq d-1$, the element $(s_2zs_2)^ks_2$ belongs to $R_0(s_2zs_2)^k + R_0z(s_2zs_2)^{k-1} + \cdots + R_0z^{k-1}(s_2zs_2) + R_0s_2z^k$.

\end{proof}

\begin{lemma}\label{Lemmas2t^ks2In(s2ts2)^k}

For $1 \leq k \leq d-1$, the element $s_2z^ks_2$ belongs to $\sum\limits_{\substack{\lambda_1 \in \Lambda_1,\\ \lambda_2 \in \Lambda''_2}}R_0 \lambda_1 \lambda_2$, where $\Lambda''_2 = \{1,s_2,s_2z,s_2z^2, \cdots, s_2z^{k-1}, s_2zs_2, (s_2zs_2)^2, \cdots, (s_2zs_2)^{k} \}$.

\end{lemma}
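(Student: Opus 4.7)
The plan is to prove the lemma by induction on $k$. The base case $k=1$ is immediate since $s_2 z s_2 \in \Lambda''_2$ already. For the inductive step, I will assume the result holds for all $j < k$ (with the correspondingly smaller sets $\Lambda''_2(j) = \{1, s_2, s_2z, \dots, s_2z^{j-1}, s_2zs_2, \dots, (s_2zs_2)^j\} \subseteq \Lambda''_2(k) = \Lambda''_2$).

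The crucial observation I will exploit is that $s_2zs_2$ commutes with $z$: this follows directly from relation 1 of the presentation, $zs_2zs_2 = s_2zs_2z$, rewritten as $(s_2zs_2)z = z(s_2zs_2)$. Then I compute $(s_2zs_2)(s_2z^{k-1}s_2)$ in two ways. On one hand, applying the quadratic relation $s_2^2 = as_2 + 1$ in the middle gives
\[
(s_2zs_2)(s_2z^{k-1}s_2) \;=\; s_2z(as_2+1)z^{k-1}s_2 \;=\; a\,s_2zs_2z^{k-1}s_2 + s_2z^ks_2,
\]
so that
\[
s_2z^ks_2 \;=\; (s_2zs_2)(s_2z^{k-1}s_2) - a\,s_2zs_2z^{k-1}s_2.
\]
Using commutativity of $s_2zs_2$ with $z$, I can simplify
\[
s_2zs_2z^{k-1}s_2 \;=\; z^{k-1}(s_2zs_2)s_2 \;=\; z^{k-1}s_2z(as_2+1) \;=\; a\,z^{k-1}s_2zs_2 + z^{k-1}s_2z,
\]
whose two terms are already of the form $\lambda_1\lambda_2$ with $\lambda_1 \in \Lambda_1$ and $\lambda_2 \in \Lambda''_2$.

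It remains to show that $(s_2zs_2)(s_2z^{k-1}s_2)$ lies in the desired span. By the induction hypothesis, $s_2z^{k-1}s_2 = \sum_\alpha r_\alpha z^{c_\alpha}\mu_\alpha$ with $\mu_\alpha \in \Lambda''_2(k-1)$. Commuting $s_2zs_2$ past $z^{c_\alpha}$ reduces matters to checking that $(s_2zs_2)\mu \in \sum R_0\lambda_1\lambda_2$ for each $\mu \in \Lambda''_2(k-1)$. I will verify this case by case: (i) $\mu=1$ gives $s_2zs_2 \in \Lambda''_2$; (ii) $\mu=s_2$ gives $(s_2zs_2)s_2 = as_2zs_2 + s_2z$; (iii) $\mu = s_2z^j$ with $1 \le j \le k-2$ gives $a\,s_2zs_2z^j + s_2z^{j+1} = a\,z^j(s_2zs_2) + s_2z^{j+1}$, again using commutativity, and $j+1 \le k-1$ keeps $s_2z^{j+1}$ inside $\Lambda''_2$; (iv) $\mu = (s_2zs_2)^j$ with $1 \le j \le k-1$ gives $(s_2zs_2)^{j+1}$, which is in $\Lambda''_2$ since $j+1 \le k$.

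The main technical point to watch for, and what I expect to be the only real nuisance, is bookkeeping with the exponents of $z$: after collecting, one obtains terms of the shape $z^{c_\alpha+c_\beta}\lambda_2$, where the exponent might exceed $d-1$. To land in $\Lambda_1 = \{1, z, \dots, z^{d-1}\}$, I will invoke the polynomial relation $z^d = b_1 z^{d-1} + \cdots + b_{d-1}z + 1$ iteratively to rewrite any such $z^m$ as an $R_0$-linear combination of $z^0, z^1, \dots, z^{d-1}$. Combining all of this, every term arising in the expansion of $s_2z^ks_2$ is an $R_0$-multiple of some $\lambda_1\lambda_2$ with $\lambda_1 \in \Lambda_1$ and $\lambda_2 \in \Lambda''_2$, which completes the induction.
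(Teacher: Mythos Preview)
Your argument is correct and follows essentially the same inductive strategy as the paper's proof. The only cosmetic difference is that the paper factors $s_2z^ks_2 = (s_2z^{k-1}s_2)(s_2zs_2) - a\,s_2z^{k-1}(s_2zs_2)$ via $s_2^{-1}=s_2-a$ and multiplies the inductive decomposition of $s_2z^{k-1}s_2$ by $s_2zs_2$ on the \emph{right}, whereas you multiply on the \emph{left} and make the commutation $z(s_2zs_2)=(s_2zs_2)z$ explicit; the paper uses the same commutation implicitly when it ``shifts $z^{c'}$ to the right/left by applying braid relations.''
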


\begin{proof}

The lemma is satisfied for $k=1$. For $k=2$, we have
$s_2z^2s_2 = s_2zs_2^{-1}s_2zs_2$. Using that $s_2^{-1} = s_2 - a$, we get
$s_2zs_2^2zs_2 - a \underline{s_2zs_2z}s_2$. Now we apply a braid relation then a quadratic relation and get
$(s_2zs_2)^2 - a zs_2zs_2^2 = (s_2zs_2)^2 - a^2 zs_2zs_2 - a zs_2z$ which satisfies the property we are proving.

Suppose the property is satisfied for $s_2z^{k-1}s_2$. We have
$s_2z^ks_2 = s_2z^{k-1}s_2^{-1}s_2zs_2 = s_2z^{k-1}s_2s_2zs_2 - a s_2z^{k-1}s_2zs_2$ by replacing $s_2^{-1}$ by $s_2 - a$.
For the second term, we shift $z^{k-1}$ to the right and get
$s_2^2zs_2z^{k-1}$. We apply a quadratic relation to get
$a s_2zs_2\underline{z^{k-1}} + zs_2z^{k-1}$ then we shift $\underline{z^{k-1}}$ to the left and finally get
$a z^{k-1}s_2zs_2 + zs_2z^{k-1}$, where each term is of the form $\lambda_1 \lambda_2$ with $\lambda_1 \in \Lambda_1$ and $\lambda_2 \in \Lambda''_2$.\\
For the first term $s_2z^{k-1}s_2s_2zs_2$, by the induction hypothesis, the terms that appear in the decomposition of $s_2z^{k-1}s_2$ are of the following forms.
\begin{itemize}

\item $z^c$ and $z^cs_2$ with $0 \leq c \leq d-1$,
\item $z^c(s_2zs_2)^{c'}$ with $0 \leq c \leq d-1$ and $1 \leq c' \leq k-1$,
\item $z^cs_2z^{c'}$ with $0 \leq c \leq d-1$ and $1 \leq c' \leq k-2$.

\end{itemize}
Multiplying these terms by $s_2zs_2$ on the right, we get the following $3$ cases.

\emph{Case 1}. We have $z^c s_2zs_2$ and $z^c\underline{s_2s_2}zs_2 = a z^cs_2zs_2 + z^{c+1}s_2$, where each term in both expressions is of the form $\lambda_1 \lambda_2$ with $\lambda_1 \in \Lambda_1$ and $\lambda_2 \in \Lambda''_2$.

\emph{Case 2}. The term $z^c(s_2zs_2)^{c'}s_2zs_2 = z^c(s_2zs_2)^{c'+1}$ is of the form $\lambda_1 \lambda_2$ with $\lambda_1 \in \Lambda_1$ and $\lambda_2 \in \Lambda''_2$ since $1 \leq c' \leq k-1$.

\emph{Case 3}. We have $z^cs_2z^{c'}s_2zs_2 =
z^cs_2^2zs_2z^{c'} = a z^cs_2zs_2z^{c'} + z^{c+1}s_2z^{c'}$. The first term is equal to $z^{c+c'}s_2zs_2$ and the second term is equal to $z^{c+1}s_2z^{c'}$ with $1 \leq c' \leq k-2$. Both are of the form $\lambda_1 \lambda_2$ with $\lambda_1 \in \Lambda_1$ and $\lambda_2 \in \Lambda''_2$.

\end{proof}

The following proposition ensures that the case $n=2$ of Theorem \ref{TheoremNewBasisH(d,1,n)} works properly.

\begin{proposition}\label{PropInductionHypothesisG(d,1,n)}

For all $a_1 \in \Lambda_1$ and $a_2 \in \Lambda_2$, the elements $za_1a_2$ and $s_2 a_1a_2$ belong to \emph{Span}$(\Lambda_1\Lambda_2)$.

\end{proposition}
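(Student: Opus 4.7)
The plan is to verify that $\mathrm{Span}(\Lambda_1 \Lambda_2)$ is stable under left multiplication by the two generators $z$ and $s_2$ of $H(d,1,2)$, which, combined with the containment $1 \in \Lambda_1 \Lambda_2$, will give the statement. The verification proceeds by cases on the forms of $a_1 = z^k \in \Lambda_1$ and $a_2 \in \Lambda_2 = \{1,\, s_2,\, s_2 z^{k'},\, s_2 z^{k'} s_2 : 1 \leq k' \leq d-1\}$.

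For left multiplication by $z$, we have $z \cdot z^k a_2 = z^{k+1} a_2$, which lies directly in $\Lambda_1 \Lambda_2$ when $k < d-1$; when $k = d-1$, the degree-$d$ relation $z^d = b_1 z^{d-1} + \cdots + b_{d-1} z + 1$ rewrites $z^d a_2$ as the required $R_0$-combination. For left multiplication by $s_2$, the case $k=0$ is dispatched, for each form of $a_2$, by a single application of the quadratic relation $s_2^2 = a s_2 + 1$. When $k \geq 1$ and $a_2 \in \{1, s_2\}$, the elements $s_2 z^k$ and $s_2 z^k s_2$ already belong to $\Lambda_2$.

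The genuinely non-trivial cases are $a_2 = s_2 z^{k'}$ and $a_2 = s_2 z^{k'} s_2$, where one must reduce $s_2 z^k s_2 z^{k'}$ (respectively, the same multiplied by $s_2$ on the right) to $\mathrm{Span}(\Lambda_1 \Lambda_2)$. Here I would first invoke Lemma \ref{Lemmas2t^ks2In(s2ts2)^k} to write $s_2 z^k s_2$ as an $R_0$-combination $\sum c_i \lambda_{1,i}\, \mu_i$ with $\mu_i \in \Lambda''_2 = \{1,\, s_2,\, s_2 z^m \ (1 \leq m \leq k-1),\, (s_2 z s_2)^j \ (1 \leq j \leq k)\}$, and then multiply on the right by $z^{k'}$. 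The summands with $\mu_i \in \{1, s_2, s_2 z^m\}$ produce elements of $\Lambda_1 \Lambda_2$ immediately, using the $z^d$-relation to normalise $z$-exponents modulo $d$.

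The crucial step---and the main obstacle---is handling the summands $(s_2 z s_2)^j \cdot z^{k'}$ without falling into a circular argument, because a double application of Lemmas \ref{Lemmas2t^ks2In(s2ts2)^k} and \ref{Lemma(s2ts2)^k} would reintroduce the very term $s_2 z^k s_2$ one is trying to decompose. The key observation that breaks this loop is that the braid relation $s_2 z s_2 z = z s_2 z s_2$ is exactly the statement that $s_2 z s_2$ commutes with $z$; hence $(s_2 z s_2)^j z^{k'} = z^{k'} (s_2 z s_2)^j$. Now Lemma \ref{Lemma(s2ts2)^k} expresses $(s_2 z s_2)^j$ in $\mathrm{Span}(\Lambda_1 \Lambda_2)$, and the extra left factor $z^{k'}$ is absorbed into $\Lambda_1$ via the degree-$d$ relation. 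Finally, the case $a_2 = s_2 z^{k'} s_2$ follows from the case $a_2 = s_2 z^{k'}$ already handled: we multiply each term $\lambda_1 \lambda_2$ in the resulting decomposition on the right by $s_2$ and apply the quadratic relation $s_2^2 = a s_2 + 1$ in the one subcase where $\lambda_2$ ends in $s_2$.
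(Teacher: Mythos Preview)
Your proof is correct and follows the paper's approach: reduce to the elements $s_2 z^k s_2 z^{k'}$ and $s_2 z^k s_2 z^{k'} s_2$, apply Lemma~\ref{Lemmas2t^ks2In(s2ts2)^k} to $s_2 z^k s_2$, and then Lemma~\ref{Lemma(s2ts2)^k} to the resulting $(s_2 z s_2)^j$ factors. The commutation $(s_2 z s_2)\,z = z\,(s_2 z s_2)$ that you make explicit is exactly what the paper's phrase ``directly deduce'' suppresses, and your worry about circularity without it is justified. One small divergence: for the case $a_2 = s_2 z^{k'} s_2$ the paper additionally invokes Lemma~\ref{Lemma(s2ts2)^ks2}, whereas you simply observe that right multiplication by $s_2$ preserves $\mathrm{Span}(\Lambda_1\Lambda_2)$ via the quadratic relation $s_2^2 = a s_2 + 1$; this is slightly more economical and bypasses that lemma entirely.
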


\begin{proof}

It is readily checked that $za_1a_2$ belongs to Span$(\Lambda_1\Lambda_2)$. Note that when the power of $z$ exceeds $d-1$, we use Relation 1 of Definition \ref{DefH(de,e,n)}.

It is easily checked that if $a_1 \in \Lambda_1$ and $a_2 = 1$, the element $s_2 a_1a_2$ belongs to Span$(\Lambda_1\Lambda_2)$. Also, when $a_1 = 1$ and $a_2 \in \Lambda_2$, we have that $s_2 a_1a_2$ belongs to Span$(\Lambda_1\Lambda_2)$.

Suppose $a_1 = z^k$ with $1 \leq k \leq d-1$ and $a_2 = s_2$. We have $s_2 a_1 a_2$ is equal to $s_2 z^k s_2$. Hence it belongs to Span$(\Lambda_1\Lambda_2)$.

Suppose $a_1 = z^k$ with $1 \leq k \leq d-1$ and $a_2 = s_2 z^{k'}$ with $1 \leq k' \leq d-1$. We have $s_2 a_1 a_2 = s_2 z^k s_2 z^{k'}$. We replace $s_2 z^k s_2$ by its decomposition given in Lemma \ref{Lemmas2t^ks2In(s2ts2)^k}, then we use the result of Lemma \ref{Lemma(s2ts2)^k} to directly deduce that $s_2 z^k s_2 z^{k'}$ belongs to Span$(\Lambda_1\Lambda_2)$.

Finally, suppose $a_1 = z^k$ with $1 \leq k \leq d-1$ and $a_2 = s_2 z^{k'}s_2$ with $1 \leq k' \leq d-1$.\\
We have $s_2 a_1 a_2$ is equal to $s_2 z^k s_2 z^{k'} s_2$. We replace $s_2 z^k s_2$ by its decomposition given in Lemma \ref{Lemmas2t^ks2In(s2ts2)^k}. Then by the results of Lemmas \ref{Lemma(s2ts2)^ks2} and \ref{Lemma(s2ts2)^k}, we deduce that $s_2 z^k s_2 z^{k'} s_2$ belongs to Span$(\Lambda_1\Lambda_2)$.

\end{proof}

In order to prove Theorem \ref{TheoremNewBasisH(d,1,n)}, we introduce Lemmas \ref{LmScholie11} to \ref{Lemma1010} below that are similar to Lemmas \ref{LmScholie1} to \ref{Lemma10}. Along with Proposition \ref{PropInductionHypothesisG(d,1,n)}, they provide an inductive proof of Theorem \ref{TheoremNewBasisH(d,1,n)} that is similar to the proof of Theorem \ref{TheoremNewBasis}. Therefore, by Proposition 2.3$(ii)$ of \cite{MarinG20G21}, we get a new proof of Theorem \ref{PropositionBMRFreenessTheorem} for the case of the complex reflection groups $G(d,1,n)$. The remaining part of this section is devoted to the proof of Lemmas \ref{LmScholie11} to \ref{Lemma1010}. Let $n \geq 3$. Denote by $S_{n-1}^{*}$ the set of the words over $\{z,s_2,s_3, \cdots, s_{n-1}\}$.

\begin{lemma}\label{LmScholie11}

Let $2 \leq i \leq n$. We have $s_n \cdots s_3 s_2^2s_3 \cdots s_i$ belongs to \emph{Span}$(S_{n-1}^{*}\Lambda_n)$.

\end{lemma}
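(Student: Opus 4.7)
The plan is to mimic precisely the proof of Lemma \ref{LmScholie1}, replacing the base index $3$ by $2$. First I would dispose of the low cases directly. For $i=2$, applying the quadratic relation $s_2^2=as_2+1$ gives $s_n \cdots s_3 s_2^2 \in R_0\, s_n \cdots s_3 s_2 + R_0\, s_n \cdots s_3$, both of which lie in $\Lambda_n$. For $i=3$, applying the quadratic relation twice yields
$$s_n\cdots s_3 s_2^2 s_3 \in R_0\, s_n\cdots s_3 s_2 s_3 + R_0\, s_n\cdots s_4 s_3 + R_0\, s_n\cdots s_4,$$
and each term belongs to $\Lambda_n$ (the first being of the form $s_n\cdots s_2 z^0 s_2\cdots s_3$-type reduced word already listed in $\Lambda_n$, with $k=0$ collapsing to $s_n\cdots s_3 s_2 s_3$).

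For the generic case $i\geq 4$, I would iterate the relation $s_k^2=as_k+1$ starting from the innermost pair $s_2^2$. Explicitly, writing $s_n\cdots s_3 s_2^2 s_3\cdots s_i = a\, s_n\cdots s_3 s_2 s_3\cdots s_i + s_n\cdots s_4 s_3^2 s_4\cdots s_i$ and repeating on $s_3^2, s_4^2,\ldots$ expresses the element as an $R_0$-linear combination of
$$s_n\cdots s_{k+1}\, s_k\, s_{k+1}\cdots s_i \qquad (2\leq k \leq i-1)$$
together with the leftover term $s_n\cdots s_{i+1}\, s_i\, s_{i-1}^2\, s_i$. The leftover term is handled in one stroke: applying $s_{i-1}^2 = as_{i-1}+1$ and then the braid relation $s_i s_{i-1} s_i = s_{i-1} s_i s_{i-1}$ to the piece containing $s_i s_{i-1} s_i$, one obtains elements of the form $s_{i-1} s_n\cdots s_{i-1}$, $s_n\cdots s_{i+1} s_i$, and $s_n\cdots s_{i+1}$, all visibly in $\mathrm{Span}(S_{n-1}^{*}\Lambda_n)$.

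The main work is the remaining dance on the typical term $s_n\cdots s_{k+1} s_k s_{k+1}\cdots s_i$. The plan is to copy verbatim the computation in Lemma \ref{LmScholie1}: apply the braid relation $s_{k+1}s_k s_{k+1} = s_k s_{k+1} s_k$, shift the newly exposed $s_k$ to the left past the commuting letters, then peel off $s_{k+2}s_{k+1}s_{k+2}$, apply a braid, shift $s_{k+1}$ left, and so on, moving the prefix $s_k s_{k+1}\cdots$ to the far left while the suffix $s_i s_{i-1}\cdots$ gets absorbed into a block $s_n s_{n-1}\cdots s_k$. The end result is
$$s_n\cdots s_{k+1}\, s_k\, s_{k+1}\cdots s_i \;=\; s_k s_{k+1}\cdots s_{i-1}\cdot s_n s_{n-1}\cdots s_k,$$
where the left factor is a word over $\{s_2,\ldots,s_{n-1}\}$ since $k\geq 2$ and $i-1\leq n-1$, hence lies in $S_{n-1}^{*}$, while the right factor lies in $\Lambda_n$.

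The main obstacle is not conceptual but purely bookkeeping: one must verify carefully that, with the base index lowered from $3$ to $2$, every commutation and braid step used in Lemma \ref{LmScholie1} still applies (in particular the braid relation $s_2 s_3 s_2 = s_3 s_2 s_3$ and the commutations $s_2 s_j = s_j s_2$ for $j\geq 4$ are exactly the ingredients needed, all present in the defining relations of $H(d,1,n)$ recalled at the start of this section). Since no $z$-generator is ever introduced in any step, the argument transfers verbatim.
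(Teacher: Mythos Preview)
Your proposal is correct and follows essentially the same route as the paper: both reduce to the computation of Lemma~\ref{LmScholie1} with the base index lowered from $3$ to $2$, splitting into the ``leftover'' term $s_n\cdots s_{i+1}s_is_{i-1}^2s_i$ and the generic terms $s_n\cdots s_{k+1}s_ks_{k+1}\cdots s_i$, and handling the latter via the braid-shift argument to obtain $s_ks_{k+1}\cdots s_{i-1}\cdot s_n\cdots s_k \in S_{n-1}^{*}\Lambda_n$.

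One small slip: in your $i=3$ case you assert that $s_n\cdots s_3 s_2 s_3$ lies in $\Lambda_n$ as a ``$k=0$ collapse'' of $s_n\cdots s_2 z^k s_2\cdots s_3$. That is not right: the definition of $\Lambda_n$ requires $1\leq k\leq d-1$, and in any case $s_n\cdots s_2 z^0 s_2 s_3$ would contain $s_2^2$, not give $s_n\cdots s_3 s_2 s_3$. The correct treatment is exactly your generic one with $k=2$: apply the braid relation $s_3s_2s_3=s_2s_3s_2$ and shift $s_2$ left to get $s_2\cdot(s_n\cdots s_2)\in S_{n-1}^{*}\Lambda_n$. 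Since your generic argument already covers this, the proof stands; just drop the separate (and slightly misstated) $i=3$ paragraph.
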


\begin{proof}

The proof is the same as the proof of Lemma \ref{LmScholie1}. If $i = 2$, we have that $s_n \cdots s_3 s_2^2$ belongs to $R_0s_n \cdots s_3s_2 + R_0 s_n \cdots s_3$. We continue as in the proof of Lemma \ref{LmScholie1}, we get two terms of the form (see line 7 of the proof of Lemma \ref{LmScholie1}):\\ $s_n \cdots s_{k+1}s_ks_{k+1} \cdots s_i$ with $k+1 \leq i$ and $s_n \cdots s_{i+1}s_is_{i-1}^2s_i$.

We also get that the first term $s_n \cdots s_{i+1}s_is_{i-1}^2s_i$ belongs to $R_0 s_{i-1}s_n \cdots s_{i-1} + R_0 s_n \cdots s_{i+1}s_i + R_0 s_n \cdots s_{i+1}$ (the last term is equal to $1$ if $i = n$). Hence the element $s_n \cdots s_{i+1}s_is_{i-1}^2s_i$ belongs to Span$(S_{n-1}^{*}\Lambda_n)$. 

As in the proof of Lemma \ref{LmScholie1}, the element $s_n \cdots s_{k+1}s_ks_{k+1} \cdots s_i$ is equal to $s_ks_{k+1} \cdots s_{i-1}s_n s_{n-1} \cdots s_k$. Hence it belongs to Span$(S_{n-1}^{*}\Lambda_n)$.

\end{proof}

\begin{lemma}\label{Lemma22}

If $a_{n-1} = s_{n-1}s_{n-2} \cdots s_i$ with $2 \leq i \leq n-1$ and $a_n = s_n \cdots s_{i'}$ with $2 \leq i' \leq n$, then $s_n(a_{n-1}a_n)$ belongs to \emph{Span}$(\Lambda_{n-1}\Lambda_n)$.

\end{lemma}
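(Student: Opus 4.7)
The plan is to mirror the proof of Lemma \ref{Lemma2} essentially verbatim. The key observation is that the computation in Lemma \ref{Lemma2} relies only on the type-$A$ braid relations $s_j s_{j+1} s_j = s_{j+1} s_j s_{j+1}$, the commutations $s_j s_k = s_k s_j$ for $|j-k| > 1$, and the quadratic relations $s_j^2 = a s_j + 1$, all of which are available in $H(d,1,n)$ for the whole range $2 \leq j \leq n$. In particular, since the product $a_{n-1} a_n$ never involves $z$, the quasi-Coxeter relation $z s_2 z s_2 = s_2 z s_2 z$, which is the sole place where the presentation of $B(d,1,n)$ departs from a type-$A$ Artin presentation on $s_2, \ldots, s_n$, is not invoked. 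So the argument transfers to the wider index range $2 \leq i, i' \leq n$ that the present lemma demands.

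For the case $i < i'$, I would start from $s_n (s_{n-1} s_{n-2} \cdots s_i)(s_n s_{n-1} \cdots s_{i'})$, shift the middle $s_n$ leftward past the commuting letters $s_i, \ldots, s_{n-2}$, apply the braid move $s_n s_{n-1} s_n = s_{n-1} s_n s_{n-1}$, and iterate with the newly exposed $s_{n-1}$. After $n - i'$ rounds the expression collapses to $s_{n-1} s_{n-2} \cdots s_{i'-1}\, s_n s_{n-1} \cdots s_i$, which lies in $\Lambda_{n-1} \Lambda_n$. For the case $i \geq i'$, the same shift-and-braid routine eventually produces a factor $s_i^2$ near the index where the two decreasing chains meet; applying the quadratic relation splits the expression into two summands, each of which can be straightened by a final batch of commutations and braid moves to yield $a\, s_{n-1} \cdots s_i\, s_n \cdots s_{i'} + s_{n-1} \cdots s_{i'}\, s_n \cdots s_{i+1}$, exactly as at the end of the proof of Lemma \ref{Lemma2}.

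The main thing to verify, and the only place where Lemma \ref{Lemma22} could conceivably differ from its model, is that the boundary values $i = 2$ or $i' = 2$ do not disrupt the sequence of moves. This reduces to the routine check that $s_2 s_3 s_2 = s_3 s_2 s_3$ and $s_2^2 = a s_2 + 1$ play exactly the same structural role as their higher-index counterparts inside the rewriting sequence; once this is noted, the proof is identical in structure and bookkeeping to that of Lemma \ref{Lemma2}, and no genuine new obstacle arises.
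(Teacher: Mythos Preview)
Your proposal is correct and matches the paper's approach exactly: the paper's proof simply states that the argument is the same as for Lemma \ref{Lemma2} and records the resulting expressions $s_{n-1}\cdots s_{i'-1}\,s_n\cdots s_i$ (for $i<i'$) and $a\,s_{n-1}\cdots s_i\,s_n\cdots s_{i'} + s_{n-1}\cdots s_{i'}\,s_n\cdots s_{i+1}$ (for $i\geq i'$). Your observation that only the type-$A$ relations on $s_2,\ldots,s_n$ are used, so that the extended index range down to $2$ poses no obstacle, is precisely the content of the paper's one-line reduction.
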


\begin{proof}

The proof is the same as for Lemma \ref{Lemma2}.

If $i < i'$, then we get $s_n(a_{n-1}a_n) = s_{n-1}s_{n-2} \cdots s_{i'-1}s_n \cdots s_i$ which belongs to Span$(\Lambda_{n-1}\Lambda_n)$.

If $i \geq i'$, then we get $s_n(a_{n-1}a_n) = a s_{n-1}\cdots s_i s_n \cdots s_{i'} + s_n \cdots s_{i'} s_n \cdots s_{i+1}$ which also belongs to Span$(\Lambda_{n-1}\Lambda_n)$.

\end{proof}

\begin{lemma}\label{Lemma33}

If $a_{n-1} = s_{n-1}\cdots s_{i}$ with $2 \leq i \leq n-1$ and $a_n = s_n \cdots s_2 z^k$ with $0 \leq k \leq d-1$, then $s_n(a_{n-1}a_n)$ belongs to \emph{Span}$(\Lambda_{n-1}\Lambda_n)$.

\end{lemma}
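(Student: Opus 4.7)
The plan is to mimic the short proof of Lemma~\ref{Lemma3} (its analogue in $G(e,e,n)$), adapted to the generator $z$ of $G(d,1,n)$. Concretely, I would apply Lemma~\ref{Lemma22} in the case $i' = 2$ (which is allowed since $2 \leq i \leq n-1$ gives $i \geq i' = 2$), and then push $z^k$ to the left past those $s_j$ with $j \geq 3$, which commute with $z$ by relation~$2$ of the presentation of $H(d,1,n)$.

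Step by step: first, write $s_n(a_{n-1}a_n) = s_n\,s_{n-1}\cdots s_i\,s_n\cdots s_2\,z^k$. Apply the $i \geq i'$ computation from Lemma~\ref{Lemma22} with $i' = 2$ (the braid and quadratic manipulations do not touch the trailing $z^k$), yielding
\[
s_n(a_{n-1}a_n) \;=\; a\, s_{n-1}\cdots s_i\,s_n\cdots s_2\,z^k \;+\; s_{n-1}\cdots s_2\,s_n\cdots s_{i+1}\,z^k.
\]
In the second summand, the factor $s_n \cdots s_{i+1}$ involves only generators $s_j$ with $j \geq i+1 \geq 3$, each of which commutes with $z$. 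I would slide $z^k$ to the left across this block to obtain
\[
s_n(a_{n-1}a_n) \;=\; a\, s_{n-1}\cdots s_i\,s_n\cdots s_2\,z^k \;+\; s_{n-1}\cdots s_2\, z^k\, s_n\cdots s_{i+1}.
\]

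Finally, I would verify that each summand lies in $\Lambda_{n-1}\Lambda_n$ by inspection of the definition of the $\Lambda_j$: $s_{n-1}\cdots s_i \in \Lambda_{n-1}$ and $s_n\cdots s_2\,z^k \in \Lambda_n$ (covering both $k=0$ and $1 \leq k \leq d-1$), while $s_{n-1}\cdots s_2\,z^k \in \Lambda_{n-1}$ and $s_n\cdots s_{i+1}\in \Lambda_n$ (the index $i+1$ satisfies $2 \leq i+1 \leq n$). There is essentially no obstacle; the only point to be slightly careful about is the commutation argument, which really uses $i \geq 2$ so that all the shifted $s_j$ have index $\geq 3$ — had $s_2$ appeared in that block, the commutation with $z$ would fail and one would be forced into the more intricate analysis of Lemmas~\ref{LmScholie11} and onward.
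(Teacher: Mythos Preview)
Your proposal is correct and follows essentially the same approach as the paper: apply the $i \geq i'$ case of Lemma~\ref{Lemma22} with $i'=2$, then commute $z^k$ past $s_n\cdots s_{i+1}$ (all indices $\geq 3$), obtaining exactly the expression $a\,s_{n-1}\cdots s_i\,s_n\cdots s_2\,z^k + s_{n-1}\cdots s_2\,z^k\,s_n\cdots s_{i+1}$, which the paper also writes down verbatim.
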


\begin{proof}

This case correspond to the case $i'=2$ in the proof of Lemma \ref{Lemma22} with a right multiplication by $z^k$ for $0 \leq k \leq d-1$. Since $i \geq 2$, by the case $i \geq i'$ in the proof of Lemma \ref{Lemma22}, we get, using the same technique as in the proof of Lemma \ref{Lemma3}, that $s_n(a_{n-1}a_n)$ is equal to $a s_{n-1} \cdots s_i s_n \cdots s_2 z^k + s_{n-1}\cdots s_2 z^k s_n \cdots s_{i+1}$ which belongs to Span$(\Lambda_{n-1}\Lambda_n)$.

\end{proof}

\begin{lemma}\label{Lemma44}

If $a_{n-1} = s_{n-1} \cdots s_i$ with $2 \leq i \leq n-1$ and $a_n = s_n \cdots s_2 z^k s_2 \cdots s_{i'}$ with $1 \leq k \leq d-1$ and $2 \leq i' \leq n$, then $s_n(a_{n-1}a_n)$ belongs to \emph{Span}$(\Lambda_{n-1}\Lambda_n)$.

\end{lemma}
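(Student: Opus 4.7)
The plan is to mimic the argument of Lemma \ref{Lemma4} for $H(e,e,n)$, adapted to the presentation of $H(d,1,n)$. First I would apply Lemma \ref{Lemma33} with the same $a_{n-1} = s_{n-1}\cdots s_i$ but with $s_n\cdots s_2 z^k$ in place of $a_n$, obtaining
\[
s_n(s_{n-1}\cdots s_i \cdot s_n\cdots s_2 z^k) = a\, s_{n-1}\cdots s_i\, s_n\cdots s_2 z^k + s_{n-1}\cdots s_2 z^k\, s_n\cdots s_{i+1}.
\]
Right-multiplying by $s_2\cdots s_{i'}$ yields
\[
s_n(a_{n-1}a_n) = a\, s_{n-1}\cdots s_i\, s_n\cdots s_2 z^k s_2\cdots s_{i'} + s_{n-1}\cdots s_2 z^k\, s_n\cdots s_{i+1}\, s_2\cdots s_{i'}.
\]
The first summand is already of the form $\lambda_{n-1}\lambda_n$ with $\lambda_{n-1} = s_{n-1}\cdots s_i \in \Lambda_{n-1}$ and $\lambda_n = s_n\cdots s_2 z^k s_2\cdots s_{i'} \in \Lambda_n$, hence it lies in Span$(\Lambda_{n-1}\Lambda_n)$. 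It remains to handle the second summand by analyzing the commutation/braid behavior of $s_n\cdots s_{i+1}\, s_2\cdots s_{i'}$.

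I split into two cases. If $i' < i$, every letter of $s_2\cdots s_{i'}$ has index at most $i'< i+1$, so by the commutation relations $s_js_k = s_ks_j$ for $|j-k|>1$, the whole block $s_2\cdots s_{i'}$ slides to the left of $s_n\cdots s_{i+1}$, giving
\[
s_{n-1}\cdots s_2 z^k s_2\cdots s_{i'}\cdot s_n\cdots s_{i+1} \in \Lambda_{n-1}\Lambda_n.
\]
If $i' \geq i$, I factor $s_2\cdots s_{i'} = (s_2\cdots s_{i-1})(s_i s_{i+1}\cdots s_{i'})$. The prefix $s_2\cdots s_{i-1}$ again commutes past $s_n\cdots s_{i+1}$ and shifts to the left. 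For the remaining middle portion $s_n\cdots s_{i+1}\,s_is_{i+1}\cdots s_{i'}$, I carry out the exact sequence of braid moves (1), (2), (1), (1), (2), (1) used in the second half of the proof of Lemma \ref{Lemma4} — repeatedly pulling $s_{i+1}s_is_{i+1} = s_is_{i+1}s_i$ and pushing the ascending tail to the left — which transforms the middle factor step by step into $s_is_{i+1}\cdots s_{i'-1}\, s_n s_{n-1}\cdots s_i$. Combined with the shifted prefix, the second summand becomes $s_{n-1}\cdots s_2 z^k\, s_2\cdots s_{i'-1}\cdot s_n s_{n-1}\cdots s_i$, an element of $\Lambda_{n-1}\Lambda_n$.

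The only point that requires care is that all the braid manipulations take place among the generators $s_2,\ldots,s_n$ with $j \geq 2$, and the factor $z^k$ appears only sandwiched between two $s_2$'s at the far left of the expression. Since $z$ commutes with every $s_j$ for $j \geq 3$, it is never disturbed by the braid moves on the right — $z^k$ travels along with the leading $s_2$ as a single block, exactly as $t_k$ did in the $H(e,e,n)$ case. Thus no new relations (such as the braid relation $zs_2zs_2 = s_2zs_2z$ or the polynomial relation on $z$) ever need to be invoked, and the combinatorial structure of Lemma \ref{Lemma4} transfers verbatim. I do not foresee a substantive obstacle; the main bookkeeping is simply to verify that the final reduced word has the correct shape $s_{n-1}\cdots s_2 z^k s_2\cdots s_j$ in the $\Lambda_{n-1}$-factor with $2 \leq j \leq n-1$, which holds since $j = i'-1 \leq n-1$.
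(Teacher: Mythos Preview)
Your proposal is correct and follows essentially the same approach as the paper: apply Lemma \ref{Lemma33} to split $s_n(a_{n-1}a_n)$ into two summands, observe the first is already in $\Lambda_{n-1}\Lambda_n$, and treat the second by the commutation/braid manipulations of Lemma \ref{Lemma4}, splitting into the cases $i'<i$ and $i'\geq i$ with the same final forms. Your additional remark that $z^k$ is never touched by the braid moves (since $z$ commutes with $s_j$ for $j\geq 3$) is a useful clarification that the paper leaves implicit.
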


\begin{proof}

The proof is exactly the same as the proof of Lemma \ref{Lemma4}. According to the computation in the proof of Lemma \ref{Lemma33}, we have\\
$s_n(a_{n-1}a_n) = a s_{n-1} \cdots s_i s_n \cdots s_2 z^k (s_2 \cdots s_{i'}) + s_{n-1} \cdots s_2 z^k s_n \cdots s_{i+1}(s_2 \cdots s_{i'})$.\\
The first term is an element of Span$(\Lambda_{n-1}\Lambda_n)$. For the second term, we follow exactly the proof (starting from line 3) of Lemma \ref{Lemma4} :\\
If $i' < i$, we get $s_{n-1} \cdots s_2 z^k s_2 \cdots s_{i'} s_n \cdots s_{i+1}$ which belongs to Span$(\Lambda_{n-1}\Lambda_n)$.\\
If $i' \geq i$, we get $s_{n-1} \cdots s_2 z^k s_2 \cdots s_{i'-1} s_n \cdots s_i$ which also belongs to Span$(\Lambda_{n-1}\Lambda_n)$.

\end{proof}

\begin{lemma}\label{Lemma55}

If $a_{n-1} = s_{n-1} \cdots s_2 z^k$ with $1 \leq k \leq d-1$ and $a_n = s_n \cdots s_i$ with $2 \leq i \leq n$, then $s_n(a_{n-1}a_n)$ belongs to \emph{Span}$(\Lambda_{n-1}\Lambda_n)$.

\end{lemma}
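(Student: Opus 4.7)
The plan is to mimic, with minor adjustments, the proof of Lemma \ref{Lemma5} for the $G(e,e,n)$ setting. The crucial structural fact that makes this work is that in the presentation of $H(d,1,n)$ the generator $z$ commutes with $s_3, s_4, \ldots, s_n$, so $z^k$ plays exactly the role that $t_k$ plays in Lemma \ref{Lemma5}; the only index shift is that the ``special'' generator (the one not commuting with $z$) is now $s_2$ instead of $s_3$.

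For the small base case $n=3$ the claim will be verified by direct manipulation: $s_3 \cdot s_2 z^k \cdot s_3\cdots s_i$ simplifies, using $zs_3=s_3z$ and the braid relation $s_3s_2s_3=s_2s_3s_2$, into an element of $\Lambda_2\Lambda_3$. For $n\geq 4$ I would unfold
\[
s_n(a_{n-1}a_n)=s_n\cdot s_{n-1}s_{n-2}\cdots s_2\,z^k\cdot s_n s_{n-1}\cdots s_i
\]
and then carry out the shift-and-braid iteration of Lemma \ref{Lemma5}: the first $s_n$ of $a_n$ slides leftward past everything commuting with it (namely $z^k$ and $s_2,\ldots,s_{n-2}$), hits $s_{n-1}$, and the braid $s_ns_{n-1}s_n=s_{n-1}s_ns_{n-1}$ is invoked; the move is then iterated on $s_{n-1},s_{n-2},\ldots$, as long as the moving generator has index $\geq 3$ and therefore still commutes with $z^k$.

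Two subcases will arise, paralleling the two subcases of Lemma \ref{Lemma5}. If $i>2$, the iteration terminates cleanly (possibly with a final instance of the braid relation $s_3s_2s_3=s_2s_3s_2$ followed by commutations of the freed $s_2$ past higher-index generators, when the iteration reaches index $3$) and delivers
\[
s_{n-1}s_{n-2}\cdots s_{i-1}\cdot s_ns_{n-1}\cdots s_2\,z^k,
\]
which manifestly sits in $\Lambda_{n-1}\Lambda_n$. If $i=2$, the iteration processes $s_n,s_{n-1},\ldots,s_3$ on the right, leaving a residual $s_2$ to the right of $z^k$; one then again applies $s_3s_2s_3=s_2s_3s_2$ and slides the new $s_2$ leftward past $s_4,\ldots,s_{n-1}$ (all of which commute with $s_2$) to obtain
\[
s_{n-1}s_{n-2}\cdots s_2\cdot s_ns_{n-1}\cdots s_2\,z^k s_2,
\]
which also lies in $\Lambda_{n-1}\Lambda_n$.

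The main obstacle is essentially bookkeeping: at each step of the iteration one must verify that the moving generator has index $\geq 3$, so that it truly commutes with $z^k$, and one must arrange the tail so that the final braid-cleanup produces the exact shapes prescribed for $\Lambda_{n-1}$ and $\Lambda_n$. The virtue of the $e=1$ case is that the only ``bad'' neighbor of $z$ is $s_2$, and the interaction between $s_2$ and $s_3$ is a three-strand braid; the four-strand relation $zs_2zs_2=s_2zs_2z$ is never needed here, and in particular none of the preparatory Lemmas \ref{Lemma(s2ts2)^k}, \ref{Lemma(s2ts2)^ks2}, \ref{Lemmas2t^ks2In(s2ts2)^k} must be invoked for this lemma.
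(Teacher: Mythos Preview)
Your proposal is correct and follows essentially the same approach as the paper: transplant the shift-and-braid iteration of Lemma~\ref{Lemma5}, using that $z^k$ commutes with $s_3,\dots,s_n$, and land on $s_{n-1}\cdots s_{i-1}\cdot s_n\cdots s_2 z^k$ for $i>2$ and on $s_{n-1}\cdots s_2\cdot s_n\cdots s_2 z^k s_2$ for $i=2$. One cosmetic remark: in the $i=2$ endgame, the braid $s_3s_2s_3=s_2s_3s_2$ is already the last step of processing $s_3$ in the iteration (so there is no separate ``then again'' step afterwards), and the freed $s_2$ slides past $s_4,\dots,s_n$, not just $s_4,\dots,s_{n-1}$; the residual $s_2$ to the right of $z^k$ then needs no further treatment because $s_n\cdots s_2 z^k s_2$ is itself an element of $\Lambda_n$.
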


\begin{proof}

We follow exactly the proof of Lemma \ref{Lemma5} and we get the following terms.

If $i = 2$, we get $s_{n-1} \cdots s_2 s_n \cdots s_2 z^k s_2$ (see line 9 of the proof of Lemma \ref{Lemma5}) which belongs to Span$(\Lambda_{n-1}\Lambda_n)$.

If $i > 2$, we get $s_{n-1} \cdots s_{i-1} s_n \cdots s_2 z^k$ (see the last line of the proof of Lemma \ref{Lemma5}) which also belongs to Span$(\Lambda_{n-1}\Lambda_n)$.

\end{proof}

\begin{lemma}\label{Lemma66}

If $a_{n-1} = s_{n-1} \cdots s_2z^k$ with $1 \leq k \leq d-1$ and $a_n = s_n \cdots s_2 z^l$ with $1 \leq l \leq d-1$, then $s_n(a_{n-1}a_n)$ belongs to \emph{Span}$(\Lambda_{n-1}\Lambda_n)$.

\end{lemma}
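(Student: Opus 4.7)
The plan is to adapt the argument of Lemma~\ref{Lemma6}: first invoke Lemma~\ref{Lemma55} to reduce to a manageable central factor, then decompose that factor using the preparatory Lemmas~\ref{Lemma(s2ts2)^k} and~\ref{Lemmas2t^ks2In(s2ts2)^k} (which will play the role Lemma~\ref{LemmaTLTKInVect} played in the $t_kt_l$ reduction). Specifically, applying Lemma~\ref{Lemma55} with $i=2$ to $a'_{n-1} = s_{n-1}\cdots s_2 z^k$ and $a'_n = s_n\cdots s_2$ yields the identity $s_n(s_{n-1}\cdots s_2 z^k)(s_n\cdots s_2) = s_{n-1}\cdots s_2\, s_n\cdots s_2\, z^k s_2$. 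Right-multiplying by $z^l$ and regrouping $s_n\cdots s_2 = (s_n\cdots s_3)\, s_2$, we obtain
$$
s_n(a_{n-1}a_n) = (s_{n-1}\cdots s_2)(s_n\cdots s_3)(s_2 z^k s_2 z^l).
$$
It then suffices to express $s_2 z^k s_2 z^l$ as an $R_0$-linear combination of elements of $\Lambda_1\Lambda_2$; the outer factors will recombine to give elements of $\Lambda_{n-1}\Lambda_n$.

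By Lemma~\ref{Lemmas2t^ks2In(s2ts2)^k}, $s_2 z^k s_2 \in \sum R_0 \lambda_1\lambda_2$ with $\lambda_1 \in \Lambda_1$ and $\lambda_2 \in \Lambda''_2$. I plan to multiply each such term by $z^l$ on the right and handle the cases separately: for $\lambda_2 \in \{1, s_2, s_2 z^m\}$ the powers of $z$ merge (reduced if necessary using $z^d - b_1 z^{d-1} - \cdots - 1 = 0$) to yield a term of the form $z^c \nu$ with $\nu \in \Lambda_2$; for $\lambda_2 = (s_2 z s_2)^m$, the iterated braid relation $s_2 z s_2 \cdot z = z \cdot s_2 z s_2$ gives $(s_2 z s_2)^m z^l = z^l (s_2 z s_2)^m$, and Lemma~\ref{Lemma(s2ts2)^k} then expresses $(s_2 z s_2)^m$ in terms of $\Lambda_1 \cdot \Lambda'_2$-elements with $\Lambda'_2 \subseteq \Lambda_2$ (since $m \leq k \leq d-1$). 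Collecting cases,
$$
s_2 z^k s_2 z^l = \sum_i r_i\, z^{c_i}\mu_i, \qquad r_i\in R_0,\ 0 \leq c_i \leq d-1,\ \mu_i \in \Lambda_2.
$$

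Substituting back and using that $z$ commutes with $s_j$ for $j \geq 3$ to slide each $z^{c_i}$ leftward past $s_n\cdots s_3$, I arrive at
$$
s_n(a_{n-1}a_n) = \sum_i r_i\, (s_{n-1}\cdots s_2\, z^{c_i})(s_n\cdots s_3\, \mu_i).
$$
Each factor $s_{n-1}\cdots s_2\, z^{c_i}$ lies in $\Lambda_{n-1}$ (it is $s_{n-1}\cdots s_2$ when $c_i=0$, and of the third type of $\Lambda_{n-1}$ otherwise). Each factor $s_n\cdots s_3\, \mu_i$ lies in $\Lambda_n$, since $\mu_i \in \Lambda_2 = \{1, s_2, s_2 z^m, s_2 z^m s_2\}$ forces the product to be one of $s_n\cdots s_3$, $s_n\cdots s_2$, $s_n\cdots s_2 z^m$, or $s_n\cdots s_2 z^m s_2$. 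This gives the desired conclusion $s_n(a_{n-1}a_n) \in$ Span$(\Lambda_{n-1}\Lambda_n)$.

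The main obstacle is the absence of a three-term braid relation between $z$ and $s_2$ analogous to $s_3 t_k s_3 = t_k s_3 t_k$, which was used crucially in Lemma~\ref{Lemma6} to migrate a copy of $t_k$ cleanly into the outer $\Lambda_{n-1}$-slot. Only the four-term commutation $s_2 z s_2 \cdot z = z \cdot s_2 z s_2$ is available, so no copy of $z^k$ can split off directly from the central factor $s_2 z^k s_2$. This is precisely the role of the heavier preparatory Lemmas~\ref{Lemma(s2ts2)^k}, \ref{Lemma(s2ts2)^ks2}, and \ref{Lemmas2t^ks2In(s2ts2)^k}: they absorb the attendant rearrangements into a linear combination whose terms, after application of the $(s_2 z s_2)\text{-}z$ commutation, fit into the target form $\Lambda_1\Lambda_2$ and therefore propagate cleanly to $\Lambda_{n-1}\Lambda_n$.
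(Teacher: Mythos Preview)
Your proof is correct and follows essentially the same approach as the paper: reduce via Lemma~\ref{Lemma55} (case $i=2$) to $(s_{n-1}\cdots s_2)(s_n\cdots s_3)(s_2 z^k s_2 z^l)$, decompose the central factor over $\Lambda_1\Lambda_2$, then slide the $z^{c_i}$ past $s_n\cdots s_3$ and check the resulting factors lie in $\Lambda_{n-1}$ and $\Lambda_n$. The only difference is that the paper invokes Proposition~\ref{PropInductionHypothesisG(d,1,n)} (the already-established $n=2$ case) to handle $s_2 z^k s_2 z^l$ in one stroke, whereas you re-derive that decomposition from Lemmas~\ref{Lemma(s2ts2)^k} and~\ref{Lemmas2t^ks2In(s2ts2)^k}; citing the base case directly would shorten your argument.
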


\begin{proof}

By the case $i = 2$ in the proof of Lemma \ref{Lemma55}, we get\\ $s_n(a_{n-1}a_n) = s_{n-1} \cdots s_2 s_n \cdots s_3s_2z^ks_2z^l$. If we replace $s_2z^ks_2z^l$ by its decomposition over $\Lambda_1\Lambda_2$ (this is the case $n=2$ of Theorem \ref{TheoremNewBasisH(d,1,n)}, see Proposition \ref{PropInductionHypothesisG(d,1,n)}), we get the three following terms:
\begin{itemize}
\item $s_{n-1} \cdots s_2 s_n \cdots s_3 z^c$ with $0 \leq c \leq d-1$,
\item $s_{n-1} \cdots s_2 s_n \cdots s_3 z^cs_2 z^{c'}$ with $0 \leq c \leq d-1$ and $0 \leq c' \leq d-1$,
\item $s_{n-1} \cdots s_2 s_n \cdots s_3 z^cs_2 z^{c'}s_2$ with $0 \leq c \leq d-1$ and $1 \leq c' \leq d-1$.
\end{itemize}
The first term is equal to $s_{n-1} \cdots s_2 z^c s_n \cdots s_3$ which belongs to Span$(\Lambda_{n-1}\Lambda_n)$.\\
The second term is equal to $s_{n-1} \cdots s_2 z^c s_n \cdots s_3s_2z^{c'}$ which belongs to Span$(\Lambda_{n-1}\Lambda_n)$.\\
Finally, the third term is equal to $s_{n-1} \cdots s_2 z^c s_n \cdots s_2 z^{c'}s_2$ which also belongs to Span$(\Lambda_{n-1}\Lambda_n)$.

\end{proof}

\begin{lemma}\label{Lemma77}

If $a_{n-1} = s_{n-1} \cdots s_2 z^k$ with $1 \leq k \leq d-1$ and $a_n = s_n \cdots s_2 z^l s_2 \cdots s_i$ with $2 \leq i \leq n$ and $1 \leq l \leq d-1$, then $s_n(a_{n-1}a_n)$ belongs to \emph{Span}$(S_{n-1}^{*}\Lambda_n)$.

\end{lemma}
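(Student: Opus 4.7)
The plan is to translate the proof of Lemma \ref{Lemma7} into the $H(d,1,n)$ setting, treating $s_2 z^k s_2$ as the analog of $t_k$ and using Lemmas \ref{Lemma(s2ts2)^k}--\ref{Lemmas2t^ks2In(s2ts2)^k} where Lemma \ref{LemmaTLTKInVect} was used. First I would repeat verbatim the manipulations from the proof of Lemma \ref{Lemma66}: those moves act only on the leading $s_n$ and the interior generators $s_3,\ldots,s_{n-1}$, and a single application of $zs_j = s_jz$ for $j\geq 3$ is enough to let them commute past the trailing block $s_2\cdots s_i$ without any change. This would yield the identity
$$s_n(a_{n-1}a_n) \;=\; s_{n-1}\cdots s_2 \;\cdot\; s_n\cdots s_3\, s_2 z^k s_2 z^l\, s_2\cdots s_i.$$
Since the prefix $s_{n-1}\cdots s_2$ already lies in $S_{n-1}^{*}$, it is enough to place $s_n\cdots s_3\, s_2 z^k s_2 z^l\, s_2\cdots s_i$ into $\mathrm{Span}(S_{n-1}^{*}\Lambda_n)$.

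The case $i=2$ should be immediate: the middle block $s_2 z^k s_2 z^l s_2 = s_2\cdot z^k\cdot(s_2 z^l s_2)$ is of the form $s_2 a_1 a_2$ with $a_1 = z^k\in\Lambda_1$ and $a_2 = s_2 z^l s_2\in\Lambda_2$, and Proposition \ref{PropInductionHypothesisG(d,1,n)} decomposes it over $\Lambda_1\Lambda_2$; commuting the resulting central powers of $z$ through $s_3,\ldots,s_n$ lands each summand in $\mathrm{Span}(S_{n-1}^{*}\Lambda_n)$. For $i\geq 3$, I would expand $s_2 z^k s_2$ using Lemma \ref{Lemmas2t^ks2In(s2ts2)^k} into a linear combination of $1$, $s_2$, the $s_2 z^j$ for $1\leq j\leq k-1$, and the powers $(s_2 z s_2)^j$ for $1\leq j\leq k$, each scaled by some $z^c$. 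Every resulting summand $s_n\cdots s_3\cdot z^c\, Y\, z^l s_2 s_3\cdots s_i$ (with $Y$ one of those elementary words) can then be massaged into the target form by the braid relations $s_j s_{j+1}s_j = s_{j+1}s_j s_{j+1}$ and $s_2 zs_2 z = zs_2 zs_2$, the commutations $zs_j = s_jz$ for $j\geq 3$, and the quadratic relations $s_j^2 = as_j+1$, exactly mirroring the sequence of moves in Lemma \ref{Lemma7}.

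The hard part will be the subcase parallel to the branch $x=0$ in the proof of Lemma \ref{Lemma7}: in that branch the rearrangement eventually forces a collision of a decreasing run $s_\cdot\cdots s_2$ with the trailing increasing run $s_2 s_3\cdots s_i$, producing a central $s_2^2$ sandwiched between the decreasing pattern $s_n\cdots s_3$ and the increasing pattern $s_3\cdots s_i$. This is precisely the configuration handled by Lemma \ref{LmScholie11}, which places it into $\mathrm{Span}(S_{n-1}^{*}\Lambda_n)$. Every other summand either already has the required shape or reduces to one of the preceding lemmas (\ref{Lemma55}, \ref{Lemma66}) or to Proposition \ref{PropInductionHypothesisG(d,1,n)}. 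The bookkeeping is voluminous but essentially mechanical once the correspondence with the proof of Lemma \ref{Lemma7} is fixed.
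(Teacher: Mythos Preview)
Your outline is correct and lands on the same identity the paper uses,
\[
s_n(a_{n-1}a_n)=s_{n-1}\cdots s_2\;\cdot\;s_n\cdots s_3\,(s_2 z^k s_2 z^l)\,s_2\cdots s_i,
\]
and the reduction to Lemma~\ref{LmScholie11} for the colliding $s_2^2$ is exactly what the paper does as well.

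The difference is in how the middle block is handled for $i\geq 3$. You propose to expand $s_2 z^k s_2$ via Lemma~\ref{Lemmas2t^ks2In(s2ts2)^k}, which introduces the powers $(s_2 z s_2)^j$; these have no analogue in the proof of Lemma~\ref{Lemma7} (where $t_kt_l$ is decomposed directly into elements of $\Lambda_2$), so the ``mirroring'' you invoke is not literal --- you would still need Lemma~\ref{Lemma(s2ts2)^k} to resolve those powers, adding a layer of bookkeeping. The paper bypasses this by applying Proposition~\ref{PropInductionHypothesisG(d,1,n)} not to $s_2 z^k s_2$ but to the full product $s_2 z^k s_2 z^l$, writing it directly as an $R_0$-combination of elements of $\Lambda_1\Lambda_2$. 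After commuting the $\Lambda_1$-part $z^c$ past $s_n\cdots s_3$, this yields exactly three shapes to treat:
\[
s_n\cdots s_3(s_2\cdots s_i),\qquad
s_n\cdots s_3 s_2 z^{c'}(s_2\cdots s_i),\qquad
s_n\cdots s_2 z^{c'} s_2(s_2\cdots s_i).
\]
The first is handled by a short braid computation giving $s_2\cdots s_{i-1}\,s_n\cdots s_2$; the second is in $\Lambda_n$ when $c'\neq 0$ and reduces to Lemma~\ref{LmScholie11} when $c'=0$; the third, after the quadratic relation on the central $s_2^2$, splits into an element of $\Lambda_n$ and a term that reduces to the first case with an extra $z^{c'}$ commuted through. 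Your route reaches the same destination, but the paper's choice of decomposing $s_2 z^k s_2 z^l$ in one shot via Proposition~\ref{PropInductionHypothesisG(d,1,n)} is the true analogue of the use of Lemma~\ref{LemmaTLTKInVect} in Lemma~\ref{Lemma7}, and it makes the case analysis shorter.
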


\begin{proof}

According to the proof of the previous lemma, we have to deal with the following three terms:
\begin{itemize}
\item $s_{n-1} \cdots s_2 z^c s_n \cdots s_3 (s_2 \cdots s_i)$ for $0 \leq c \leq d-1$,
\item $s_{n-1} \cdots s_2 z^c s_n \cdots s_3s_2z^{c'}(s_2 \cdots s_i)$ for $0 \leq c \leq d-1$ and $0 \leq c' \leq d-1$,
\item $s_{n-1} \cdots s_2 z^c s_n \cdots s_2 z^{c'}s_2(s_2 \cdots s_{i})$ for $0 \leq c \leq d-1$ and $1 \leq c' \leq d-1$.
\end{itemize}

For the first case, we have $s_n \cdots \underline{s_3s_2s_3}s_4 \cdots s_i = s_n \cdots \underline{s_2} s_3 s_2 \underline{s_4} \cdots s_i$. We shift $\underline{s_2}$ and $\underline{s_4}$ to the left in the previous expression and get $s_2s_n \cdots \underline{s_4s_3s_4}s_2 s_5 \cdots s_i = s_2s_n \cdots \underline{s_3}s_4s_3s_2s_5 \cdots s_i$. We shift $\underline{s_3}$ to the left in the previous expression and get $s_2s_3 s_n \cdots s_2 \underline{s_5} \cdots \underline{s_i}$. We apply the same operations to $\underline{s_5}$, $\cdots$, $\underline{s_i}$ and get\\ $s_2 s_3 \cdots s_{i-1}s_n \cdots s_2$. Hence the term of the first case can be written as follows $s_{n-1} \cdots s_2 z^c s_2 \cdots s_{i-1}s_n \cdots s_2$. Since $i-1 \leq n-1$, it belongs to Span$(S_{n-1}^{*}\Lambda_n)$.

For the second case, we have a term of the form $s_{n-1} \cdots s_2 z^c s_n \cdots s_3s_2z^{c'}(s_2 \cdots s_i)$ for $0 \leq c \leq d-1$ and $0 \leq c' \leq d-1$. If $c' \neq 0$, this term belongs to Span$(S_{n-1}^{*}\Lambda_n)$ and if $c' = 0$, by the computation in the proof of Lemma \ref{LmScholie11}, it also belongs to Span$(S_{n-1}^{*}\Lambda_n)$.

For the third case, we have $s_{n-1} \cdots s_2 z^c s_n \cdots s_2 z^{c'}s_2^2s_3 \cdots s_{i} =$\\
$a s_{n-1} \cdots s_2 z^c s_n \cdots s_2 z^{c'}s_2 \cdots s_{i} + s_{n-1} \cdots s_2 z^c s_n \cdots s_2 z^{c'}s_3 \cdots s_{i}$. The first term is an element of Span$(S_{n-1}^{*}\Lambda_n)$. For the second term, we have $s_{n-1} \cdots s_2 z^c s_n \cdots s_2 \underline{z^{c'}} s_3 \cdots s_{i}\\ = s_{n-1} \cdots s_2 z^c s_n \cdots s_2 s_3 \cdots s_{i} z^{c'}$, where $ s_n \cdots s_2 s_3 \cdots s_{i}$ is already computed in the first case. It is equal to $s_2 \cdots s_{i-1}s_n \cdots s_2$. Hence the term is of the form\\ $s_{n-1} \cdots s_2 z^c s_2 \cdots s_{i-1} s_n \cdots s_2 z^{c'}$ which belongs to Span$(S_{n-1}^{*}\Lambda_n)$.

\end{proof}

\begin{lemma}\label{Lemma88}

If $a_{n-1} = s_{n-1} \cdots s_2 z^k s_2 \cdots s_i$ with $2 \leq i \leq n-1$, $1 \leq k \leq d-1$ and $a_n = s_n \cdots s_{i'}$ with $2 \leq i' \leq n$, then $s_n(a_{n-1}a_n)$ belongs to \emph{Span}$(\Lambda_{n-1}\Lambda_n)$.

\end{lemma}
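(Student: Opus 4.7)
The proof proceeds by transplanting the argument of Lemma~\ref{Lemma8} into the setting of $H(d,1,n)$, with the block $z^k$ playing the role previously played by $t_k$ and with the generator $s_2$ playing the role played by $s_3$. The crucial observation that makes this transfer possible is that in $H(d,1,n)$ the generator $z$ commutes with every $s_j$ for $j \geq 3$, so the block $z^k$ is undisturbed by all braid cascades performed at indices $\geq 3$. Moreover, $s_n$ commutes with every other generator of $H(d,1,n)$ except $s_{n-1}$, so in particular it commutes with $z^k$ and with both inner copies of $s_2$.

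I split into two cases according to the relative order of $i$ and $i'$. In the case $i < i'$, starting from
\[
s_n a_{n-1} a_n \;=\; s_n \, s_{n-1} s_{n-2} \cdots s_2 \, z^k \, s_2 s_3 \cdots s_i \, s_n \, s_{n-1} \cdots s_{i'},
\]
I shift the inner $s_n$ leftward past $s_i, s_{i-1}, \ldots, s_3, s_2, z^k, s_2, s_3, \ldots, s_{n-2}$ (each of which commutes with $s_n$), then apply the braid relation $s_n s_{n-1} s_n = s_{n-1} s_n s_{n-1}$, and cascade downward through the braid relations $s_{m+1} s_m s_{m+1} = s_m s_{m+1} s_m$ for $m = n-2, n-3, \ldots, i'-1$, exactly as in the $i < i'$ part of Lemma~\ref{Lemma8}. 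After this cascade the expression collapses to $s_{n-1} s_{n-2} \cdots s_{i'-1} \, s_n s_{n-1} \cdots s_2 z^k s_2 \cdots s_i$, which is an element of $\Lambda_{n-1}\Lambda_n$.

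In the case $i \geq i'$, the same initial shift and braid moves lead, after an analogous cascade down to index $i+1$, to a subword of the form $s_i s_{i+1} s_i$ that I rearrange using a braid relation, bringing an $s_{i+1}^2$ into position. A single application of the quadratic relation $s_{i+1}^2 = a s_{i+1} + 1$ splits the expression into two summands. The first summand, carrying the coefficient $a$, is directly of the form $a \, s_{n-1} \cdots s_i \cdot s_n \cdots s_2 z^k s_2 \cdots s_{i'} \in \Lambda_{n-1}\Lambda_n$. For the second summand, a further sequence of braid-and-commutation moves paralleling the closing lines of the proof of Lemma~\ref{Lemma8} reduces it to $s_{n-1} \cdots s_{i'} \, s_n \cdots s_2 z^k s_2 \cdots s_{i+1}$, which also lies in $\Lambda_{n-1}\Lambda_n$.

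The main obstacle is purely notational bookkeeping: at each step the fixed central block $s_2 z^k s_2$ must stay anchored in its relative position while the $s_j$'s around it are shuffled by braid and commutation moves. This is guaranteed by the relations $zs_j = s_j z$ for $j \geq 3$ and $s_j s_{j'} = s_{j'} s_j$ for $|j-j'|>1$. Consequently, the argument is a formal translation of the proof of Lemma~\ref{Lemma8}, and no relation of $H(d,1,n)$ beyond those used in Section~\ref{SectionBasisHecke} is needed.
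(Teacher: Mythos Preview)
Your approach is exactly the paper's: the paper's own proof of Lemma~\ref{Lemma88} says only ``We follow exactly the proof of Lemma~\ref{Lemma8}'' and then records the resulting normal forms. Your identification of $z^k$ with the old $t_k$ and of the bottom index $2$ with the old $3$, together with the commutation $zs_j=s_jz$ for $j\geq 3$, is precisely what makes this translation go through.

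There is, however, a genuine slip in your account of the case $i\geq i'$. You assert that after braiding the subword $s_i s_{i+1} s_i$ to $s_{i+1} s_i s_{i+1}$ one obtains an $s_{i+1}^2$, and hence two summands via the quadratic relation. That is not what happens in Lemma~\ref{Lemma8}. After the braid move, the left $s_{i+1}$ commutes past $s_{i-1},\ldots,s_2,z^k,s_2,\ldots,s_{i-1}$ but then meets the $s_i$ inside the descending block $s_n\cdots s_2$; this produces another braid $s_{i+1}s_i s_{i+1}\to s_i s_{i+1} s_i$ there, and the emerging $s_i$ commutes out past $s_{i+2},\ldots,s_n$ to extend the left prefix from $s_{n-1}\cdots s_{i+1}$ to $s_{n-1}\cdots s_i$. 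Iterating this, Lemma~\ref{Lemma8} (case $i\geq i'$) uses only braid and commutation moves and yields the \emph{single} element
\[
s_{n-1}\cdots s_{i'}\; s_n\cdots s_2 z^k s_2\cdots s_{i+1}\ \in\ \Lambda_{n-1}\Lambda_n.
\]
You appear to have conflated this with the case $i\geq i'$ of Lemma~\ref{Lemma2}, where there is no ascending tail and one genuinely encounters an $s_i^2$ and a two-term splitting. Your stated ``second summand'' is in fact the whole answer; the ``first summand'' with coefficient $a$ never arises. (A minor additional point: in the case $i<i'$ your final form is only the outcome when $i'>i+1$; for $i'=i+1$ the paper obtains $s_{n-1}\cdots s_{i+1}\,s_n\cdots s_2 z^k s_2\cdots s_{i+1}$, though of course this too lies in $\Lambda_{n-1}\Lambda_n$.)
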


\begin{proof}

We follow exactly the proof of Lemma \ref{Lemma8}. 

\textit{Suppose $i < i'$}. If $i' = i+1$, we get $s_n(a_{n-1}a_n) = s_{n-1} \cdots s_{i+1}s_n \cdots s_2 z^k s_2 \cdots s_{i+1}$ which belongs to Span$(\Lambda_{n-1}\Lambda_n)$, see lines 9 and 10 of the proof of Lemma \ref{Lemma8}. If $i' > i+1$, we get $s_{n-1} \cdots s_{i'-1}s_n \cdots s_2 z^k s_2 \cdots s_i$ which belongs to Span$(\Lambda_{n-1}\Lambda_n)$, see line 16 of the proof of Lemma \ref{Lemma8}.

\textit{Suppose $i \geq i'$}. We get $s_n(a_{n-1}a_n) = s_{n-1} \cdots s_{i'}s_n \cdots s_2z^ks_2 \cdots s_{i+1}$, see the last line of the proof of Lemma \ref{Lemma8}. Hence $s_n(a_{n-1}a_n)$ belongs to Span$(\Lambda_{n-1}\Lambda_n)$.

\end{proof}

\begin{lemma}\label{Lemma99}

If $a_{n-1} = s_{n-1} \cdots s_2z^ks_2 \cdots s_i$ with $2 \leq i \leq n-1$, $1 \leq k \leq d-1$ and $a_n = s_n \cdots s_2 z^l$ with $0 \leq l \leq d-1$, then $s_n(a_{n-1}a_n)$ belongs to \emph{Span}$(S_{n-1}^{*}\Lambda_n)$.

\end{lemma}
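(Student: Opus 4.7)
The plan is to follow the strategy of Lemma~\ref{Lemma9}, adapting the computations to the generators of $H(d,1,n)$. First, I would view $a_n = (s_n s_{n-1} \cdots s_2)\cdot z^l$ and apply the final identity in the case $i \geq i'$ of Lemma~\ref{Lemma88}, specialised to $i' = 2$, to the prefix $s_n \cdots s_2$ of $a_n$. This gives
\[
s_n(a_{n-1}\,a_n) \;=\; s_{n-1} \cdots s_2 \cdot s_n s_{n-1} \cdots s_2 \cdot z^k \cdot s_2 s_3 \cdots s_{i+1} \cdot z^l.
\]
Since $z$ commutes with $s_j$ for $3 \leq j \leq n$, sliding $z^l$ leftwards past $s_{i+1}, \ldots, s_3$ and then factoring $s_n \cdots s_2 = s_n \cdots s_3 \cdot s_2$ rewrites this as
\[
s_{n-1} \cdots s_2 \cdot s_n s_{n-1} \cdots s_3 \cdot \bigl(s_2 z^k s_2 z^l\bigr) \cdot s_3 \cdots s_{i+1}.
\]

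Next, I would expand the central factor $s_2 z^k s_2 z^l$ by invoking the $n = 2$ case: by Proposition~\ref{PropInductionHypothesisG(d,1,n)} (which relies on Lemmas~\ref{Lemma(s2ts2)^k}, \ref{Lemma(s2ts2)^ks2} and \ref{Lemmas2t^ks2In(s2ts2)^k} for the actual decomposition), the element $s_2 z^k s_2 z^l$ lies in Span$(\Lambda_1 \Lambda_2)$; that is, it is an $R_0$-linear combination of monomials of the shapes $z^c$, $z^c s_2$, $z^c s_2 z^{c'}$, and $z^c s_2 z^{c'} s_2$. Substituting each such monomial in turn and sliding the leading $z^c$ to the left past $s_n s_{n-1} \cdots s_3$ (which it commutes with), each resulting summand becomes $\alpha \cdot s_n s_{n-1} \cdots s_3 \cdot Y \cdot s_3 \cdots s_{i+1}$ with $\alpha \in S_{n-1}^{*}$ and $Y \in \{1,\, s_2,\, s_2 z^{c'},\, s_2 z^{c'} s_2\}$. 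When $Y = 1$ one obtains $s_n \cdots s_4 s_3^2 s_4 \cdots s_{i+1}$, which lies in Span$(S_{n-1}^{*}\Lambda_n)$ by the same braid-quadratic argument as in Lemma~\ref{LmScholie1} (all the relations used there involve only $s_3, \ldots, s_n$ and are valid in $H(d,1,n)$). When $Y = s_2$ or $Y = s_2 z^{c'}$, the middle factor $s_n \cdots s_3 \cdot s_2$ collapses to $s_n \cdots s_2$ and the resulting block $s_n \cdots s_2 \cdot s_3 \cdots s_{i+1}$ (respectively $s_n \cdots s_2 \cdot z^{c'} \cdot s_3 \cdots s_{i+1}$, after pushing $z^{c'}$ rightwards) is rewritten into $\Lambda_n$-form by the same braid manipulations already performed in Case~1 of the proof of Lemma~\ref{Lemma77}, which produce an expression of the form $s_2 s_3 \cdots s_{i} \cdot s_n \cdots s_2$ absorbed by the $S_{n-1}^{*}$ prefix and the $\Lambda_n$ factor.

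The main obstacle I anticipate is the case $Y = s_2 z^{c'} s_2$: an extra trailing $s_2$ now appears, and to bring the block $s_n \cdots s_3 \cdot s_2 z^{c'} s_2 \cdot s_3 \cdots s_{i+1}$ into Span$(S_{n-1}^{*}\Lambda_n)$ one must push the rightmost $s_2$ through $s_3 \cdots s_{i+1}$ via a sequence of braid relations $s_j s_{j+1} s_j = s_{j+1} s_j s_{j+1}$ together with the commutations $s_2 s_j = s_j s_2$ for $j \geq 4$, generating auxiliary summands similar to those in the case $i \geq i'$ of the proof of Lemma~\ref{Lemma88} and in Case~3 of the proof of Lemma~\ref{Lemma77}. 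Each of these auxiliary summands is either already in $\Lambda_n$ or is handled by one more application of the preceding cases. Since every step only uses the defining relations of $H(d,1,n)$ that are available in the proof of Lemma~\ref{Lemma9}, the argument closes in the same style.
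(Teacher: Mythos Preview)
Your approach is essentially the same as the paper's: reduce via Lemma~\ref{Lemma88} with $i'=2$, slide $z^l$ left, expand $s_2 z^k s_2 z^l$ over $\Lambda_1\Lambda_2$ using Proposition~\ref{PropInductionHypothesisG(d,1,n)}, and treat the resulting shapes separately. The only difference is that you misjudge where the difficulty lies. The case $Y = s_2 z^{c'} s_2$ you flag as the ``main obstacle'' is in fact the trivial one: the block
\[
s_n \cdots s_3 \cdot s_2 z^{c'} s_2 \cdot s_3 \cdots s_{i+1} \;=\; s_n \cdots s_2\, z^{c'}\, s_2 \cdots s_{i+1}
\]
is already literally an element of $\Lambda_n$ (since $1 \le c' \le d-1$ and $2 \le i+1 \le n$), so no pushing of $s_2$ through $s_3 \cdots s_{i+1}$ is needed. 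The paper disposes of this case in one line. Conversely, the cases you dismiss quickly ($Y=1$ and $Y = s_2,\, s_2 z^{c'}$) are the ones that actually require invoking Lemma~\ref{LmScholie11} and the Case~1 computation of Lemma~\ref{Lemma77}, exactly as you outline.
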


\begin{proof}

According to the computations in the proof of the previous lemma, we have $s_n(a_{n-1}a_n) = s_{n-1} \cdots s_2 s_n \cdots s_2 z^k s_2 \cdots s_{i+1} z^l$. We shift $z^l$ to the left and get\\ $s_n(a_{n-1}a_n) = s_{n-1} \cdots s_2 s_n \cdots s_3(s_2 z^k s_2)z^l s_3 \cdots s_{i+1}$. If we replace $(s_2z^ks_2)z^l$ by its decomposition over $\Lambda_1\Lambda_2$ (this is the case $n=2$ of Theorem \ref{TheoremNewBasisH(d,1,n)}, see Proposition \ref{PropInductionHypothesisG(d,1,n)}), we get terms of the three following forms:
\begin{itemize}
\item $s_{n-1} \cdots s_2 s_n \cdots s_3 z^c s_3 \cdots s_{i+1}$ with $0 \leq c \leq d-1$,
\item $s_{n-1} \cdots s_2 s_n \cdots s_3 z^c s_2 z^{c'}s_3 \cdots s_{i+1}$ with $0 \leq c \leq d-1$ and $0 \leq c' \leq d-1$,
\item $s_{n-1} \cdots s_2 s_n \cdots s_3 z^cs_2z^{c'}s_2s_3 \cdots s_{i+1}$ with $0 \leq c \leq d-1$ and $1 \leq c' \leq d-1$.
\end{itemize}

The first term is equal to $s_{n-1} \cdots s_2 z^c s_n \cdots s_4s_3^2s_4 \cdots s_{i+1}$ with $2 \leq i \leq n-1$. Thus, by the proof of Lemma \ref{LmScholie11}, it belongs to Span$(S_{n-1}^{*}\Lambda_n)$.

The second term is equal to $s_{n-1} \cdots s_2 z^c s_n \cdots s_3s_2s_3 \cdots s_{i+1}z^{c'}$. We have\\ $s_n \cdots s_3s_2s_3 \cdots s_{i+1}$ is equal to $s_2 \cdots s_i s_n \cdots s_2$. This is done in the first case of the proof of Lemma \ref{Lemma77}. Hence we get $s_{n-1} \cdots s_2 z^c s_2 \cdots s_i s_n \cdots s_2 z^{c'}$ which belongs to Span$(S_{n-1}^{*}\Lambda_n)$.

The third term is equal to $s_{n-1} \cdots s_2 z^c s_n \cdots s_2z^{c'}s_2 \cdots s_{i+1}$. Hence it belongs to Span$(S_{n-1}^{*}\Lambda_n)$.

\end{proof}

\begin{lemma}\label{Lemma1010}

If $a_{n-1} = s_{n-1} \cdots s_2 z^k s_2 \cdots s_i$ with $1 \leq k \leq d-1$, $2 \leq i \leq n-1$ and $a_n = s_n \cdots s_2 z^ls_2 \cdots s_{i'}$ with $1 \leq l \leq d-1$, $2 \leq i' \leq n$, then $s_n(a_{n-1}a_n)$ belongs to \emph{Span}$(S_{n-1}^{*}\Lambda_n)$.

\end{lemma}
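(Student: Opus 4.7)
The plan is to model the proof on that of Lemma \ref{Lemma10}, exploiting the close parallel between the computations in Lemma \ref{Lemma99} (for $G(d,1,n)$) and those in Lemma \ref{Lemma9} (for $G(e,e,n)$). First I would rerun the opening reductions of Lemma \ref{Lemma99} on the present $a_{n-1}a_n$, noting that the trailing factor $s_2 \cdots s_{i'}$ is simply carried along on the right throughout. Concretely, using Lemma \ref{Lemma88} one rewrites $s_n(a_{n-1}a_n)$ in the form $s_{n-1} \cdots s_2 \, s_n \cdots s_2 z^k s_2 \cdots s_{i+1} \, z^l s_2 \cdots s_{i'}$, then shifts $z^l$ to the left past $s_3,\dots,s_{i+1}$ (they commute with $z$), arriving at an expression containing the product $(s_2 z^k s_2)z^l$. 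By Proposition \ref{PropInductionHypothesisG(d,1,n)} this product decomposes as an $R_0$-linear combination of $\Lambda_1 \Lambda_2$.

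That decomposition produces three types of summands, each equipped with a trailing factor of the form $s_3 \cdots s_{i+1} s_2 \cdots s_{i'}$: type (i) $s_{n-1} \cdots s_2 \, s_n \cdots s_3 \, z^c \, s_3 \cdots s_{i+1} s_2 \cdots s_{i'}$, type (ii) $s_{n-1} \cdots s_2 \, s_n \cdots s_3 \, z^c s_2 z^{c'} \, s_3 \cdots s_{i+1} s_2 \cdots s_{i'}$, and type (iii) $s_{n-1} \cdots s_2 \, s_n \cdots s_3 \, z^c s_2 z^{c'} s_2 \, s_3 \cdots s_{i+1} s_2 \cdots s_{i'}$, with $0 \leq c \leq d-1$ and $1 \leq c' \leq d-1$. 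Type (i) is dispatched by shifting $z^c$ leftward past the commuting block $s_{n-1} \cdots s_3$, producing an expression containing $s_n \cdots s_4 s_3^2 s_4 \cdots s_{i+1}$ followed by $s_2 \cdots s_{i'}$; this is handled by Lemma \ref{LmScholie11} combined with the same splitting $(i' \leq i$ versus $i' > i)$ used in Lemma \ref{Lemma44}. Type (ii) is handled by commuting $z^{c'}$ to the right past $s_3 \cdots s_{i+1}$ and then applying the same braid manipulations already performed in case 2 of the proof of Lemma \ref{Lemma77}, which collapse the middle block into a standard form in $\mathrm{Span}(S_{n-1}^{*}\Lambda_n)$.

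Type (iii) is the main obstacle and will take most of the work, because the configuration $s_2 z^{c'} s_2 \cdot s_3 \cdots s_{i+1} \cdot s_2 \cdots s_{i'}$ cannot be simplified by elementary commutations. Following the template of case 2 in the proof of Lemma \ref{Lemma10}, I would split into the two subcases $i' \leq i$ and $i' > i$. In the first subcase, one pulls the initial $s_2$ of $s_2 \cdots s_{i'}$ leftward through $s_3 \cdots s_{i+1}$ using repeated applications of the braid relation $s_p s_{p+1} s_p = s_{p+1} s_p s_{p+1}$, telescoping the expression into a standard form lying in $\mathrm{Span}(\Lambda_{n-1}\Lambda_n)$. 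In the subcase $i' > i$, one uses the quadratic relation $s_{i+1}^2 = a s_{i+1} + 1$ at the moment when the migrating $s_{i+1}$ meets an existing $s_{i+1}$, producing two pieces that both fall into $\mathrm{Span}(S_{n-1}^{*}\Lambda_n)$ after a further round of braid manipulations entirely analogous to those at the end of the proof of Lemma \ref{Lemma10}.

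Throughout, I would keep track of powers of $z$: whenever a power $z^m$ with $m \geq d$ appears, the relation $z^d - b_1 z^{d-1} - \cdots - b_{d-1} z - 1 = 0$ reduces it to a linear combination of $1,z,\dots,z^{d-1}$, so the output stays in $\mathrm{Span}(S_{n-1}^{*}\Lambda_n)$. The main technical difficulty I anticipate is the subcase $i' > i$ of type (iii), where iterating the braid-plus-quadratic manipulation across many indices demands careful bookkeeping, and where one must also check that the prefactor $s_{n-1} \cdots s_2 z^{(\cdot)}$ in front never obstructs any of the commutations used further to the right.
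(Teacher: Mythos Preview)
Your proposal is correct and follows essentially the same route as the paper: reduce via Lemma~\ref{Lemma99} to the three types of summands, then handle type~(i) through Lemma~\ref{LmScholie11} plus the Lemma~\ref{Lemma44}/\ref{Lemma4}-style splitting, and type~(iii) by the $i'\leq i$ versus $i'>i$ analysis of Case~2 in Lemma~\ref{Lemma10}. One small clarification: for type~(ii), after commuting $z^{c'}$ rightward you should first use the braid identity $s_n\cdots s_3 s_2 s_3 \cdots s_{i+1} = s_2\cdots s_i\, s_n\cdots s_2$ (this is the computation from the \emph{first} case of Lemma~\ref{Lemma77}, not the second), which brings the term into the form $s_{n-1}\cdots s_2 z^c\, s_2\cdots s_i\, s_n\cdots s_2 z^{c'}(s_2\cdots s_{i'})$; then the reasoning of the second case of Lemma~\ref{Lemma77} applies to finish.
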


\begin{proof}

According to the proof of the previous lemma, we have to prove that the following three terms belong to Span$(S_{n-1}^{*}\Lambda_n)$:
\begin{itemize}
\item $s_{n-1} \cdots s_2 z^cs_n \cdots s_4s_3^2s_4 \cdots s_{i+1}(s_2 \cdots s_{i'})$ for $0 \leq c \leq d-1$,
\item $s_{n-1} \cdots s_2 z^cs_2 \cdots s_i s_n \cdots s_2 z^{c'}(s_2 \cdots s_{i'})$ for $0 \leq c \leq d-1$ and $0 \leq c' \leq d-1$,
\item $s_{n-1} \cdots s_2 z^c s_n \cdots s_2 z^{c'} s_2 \cdots s_{i+1}(s_2 \cdots s_{i'})$ for $0 \leq c \leq d-1$ and $1 \leq c' \leq d-1$.
\end{itemize}

The first case is similar to Case 4 in the proof of Lemma \ref{Lemma10}.

For the second term, if $c' \neq 0$, it belongs to Span$(S_{n-1}^{*}\Lambda_n)$ and if $c' = 0$, by the computation in the proof of Lemma \ref{LmScholie11}, it also belongs to Span$(S_{n-1}^{*}\Lambda_n)$.

For the third term, we apply the same technique as in Case 2 of the proof of Lemma \ref{Lemma10}. We get that $s_n \cdots s_2 z^{c'}s_2 \cdots s_{i+1} (s_2 \cdots s_{i'})$ is equal to:\\
If $i' \leq i$, it is equal to $s_2s_3 \cdots s_{i'}s_n \cdots s_2 z^{c'}s_2 \cdots s_{i+1}$.\\
If $i' > i$, it is equal to $s_2s_3 \cdots s_{i'-1}s_n \cdots s_2 z^{c'}s_2 \cdots s_i$.\\
It follows that $s_{n-1} \cdots s_2 z^c s_n \cdots s_2 z^{c'} s_2 \cdots s_{i+1}(s_2 \cdots s_{i'})$ for $0 \leq c \leq d-1$ and $1 \leq c' \leq d-1$ belongs to Span$(S_{n-1}^{*}\Lambda_n)$.
 
\end{proof}

\begin{remark}

We remark that for every $d$ and $n$ at least equal to $2$, our basis never coincides with the Ariki-Koike basis \cite{ArikiKoikeHecke} as illustrated by the following example. Consider the element $s_2zs_2^2 = s_2zs_2.s_2$ which belongs to the Ariki-Koike basis. In our basis, it is equal to the linear combination $as_2zs_2 + s_2z$, where $s_2zs_2$ and $s_2z$ are two distinct elements of our basis.

\end{remark}

\section*{Acknowledgements}

Most of the results of this paper have been discovered during my PhD at Universit\'e de Caen Normandie. I would like to thank my PhD supervisors Eddy Godelle and Ivan Marin for the numerous stimulating discussions. I am also grateful to Barbara Baumeister for her helpful comments and for giving me the time to write this paper at the beginning of my postdoc (DFG project: BA 2200/5-1) at Universität Bielefeld.

\bibliographystyle{plain}
\bibliography{BasesHeckeV1}

\end{document}